\newcommand{\ind}{\mathbbm{1}}
\newcommand{\INT}[2]{\int\limits_{#1}^{#2} } 
\renewcommand{\epsilon}{\varepsilon}
\newcommand{\eps}{\epsilon}
\renewcommand{\theta}{\vartheta}
\renewcommand{\phi}{\varphi}
\renewcommand{\leq}{\leqslant}
\renewcommand{\geq}{\geqslant}
\newcommand{\D}{\, \mathrm{d}}
\newcommand{\ton}{\overset{}{\underset{n\to\infty}\longrightarrow}}
\newcommand{\bD}{\mathbb{D}}
\newcommand{\R}{\mathbb{R}}
\newcommand{\CN}{\mathbb{C}} 
\newcommand{\I}{\mathrm{i}}
\renewcommand{\d}{\mathrm{d}}
\newcommand{\e}{\mathrm{e}}
\newcommand{\EXP}[1]{\exp\left(#1\right)}
\DeclareMathOperator{\VAR}{Var}
\DeclareMathOperator{\RE}{Re}
\renewcommand{\l}{\left}
\renewcommand{\r}{\right}
\newcommand{\Var}{\mathop{\mathrm{Var}}\nolimits}
\renewcommand{\Re}{\operatorname{Re}}
\renewcommand{\Im}{\operatorname{Im}}
\newcommand{\todistr}{\overset{d}{\underset{n\to\infty}\longrightarrow}}
\newcommand{\todistrnon}{\overset{d}{\underset{}\longrightarrow}}
\newcommand{\toweaka}{\overset{w}{\underset{}\longrightarrow}}
\newcommand{\dd}{{\rm d}}
\theoremstyle{plain}
\newtheorem{theorem}{Theorem}[section]
\newtheorem{lemma}[theorem]{Lemma}
\newtheorem{corollary}[theorem]{Corollary}
\newtheorem{proposition}[theorem]{Proposition}
\theoremstyle{definition}
\newtheorem{example}[theorem]{Example}
\theoremstyle{remark}
\renewcommand{\P}[1]{\mathbb{P}\left[#1\right]} 
\newcommand{\N}{\mathbb{N}}
\newcommand{\E}{\mathbb{E}}
\newcommand{\EW}[1]{\E\left[#1\right]}
\DeclareMathOperator{\IM}{Im}
\DeclareMathOperator{\COV}{Cov}
\begin{document}

\author{Hendrik Flasche}
\address{Institut f\"ur Mathematische Stochastik,
Westf\"alische Wilhelms-Universit\"at M\"unster,
Orl\'eans--Ring 10,
48149 M\"unster, Germany}
\email{hendrik.flasche@uni-muenster.de}

\author{Zakhar Kabluchko}
\address{Institut f\"ur Mathematische Stochastik,
Westf\"alische Wilhelms-Universit\"at M\"unster,
Orl\'eans--Ring 10,
48149 M\"unster, Germany}
\email{zakhar.kabluchko@uni-muenster.de}

\title[Real Zeros of Random Taylor Series]{Expected Number of Real Zeros of Random Taylor Series}

\keywords{Random polynomials, random Taylor series, real zeroes, weak convergence, random analytic functions, functional limit theorem}

\subjclass[2010]{Primary: 	30C15, 26C10; secondary: 60F99, 60F17, 	60F05, 60G15}

\begin{abstract}
Let $\xi_0,\xi_1,\ldots$ be i.i.d.\ random variables with zero mean and unit variance. Consider a random Taylor series of the form
$
f(z)=\sum_{k=0}^\infty \xi_k c_k z^k,
$
where $c_0,c_1,\ldots$ is a real sequence such that $c_n^2$ is regularly varying with index $\gamma-1$, where $\gamma>0$. We prove that
$$
\mathbb E N[0,1-\epsilon] \sim \frac{\sqrt{\gamma}}{2\pi} |\log \eps| \;\;\; \text{ as } \;\;\;  \varepsilon\downarrow 0,
$$
where  $N[0,r]$ denotes the number of real zeroes of $f$ in the interval $[0,r]$.
\end{abstract}

\maketitle

\section{Introduction and main result}
\subsection{Introduction}
Let $\xi_0,\xi_1,\ldots$ be independent, identically distributed random variables with real values. Consider random polynomials of the following form:
$$
P_n(z) = \sum_{k=0}^n \xi_k z^k, \quad z\in\CN.
$$
\citet{bloch_polya} and~\citet{littlewood_offord1,littlewood_offord2,littlewood_offord3} obtained first estimates on the number of real zeroes of $P_n$.
In the case when the $\xi_k$'s are standard normal, Kac~\cite{kac_explicit} computed explicitly  the expected number of real zeroes of $P_n$ and proved that asymptotically it behaves like $\frac 2 \pi (1+o(1)) \log n$, as $n\to\infty$. The same asymptotics was shown to hold for some other classes of distributions by~\citet{kac_asympt}, \citet{erdoes_offord} and~\citet{stevens}, but it was only in 1971 when Ibragimov and Maslova~\cite{ibragimov_maslova1} proved it when the $\xi_k$'s have arbitrary zero mean distribution from the domain of attraction of the normal law.
The case when the expectation of the $\xi_k$'s is non-zero was considered in~\cite{ibragimov_maslova2},  the asymptotics of the variance and the central limit theorem were obtained in~\cite{maslova_variance} and~\cite{maslova_distribution}, respectively. Under additional assumptions on the distribution of the $\xi_k$'s, \citet{do_nguyen_vu_repulsion}, see also~\cite{nguyen_nguyen_vu}, proved that the expected number of real roots is $\frac 2 \pi \log n + C + o(1)$. Assuming only that the $\xi_k$'s are non-degenerate and exchangeable, Ken S\"oze proved the upper bound $C\log n$ on the expected number of real roots, thus confirming a conjecture of L.\ Shepp. Regarding the complex zeroes, Ibragimov and Zaporozhets~\cite{ibragimov_zaporozhets} proved that  their empirical measure weakly converges to the uniform distribution on the unit circle a.s.\ if and only if $\E \log_+ |\xi_1|$ is finite. The expected number of real roots of random trigonometric polynomials whose coefficients are i.i.d.\ random variables with finite second moment was computed asymptotically  by \citet{flasche}.
Recently, new methods coming from random matrix theory were introduced into the theory of random polynomials by \citet{tao_vu} and developed further by~\citet{do_nguyen_vu_arbitrary}.

In the present paper, we shall be interested in random Taylor series of the form $\sum_{k=0}^\infty \xi_k z^k$, or, more generally,  $\sum_{k=0}^\infty \xi_k c_k z^k$ under a regular variation assumption on the sequence of weights $c_k$ which ensures that the convergence radius of the series is $1$, with probability $1$.  The expected number of real zeroes of such Taylor series is infinite.
Our aim is to describe the speed of clustering of real zeroes of $f$ near the point $1$.

\subsection{Main result}
Let $(\xi_k)_{k\in\N_0}$ be a sequence of independent identically distributed real-valued random variables with
\begin{equation*}
	\E [\xi_0]=0 \quad \text{and} \quad \E[\xi_0^2]=1.
\end{equation*}
Throughout, we assume that the random variable $\xi_0$ is non-degenerate, that is $\P{\xi_0=0}<1$.  Let $(c_k)_{k\in\N_0}$ be a deterministic sequence of real numbers such that
\begin{equation}\label{eq:c_k_reg_var}
c_n^2= \frac{n^{\gamma-1}}{\Gamma(\gamma)}L(n)
\end{equation}
for some $\gamma>0$ and some function $L$ that varies slowly\footnote{A function $L(t)$, defined for $t>0$, is called slowly varying if $L(t)>0$ for sufficiently large $t$ and  $\lim_{t\to +\infty} L(\lambda t)/L(t) = 1$ for all $\lambda>0$. We may and shall assume that $c_0=0$, which does not restrict generality.} at $+\infty$. Here, $\Gamma$ denotes the Gamma function and the term $\Gamma(\gamma)$ is included for convenience.
Our main object of interest is the random Taylor series $f$ given by
\begin{equation*}
	f(z)=\sum_{k=0}^\infty \xi_k c_k z^k.
\end{equation*}
Under our assumptions on the $c_k$'s, $f(z)$ converges on the open unit disk $\bD= \{z\in\CN\colon |z|<1\}$ and
defines an analytic function there, with probability $1$. The number of zeroes of $f$ in any disk of the form $\bD_r = \{z\in \CN\colon |z|<r\}$, where $r<1$, is finite, however the zeroes cluster near the boundary of $\bD$. Let $N[a,b]$, respectively $N[a,b)$, denote the number of real zeroes of $f$ in the interval $[a,b]\subseteq (-1,1)$, respectively $[a,b)\subset (-1,1)$.
The following theorem is our main result.
\begin{theorem}\label{thm:mainresult}
Under the above assumptions, we have
\begin{equation*}
\lim_{r\uparrow 1}\frac{\E N[0,r]}{-\log(1-r)}=\frac{\sqrt{\gamma}}{2\pi}.
\end{equation*}
\end{theorem}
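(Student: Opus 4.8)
The plan is to move the problem to a stationary scale in a neighbourhood of the point $1$, to prove a functional limit theorem there identifying the limit as a specific stationary Gaussian process, and to read off the constant $\frac{\sqrt\gamma}{2\pi}$ from a Kac--Rice computation for that limit. In all cases (not only the Gaussian one) the second-order structure of $f$ is $\E[f(s)f(t)]=h(st)$ with $h(x)=\sum_{k\geq0}c_k^2x^k$. The first step is to record the behaviour of $h$ near $1$: since $c_k^2$ is regularly varying with index $\gamma-1$, a Tauberian theorem of Karamata gives $h(x)\sim(1-x)^{-\gamma}L\big(\tfrac1{1-x}\big)$ as $x\uparrow1$, and, applying the same theorem to $\sum_k kc_k^2x^k$ and $\sum_k k(k-1)c_k^2x^k$, also $h'(x)\sim\gamma(1-x)^{-\gamma-1}L\big(\tfrac1{1-x}\big)$ and $h''(x)\sim\gamma(\gamma+1)(1-x)^{-\gamma-2}L\big(\tfrac1{1-x}\big)$.

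Next comes the change of variables. The map $v\mapsto t=e^{-e^{-v}}$ is an increasing bijection from $\R$ onto $(0,1)$, so $N[0,r]$ equals the number of zeros of the process $Z(v):=f(e^{-e^{-v}})$ on $(-\infty,V]$, where $V=-\log(-\log r)=-\log(1-r)+o(1)$ as $r\uparrow1$. Using the asymptotics of $h,h',h''$ one checks that, as $w\to\infty$, the shifted process $Z(\,\cdot\,+w)$ together with its derivative acquires in the limit the covariance structure of the stationary centered Gaussian process $G$ with correlation function $\kappa(v)=\cosh^{-\gamma}(v/2)$; concretely $\Corr\big(Z(v+w),Z(v'+w)\big)\to\kappa(v-v')$ uniformly on compacts, and likewise for the joint law with first derivatives. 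Since $\kappa$ is analytic with $\kappa(0)=1$ and $\kappa''(0)=-\gamma/4$, the Kac--Rice formula for stationary Gaussian processes gives $\E[\#\{\text{zeros of }G\text{ in }[a,a+T]\}]=\tfrac{T}{\pi}\sqrt{-\kappa''(0)}=\tfrac{\sqrt\gamma}{2\pi}T$ for every $T>0$ and $a\in\R$.

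To transfer this to $f$ I would prove that $Z(\,\cdot\,+w)\to G$ in distribution in $C^1_{\mathrm{loc}}(\R)$ as $w\to\infty$. Finite-dimensional convergence is a Lindeberg central limit theorem: $Z(v+w)=\sum_k\xi_kc_ke^{-ke^{-(v+w)}}$ is a weighted sum of i.i.d.\ random variables whose individual weights become negligible relative to the total variance (the regular variation of $c_k^2$ together with $\E\xi_0^2=1$ is exactly what makes the Lindeberg condition hold), and the same applies to finitely many linear combinations of $Z$ and $Z'$; tightness in $C^1_{\mathrm{loc}}$ follows from Kolmogorov's continuity criterion applied with the $w$-uniform second-moment increment bounds provided by the asymptotics of $h'$ and $h''$. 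Because $G$ is a nondegenerate analytic stationary Gaussian process, almost surely it has finitely many zeros in $[a,a+T]$, none at the endpoints and none of multiplicity $\geq2$, so the functional $g\mapsto\#\{\text{zeros of }g\text{ in }[a,a+T]\}$ is a.s.\ continuous at $G$ in $C^1[a,a+T]$ and the number of zeros of $Z(\,\cdot\,+w)$ on a block converges in distribution to that of $G$. Granting a uniform-integrability bound, this gives $\E[\#\{\text{zeros of }Z\text{ in }[w,w+T]\}]\to\frac{\sqrt\gamma}{2\pi}T$ as $w\to\infty$; decomposing $(-\infty,V]$ into a fixed initial piece $(-\infty,V_0]$ with finite expected zero count and $\lfloor(V-V_0)/T\rfloor$ consecutive blocks of length $T$, a Cesàro argument then yields $\E N[0,r]=\E[\#\{\text{zeros of }Z\text{ in }(-\infty,V]\}]\sim\frac{\sqrt\gamma}{2\pi}V\sim\frac{\sqrt\gamma}{2\pi}\big(-\log(1-r)\big)$, first letting $r\uparrow1$ and then $T\to\infty$.

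I expect the main obstacle to be precisely the a priori estimate needed to turn this distributional convergence into convergence of expectations: one needs $\E N[0,1-\delta]<\infty$ for each fixed $\delta>0$ and, more substantially, a bound on $\E[\#\{\text{zeros of }Z\text{ in }[w,w+T]\}]$ that is uniform in $w$ (equivalently, on the expected number of real zeros of $f$ in a fixed hyperbolic window approaching $1$), under the sole assumption of two finite moments. In the Gaussian case this is immediate from Kac--Rice, but in general I would obtain it by an Ibragimov--Maslova-type argument: partition the window into short subintervals and observe that a sign change of $f$ on such a subinterval forces either $f$ to be atypically small at a reference point---an event controlled by anti-concentration of normalized sums of i.i.d.\ random variables, uniformly via Berry--Esseen and the non-degeneracy of $\xi_0$---or $f$ to oscillate atypically much there, an event controlled by Markov's inequality applied to the variances of increments of $f$ and $f'$, which are again governed by the Tauberian asymptotics of $h$. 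Combining this bound with the weak convergence and the value $\frac{\sqrt\gamma}{2\pi}$ for $G$ closes the argument.
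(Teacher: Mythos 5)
Your proposal follows essentially the same route as the paper: a change of variables moving the problem into a stationary regime near $1$, a functional limit theorem identifying the limit as the centered stationary Gaussian process with correlation $\cosh^{-\gamma}(\cdot/2)$, the Rice formula to extract the constant $\sqrt\gamma/(2\pi)$, and a Ces\`aro plus uniform-integrability argument to pass from weak convergence of block zero counts to convergence of their expectations, with an Ibragimov--Maslova-type anti-concentration bound singled out as the hard part. A few implementation choices differ and are worth noting. For the functional limit theorem, the paper works in a Banach space $\mathcal H_\R(Q_R)$ of functions analytic on a complex rectangle around $[q,1]$, so tightness requires only a single uniform second-moment bound (the criterion for random analytic functions), whereas your $C^1_{\mathrm{loc}}$ route via Kolmogorov also needs increment bounds for $Z'$; both are feasible, but the analytic-function route is noticeably lighter. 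For ruling out that $f$ has abnormally many zeros in a block, the paper uses Jensen's formula (Lemma~\ref{lemma:Nngreaterlambda}); this point is not merely routine because, unlike a degree-$n$ polynomial, the Taylor series has no deterministic bound on its number of zeros in an interval, and a pure oscillation/Markov estimate on increments of $f$ and $f'$ is weaker than what Jensen gives. Finally, for the anti-concentration bound (Lemma~\ref{lemma:abschaetzungdmeins}) your sketch invokes Berry--Esseen, which requires a finite third moment of $\xi_0$; the paper only assumes $\E \xi_0^2<\infty$ and instead uses an Esseen-type characteristic-function/smoothing argument, and the non-flat weights $a_{n,k}$ force a nontrivial analysis of the monotone rearrangement of the weight sequence that has no analogue in the classical Ibragimov--Maslova setting with constant weights. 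So your plan is sound in outline and leads to the same constant, but the part you flag as the main obstacle is indeed where all the work lies, and the specific tools you name (Kolmogorov tightness, increment-variance bounds, Berry--Esseen) would either need to be replaced or reinforced to work under the paper's hypotheses.
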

\begin{example}
In the case when $c_k = 1$ for all $k\in\N_0$ we have $\gamma=1$ and Theorem~\ref{thm:mainresult} yields the following asymptotics for the number of real zeroes of the Taylor series $f(z) = \sum_{k=0}^\infty \xi_k z^k$ in the interval $[0,r]$:
$$
\E N[0,r] \sim \frac{1}{2\pi} \log \frac{1}{1-r},\quad  \text{ as } r\uparrow 1.
$$
Here, $a(r)\sim b(r)$ as $r\to r_0$ means that $\lim_{r\to r_0} a(r)/b(r) = 1$.
For the number of critical points, i.e.\ the zeroes of the derivative $f'(z) = \sum_{k=1}^\infty  k \xi_k z^{k-1}$, Theorem~\ref{thm:mainresult}  with $\gamma=2$ yields
$$
\E N_{\text{crit}}[0,r] \sim \frac{1}{\sqrt 2 \,\pi} \log \frac{1}{1-r},\quad  \text{ as } r\uparrow 1.
$$
\end{example}

It seems that only very little is known about real zeroes of random Taylor series.  One  exception is the paper of~\citet[Section~2.5]{do_nguyen_vu_arbitrary} who proved a local universality result for real and complex zeroes.  Our approach is a development of the method of \citet{ibragimov_maslova1} and is independent of the method of~\cite{do_nguyen_vu_arbitrary}. One of the new features compared both to~\cite{ibragimov_maslova1} and~\cite{do_nguyen_vu_arbitrary}  is the use of functional limit theorems. The scope of our method is not restricted to random Taylor series. For example, random trigonometric polynomials were treated in~\cite{flasche}, \cite{iksanov2016}, \cite{angst_etal_bivariat}.
One of the conditions required by~\citet{do_nguyen_vu_arbitrary} was the finiteness of the $(2+\delta)$-th moment of $\xi_0$. We require only the finiteness of the second moment, but even this requirement is not critical.
In fact, the second (and most difficult) part of our proof applies with minimal modifications to the case when $\xi_0$ belongs to an $\alpha$-stable domain of attraction. The first part of the proof (the functional limit theorem) also can be adapted to the $\alpha$-stable case (leading to a different asymptotics for the number of real roots), but we refrain from doing it here.  As in~\cite{ibragimov_maslova1}, Theorem~\ref{thm:mainresult} continues to hold without changes under  the assumption that $\xi_0$ is in the domain of attraction of the normal law. We refrain from giving the proof in this level of generality because it leads to more complicated notation without requiring new ideas.

\vspace*{2mm}
\noindent
\emph{Notation.}   In the following, $C>0$ (respectively, $c>0$) denotes a sufficiently large  (respectively, small) constant that does not depend on $n$ and may change from line to line. Most statements hold for sufficiently large $n\geq n_0$ only, where the number $n_0$ also can change from line to line.   The floor and the ceiling functions of $x$ are denoted by $\lfloor x \rfloor$ and $\lceil x \rceil$, respectively.

\section{Method of proof of Theorem \ref{thm:mainresult}}
\subsection{The main lemmas}
The main body of the paper will be devoted to the proof of the following crucial lemmas that easily imply Theorem \ref{thm:mainresult}:
\begin{lemma}\label{lem:num_zeroes_conv_distr}
Fix some $0<q<1$. As $n\to\infty$, the random variable $N\l[1-q^n,1-q^{n+1}\r)$ converges in distribution to certain random variable with values in $\{0,1,\ldots\}$ and expectation
$$
-\frac{\sqrt{\gamma}\log q}{2\pi}.
$$
\end{lemma}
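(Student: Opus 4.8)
The plan is to rescale the interval $[1-q^n,1-q^{n+1})$ to a fixed one, prove a functional limit theorem for the correspondingly rescaled random function, push the zero count forward to the limit by a continuous--mapping argument, and finally evaluate the expected number of zeros of the limit by the Kac--Rice formula. For the \textbf{reduction}, note that $u\mapsto 1-q^{n+u}$ is an increasing bijection of $[0,1)$ onto $[1-q^n,1-q^{n+1})$, so, writing $a_n:=(\Var f(1-q^n))^{1/2}>0$ and $X_n(u):=a_n^{-1}f(1-q^{n+u})$, we have $N[1-q^n,1-q^{n+1})=\#\{u\in[0,1):X_n(u)=0\}$. For $n$ large, $1-q^{n+z}$ stays inside the unit disc when $z$ lies in a fixed complex neighbourhood $U$ of $[0,1]$, so $X_n$ extends to a random analytic function on $U$. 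The key step will be the functional limit theorem: as $n\to\infty$, $X_n\to G$ locally uniformly on $U$ (in particular in $C^1[0,1]$), where $G$ is a centred Gaussian analytic function whose covariance on the real axis is $\lim_n\Cov(X_n(u_1),X_n(u_2))$. By Karamata's Tauberian theorem, $\sum_k\frac{k^{\gamma-1}}{\Gamma(\gamma)}L(k)z^{k}\sim(1-z)^{-\gamma}L(1/(1-z))$ as $z\uparrow1$; combined with the uniform convergence theorem for slowly varying functions (to control the $u$--dependence of the slowly varying factors), this gives
$$\lim_{n\to\infty}\Cov(X_n(u_1),X_n(u_2))=\Big(\tfrac{2}{q^{u_1}+q^{u_2}}\Big)^{\gamma}=q^{-\gamma(u_1+u_2)/2}\Big(\cosh\tfrac{(u_1-u_2)\log(1/q)}{2}\Big)^{-\gamma}=:K(u_1,u_2).$$
Thus $\bar G(u):=q^{\gamma u/2}G(u)$ is a \emph{stationary}, unit--variance, centred Gaussian process with covariance function $\rho(h)=(\cosh(h\log(1/q)/2))^{-\gamma}$, and since $\rho$ is smooth, $\bar G$ and $G$ have $C^1$ sample paths.

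To establish \textbf{the functional limit theorem} I would first prove convergence of finite--dimensional distributions by the Lindeberg central limit theorem for triangular arrays: $(X_n(u_1),\dots,X_n(u_m))$ is a sum of independent centred vectors $\xi_k\,a_n^{-1}c_k\big((1-q^{n+u_j})^k\big)_{j}$ whose total covariance converges to $K$, while $a_n^{-2}\max_k c_k^2(1-q^{n+u})^{2k}\to0$ by a Laplace--type estimate for the sequence $k\mapsto c_k^2x^{2k}$ (using the convention $c_0=0$ to cover $\gamma\le1$), so the largest summand is asymptotically negligible. To upgrade to locally uniform convergence of the analytic continuations I would run a normal--family argument: a bound $\sup_n\E\sup_{z\in K}|X_n(z)|^2<\infty$ on compacta $K\subset U$ — obtained from the Tauberian estimate for $a_n^{-2}\sum_k c_k^2|1-q^{n+z}|^{2k}$ together with a subharmonicity argument — gives tightness in the space of analytic functions, and with the convergent finite--dimensional distributions this yields $X_n\to G$ weakly, locally uniformly; in particular $(X_n,X_n')\to(G,G')$ in $C[0,1]^2$. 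This step, and especially the regular--variation estimates for the derivatives of $f$ (whose coefficients $kc_k$ are again regularly varying, now of index $\gamma+1$), is the main technical obstacle.

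Next I would \textbf{transfer the zero count}. The functional $\Phi:g\mapsto\#\{u\in[0,1):g(u)=0\}$ is continuous on $C^1[0,1]$ at every $g$ with no zero at $0$ or $1$, finitely many zeros, and only simple zeros. The law of $G$ is carried by such $g$: $G(0)$ and $G(1)$ are non--degenerate Gaussians, hence a.s.\ nonzero; $G$ has a.s.\ finitely many zeros on $[0,1]$ (which follows a posteriori from $\E\Phi(G)<\infty$ via the Kac--Rice bound below, or from the analyticity of the paths); and by Bulinskaya's lemma $G$ and $G'$ have a.s.\ no common zero, because $(\bar G(u),\bar G'(u))$ has the non--degenerate covariance matrix $\operatorname{diag}\big(1,\gamma(\log q)^2/4\big)$ for every $u$. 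Hence, by the continuous mapping theorem for a.e.-continuous functionals, $N[1-q^n,1-q^{n+1})=\Phi(X_n)\ \todistr\ N_\infty:=\Phi(G)$, a $\{0,1,\dots\}$--valued random variable.

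Finally I would \textbf{compute $\E N_\infty$}. Since the zeros of $G$ coincide with those of $\bar G$, the Kac--Rice (Rice) formula for a stationary, unit--variance, centred Gaussian process with $C^2$ covariance $\rho$ gives, on an interval of length $1$,
$$\E N_\infty=\frac1\pi\sqrt{-\rho''(0)}.$$
From $\rho(h)=(\cosh(h\log(1/q)/2))^{-\gamma}=1-\frac{\gamma(\log q)^2}{8}\,h^2+O(h^4)$ we read off $-\rho''(0)=\gamma(\log q)^2/4$, whence $\E N_\infty=\frac1\pi\cdot\frac{\sqrt\gamma\,|\log q|}{2}=-\frac{\sqrt\gamma\log q}{2\pi}$. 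This is finite, so $N_\infty<\infty$ a.s., and it equals the asserted value, completing the plan.
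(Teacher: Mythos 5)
Your proposal is correct and follows essentially the same route as the paper: a functional limit theorem (Lindeberg CLT for finite-dimensional distributions, a second-moment/normal-family argument for tightness in a space of analytic functions), the continuous mapping theorem for the zero-counting functional, and the Rice formula to evaluate the limiting expectation. The only difference is cosmetic: the paper normalizes $f(1-q^n z)$ by the $z$-dependent quantity $\sqrt{v(1-q^n z)}$ with $z\in[q,1]$, obtaining a self-similar Gaussian limit $Z_\gamma$ that becomes stationary under the exponential change of variable $Y(s)=Z_\gamma(\e^s)$, whereas you normalize by the constant $\sqrt{v(1-q^n)}$ with $u\in[0,1)$ and then stationarize by the explicit factor $q^{\gamma u/2}$; both lead to the same Rice-formula evaluation.
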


\begin{lemma}\label{lem:num_zeroes_exp_bounded}
For every $q\in(e^{-1/32},1)$ and all $1 < \kappa <2$ there is $n_0\in\N$ such that
$$
\sup_{n\geq n_0} \E N^\kappa\l[1-q^n,1-q^{n+1}\r] < +\infty.
$$
\end{lemma}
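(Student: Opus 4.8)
The plan is to bound $N$ on $\l[1-q^n,1-q^{n+1}\r]$ from above by an \emph{a priori} quantity attached to $f$ on a complex disc of radius of order $q^n$, and then to estimate the $\kappa$-th moment of that quantity. Set $r_n=1-q^n$, let $m_n$ be the midpoint of $\l[1-q^n,1-q^{n+1}\r]$, and put $s_n^2=\EW{f(r_n)^2}=\sum_{k\ge 0}c_k^2r_n^{2k}$. Since $1-q<1/32$ when $q>\e^{-1/32}$, the interval $\l[1-q^n,1-q^{n+1}\r]$ sits inside a disc $D_n$ centred at $m_n$ of radius $\asymp q^n$, and a concentric disc $D_n'$ of, say, twice the radius is still contained in $\bD$ with room to spare; the condition on $q$ enters here and, in a more serious way, in the anti-concentration step below. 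By \eqref{eq:c_k_reg_var} and a Tauberian estimate for $\sum_k c_k^2|z|^{2k}$ one has $\EW{|f(z)|^2}=\sum_k c_k^2|z|^{2k}\asymp s_n^2$ uniformly for $z\in D_n'$. Let $g_n=f/s_n$. Because $N\l[1-q^n,1-q^{n+1}\r]$ does not exceed the number of zeroes of $g_n$ in $D_n$, Jensen's formula applied at two concentric circles (which avoids conditioning on $g_n(m_n)\neq 0$) yields a constant $C=C(q)$ with
\[
N\l[1-q^n,1-q^{n+1}\r]\;\le\;C\l(\log_+M_n+\log_+(1/\mu_n)\r),
\]
where $M_n=\max_{z\in\partial D_n'}|g_n(z)|$ and $\mu_n=\exp\bigl(\tfrac1{2\pi}\int_0^{2\pi}\log|g_n(m_n+\rho_n\e^{\I\theta})|\,\d\theta\bigr)$ for a radius $\rho_n\asymp q^n$ with $D_n\subset\{|z-m_n|<\rho_n\}$. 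As $x\mapsto x_+^{\kappa}$ is convex and non-decreasing, it suffices to prove $\sup_n\EW{(\log_+M_n)^{\kappa}}<\infty$ and $\sup_n\EW{(\log_+(1/\mu_n))^{\kappa}}<\infty$.

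The first bound is the routine half. A Cauchy (Poisson-kernel) estimate dominates $M_n$ by a fixed multiple of the circular mean of $|g_n|$ over a slightly larger circle, so $\EW{M_n^2}\le\frac{C}{2\pi}\int_0^{2\pi}\EW{|g_n(z)|^2}\,\d\theta\le C$ by the uniform comparison $\EW{|f(z)|^2}\asymp s_n^2$; hence $\P{M_n\ge\e^t}\le C\e^{-2t}$ and $\EW{(\log_+M_n)^{\kappa}}=\int_0^\infty\kappa t^{\kappa-1}\P{M_n\ge\e^t}\,\d t\le C\,\Gamma(\kappa+1)$, uniformly in $n$ (in fact for every $\kappa>0$).

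The hard part is the second bound, namely that $g_n$ is not abnormally small on $D_n$, uniformly in $n$. Writing $-\log x=\log_+(1/x)-\log_+x$ and using convexity, this reduces to a uniform small-ball estimate $\sup_n\sup_{z\in\partial D_n}\EW{(\log_+(1/|g_n(z)|))^{\kappa}}<\infty$, and thus to a bound $\P{|g_n(z)|<\delta}\le\psi(\delta)$, valid for all $z$ on that circle and uniform in $n$, with $\int_0^\infty\kappa t^{\kappa-1}\psi(\e^{-t})\,\d t<\infty$. The obstacle is that $\xi_0$ is only assumed non-degenerate, so $g_n(z)$ may be essentially discrete (e.g.\ for $\xi_0$ Rademacher and $z$ real), and a naive estimate gives a small-ball probability that stops decaying once $\delta$ drops below the reciprocal square root of the number of coefficients contributing effectively to $f$ near $z$ — which is not summable as required. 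The plan to overcome this is to combine: (i) the fact that, after rescaling by $s_n$, $f|_{D_n}$ is a sum of $\asymp q^{-n}$ independent summands of comparable order of magnitude, for which a Kolmogorov--Rogozin/Esseen concentration-function inequality gives a bound that improves as $n\to\infty$; (ii) the analyticity of $g_n$ together with Cauchy estimates, which keep $|g_n|$ on $\partial D_n$ comparable across the circle, so that it is enough to control the smallness of $g_n$ at a single, suitably chosen non-real point of $D_n$, where the relevant random variable is genuinely two-dimensional; and (iii) a second-moment argument, which is where the hypothesis $\EW{\xi_0^2}<\infty$ — and hence the restriction $\kappa<2$ — is expected to enter. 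I anticipate step (iii), that is, upgrading the pointwise small-ball estimate to one that is summable against $t^{\kappa-1}$ for every $\kappa<2$ uniformly in $n$, to be the main difficulty; under a $(2+\delta)$-th moment assumption on $\xi_0$ the same scheme should give the bound for all $\kappa<2+\delta$.
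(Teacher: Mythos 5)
Your reduction via Jensen's formula on two concentric circles is a reasonable opening move, and the bound $\sup_n\E\bigl[(\log_+M_n)^{\kappa}\bigr]<\infty$ via $\E|f(z)|^2\asymp s_n^2$ is fine. But the small-ball half contains a concrete error and two structural gaps. The error is step~(ii): analyticity and Cauchy estimates bound the \emph{maximum} of $|g_n|$ on inner discs from the sup on an outer circle; they say nothing about $|g_n|$ being ``comparable across the circle''. An analytic function can be tiny on one arc of $\partial D_n$ and of order one on the opposite arc, so you cannot replace the anti-concentration requirement at every point of $\partial D_n$ (including the real points) by a single non-real point. At real points the relevant random variable is genuinely one-dimensional and, for Rademacher $\xi_0$, essentially discrete, which is exactly the difficulty you flag.

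The deeper gap is that no small-ball estimate of the strength your scheme requires is actually available, and you leave step~(iii) unresolved. What can be proved (this is Lemma~\ref{lemma:abschaetzungdmeins}, by Fourier inversion with a uniform smoothing $\Theta_\lambda$, the bound $|\phi(t)|\le\e^{-t^2/4}$ near zero, and careful estimates on the descending rearrangement $b_{n,k}$ of the weights $a_{n,k}$) is
$\P\bigl[|X_{q^n}(1)|\le T\bigr]\le C\bigl(T+T^{-1/2}\e^{-n^2/8}\bigr)$.
Putting $T=\e^{-t}$, the second summand integrates against $t^{\kappa-1}\,\d t$ to a quantity of order $n^{2(\kappa-1)}$, so $\E\bigl[(\log_+(1/|g_n(z)|))^{\kappa}\bigr]$ is \emph{not} bounded uniformly in $n$; your direct moment estimate on $\log_+(1/\mu_n)$ therefore fails. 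The paper instead couples this pointwise small-ball bound with a multi-level structure: the events $D_m^{(n)}=\{N_n\ge m\}$ are bounded (Lemma~\ref{lemma:abschaetzungdmn}) by combining the small-ball estimate at a single point with a Rolle-plus-Cauchy--Schwarz bound on the $m$-fold integral (Lemma~\ref{lemma:abschaetzungdmzwei}) and a Cauchy-estimate bound on $\E|f^{(m)}|^2$ (Lemma~\ref{lemma:abschaetzungrandomanalyticfunktion}); the sum $\sum_m(2m{-}1)\P[D_m^{(n)}]$ is then controlled with a cutoff at $m\asymp n^2$. Finally, the extreme tail $N_n>q^{-n^2}$ is treated separately (Lemma~\ref{lemma:Nngreaterlambda}) by conditioning on the first index with $|\xi_k|$ bounded away from zero and applying Jensen's formula to $f^{(k)}$ --- which is where a Jensen-type argument actually enters, but to a derivative of $f$, not to $f$ itself. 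Your plan has no analogue of either the $D_m^{(n)}$ decomposition or this conditioning, and without them the available anti-concentration estimate is too weak to close the argument.
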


\begin{proof}[Proof of Theorem \ref{thm:mainresult} given Lemmas~\ref{lem:num_zeroes_conv_distr} and~\ref{lem:num_zeroes_exp_bounded}]
Fix some $q\in(e^{-1/32},1)$. Taken together, both lemmas imply by uniform integrability that
\begin{equation}\label{eq:konvergenzerwartungswerteteilintervall}
	\lim_{n\to\infty} \E N\l[1-q^n,1-q^{n+1}\r)=-\frac{\sqrt{\gamma}\log q}{2\pi}.
\end{equation}
The interval $[0,1)$ can be covered by disjoint intervals of the form $[1-q^n,1-q^{n+1})$, where $n=0,1,\ldots$. Hence, we have the following estimates:
\begin{align}\label{eq:est_obenundunten}
\begin{split}
\frac{\E N[0,r]}{-\log(1-r)}
&\leq\frac{1}{-\log q} \cdot  \frac 1 {\log_q(1-r)} \sum_{k=0}^{\lfloor\log_q(1-r) \rfloor} \E N[1-q^k,1-q^{k+1}), \\
\frac{\E N[0,r]}{-\log(1-r)}
&\geq\frac{1}{-\log q} \cdot \frac 1 {\log_q(1-r)} \sum_{k=0}^{\lfloor\log_q(1-r) \rfloor -1} \E N[1-q^k,1-q^{k+1}),
\end{split}
\end{align}
where $\log_q$ denotes the logarithm with base $q$.
Since the Ces\`{a}ro limit of a convergent sequence coincides with its usual limit, \eqref{eq:konvergenzerwartungswerteteilintervall} and~\eqref{eq:est_obenundunten} imply
\begin{align}\label{eq:cesaro_convergence}
\begin{split}
\lim_{r\uparrow 1} \frac 1 {\log_q(1-r)} \sum_{k=0}^{\lfloor \log_q(1-r) \rfloor} \E N[1-q^k,1-q^{k+1})
=
-\frac{\sqrt{\gamma}\log q}{2\pi}, \\
\lim_{r\uparrow 1}\frac 1 {\log_q(1-r)} \sum_{k=0}^{\lfloor\log_q(1-r) \rfloor-1} \E N[1-q^k,1-q^{k+1})
=
-\frac{\sqrt{\gamma}\log q}{2\pi}.
\end{split}
\end{align}
Taken together, \eqref{eq:est_obenundunten} and \eqref{eq:cesaro_convergence}  yield the statement of Theorem \ref{thm:mainresult}.
\end{proof}

\subsection{Method of proof of Lemmas~\ref{lem:num_zeroes_conv_distr} and~\ref{lem:num_zeroes_exp_bounded}}
To prove Lemma~\ref{lem:num_zeroes_conv_distr}, we consider the sequence of stochastic processes
\begin{equation}
X_{q^n}(z):=\frac{f(1-q^n z)}{\sqrt{v(1-q^n z)}}, \quad n\in\N,
\end{equation}
where $v(z)$ denotes the variance of $f(z)$:
\begin{equation*}
v(z):=\EW{f^2(z)}=\sum_{k=0}^\infty c_k^2 z^{2k}.
\end{equation*}
For the time being, $X_{q^n}(z)$ is defined for $z\in [q,1]$, but later we shall continue it analytically to a larger domain.
Clearly, the number of zeroes of $X_{q^n}$ in the interval $[q,1]$ is the same as the number of zeroes of $f$ on $[1-q^n,1-q^{n+1}]$.
In Section \ref{sec:convergence_to_gaussian_process} it will be shown that the process $X_{q^n}$ converges, as $n\to\infty$,  to certain Gaussian process weakly on a suitable space of analytic functions. The expected number of real zeroes of the latter process in $[q,1]$ can be calculated explicitly using the Rice formula. Given the weak convergence of random analytic functions, we can conclude weak convergence of their number of zeroes using the continuous mapping theorem following the method of~\cite{iksanov2016}.

The proof of Lemma~\ref{lem:num_zeroes_exp_bounded} is much more complicated. Essentially, we have to show that the process $X_{q^n}$ cannot have ``too many'' zeroes in $[q,1]$, which is very closely related to showing that the random variables $|X_{q^n}(s)|$, $s\in [q,1]$, cannot be ``too small''; see Lemma~\ref{lemma:abschaetzungdmeins}, below.  Questions of this type are known to be rather difficult in the literature on random matrices and random polynomials. If $\xi_0$ takes the values $\pm 1$ with probability $1/2$ and $c_k=1$, the distribution of $X_{q^n}(s)$ is known as Bernoulli convolution, and the question whether it is singular or absolutely continuous w.r.t.\ the Lebesgue measure is highly non-trivial; see~\cite{solomyak} for a review.
To prove Lemma~\ref{lem:num_zeroes_exp_bounded}, we develop further the ideas from the paper of \citet{ibragimov_maslova1} who considered random polynomials of the form $P_n(z) = \sum_{k=0}^n \xi_k z^k$. Both the presence of weights $c_k$ and the fact that the number of terms in $f(z)$ is infinite  lead to considerable technical problems. At this point, let us mention just one, by far not the most severe, difficulty. In the case of polynomials, the number of zeroes is trivially bounded above by $n$. In the case case of Taylor series, even this trivial bound is not available, and we have to work hard to prove Lemma~\ref{lemma:Nngreaterlambda}, below,  which gives a bound on the truncated expectation of the number of zeroes of $f$ in some interval.

\section{Local convergence to the Gaussian process}\label{sec:convergence_to_gaussian_process}
\subsection{Variance and covariance}
Note that for all $z,w\in \bD$ we have $\E [f(z)] = 0$ and
\begin{equation}\label{eq:covar_f}
\E [f(z) \overline{f(w)}] = \sum_{k=0}^\infty c_k^2 (z\bar w)^{k} = v(\sqrt{z\bar w})
,\quad
\E [f(z) f(w)] = \sum_{k=0}^\infty c_k^2 (z w)^{k} = v(\sqrt{z w}),
\end{equation}
where we defined
\begin{equation}\label{eq:def_v}
v(z):= \sum_{k=0}^\infty c_k^2 z^{2k}, \quad z\in \CN,\;\; |z|<1.
\end{equation}
In particular,
$$
\Var [f(z)] = \E [|f(z)|^2] =  \sum_{k=0}^\infty c_k^2 |z|^{2k} = v(|z|).
$$

The next Abelian theorem is well known. Usually, it is stated for real $z$, see, e.g., \cite[Theorem 5 on p.~423]{feller1966}. The complex version can be found in~\cite{karamata}, \cite{baumann}.
\begin{theorem}\label{theo:abel}
Under condition~\eqref{eq:c_k_reg_var}, we have
$$
v(1 - a z) \sim (2az)^{-\gamma} L\l(\frac{1}{a}\r) \quad \text{ as } a\downarrow 0, \;\; a\in\R,
$$
uniformly as long as $z$ stays in any compact subset of the open right half-plane $\{\Re z >0\}$.
\end{theorem}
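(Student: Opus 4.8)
The plan is to reduce the assertion to the classical complex (Stolz-angle) form of Karamata's Abelian theorem for power series, as found in~\cite{karamata},~\cite{baumann}, by means of the substitution $s=(1-az)^2$, and then to rewrite the resulting asymptotics in the normalisation of the theorem. Fix once and for all a compact set $K\subset\{\Re z>0\}$; by compactness there are constants $0<\delta\leq m\leq M<\infty$ with $\Re z\geq\delta$ and $m\leq|z|\leq M$ for every $z\in K$, and it suffices to prove the asymptotics uniformly over $z\in K$.

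First I would substitute $s=s(z,a):=(1-az)^2$. Recalling that $c_0=0$ (see the footnote to~\eqref{eq:c_k_reg_var}) and using~\eqref{eq:def_v},
\[
v(1-az)=\sum_{k=1}^\infty c_k^2\,(1-az)^{2k}=g(s),\qquad\text{where}\quad g(s):=\sum_{k=1}^\infty c_k^2\, s^k.
\]
For $z\in K$ and $a$ small enough, $|s|=1-2a\Re z+a^2|z|^2<1$, so $g(s)$ is well defined, and
\[
1-s=2az\Bigl(1-\tfrac{az}{2}\Bigr)\sim 2az,\qquad 1-|s|=2a\Re z-a^2|z|^2\sim 2a\Re z,
\]
uniformly in $z\in K$ as $a\downarrow 0$. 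Consequently $\arg(1-s)=\arg z+O(a)$ stays in a fixed compact subinterval of $(-\tfrac\pi2,\tfrac\pi2)$, while $|1-s|/(1-|s|)\to|z|/\Re z\leq M/\delta$; hence $s$ approaches $1$ inside one fixed Stolz angle, uniformly for $z\in K$.

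The next step is to invoke the complex Abelian theorem: because~\eqref{eq:c_k_reg_var} makes $k\mapsto c_k^2$ regularly varying of index $\gamma-1$ (with the constant $\Gamma(\gamma)$ cancelling), one has
\[
g(s)\sim (1-s)^{-\gamma}\,L\bigl(1/|1-s|\bigr)\qquad\text{as }s\to 1\text{ within any fixed Stolz angle,}
\]
uniformly there. It then remains to simplify the right-hand side with the estimates above. Since $1-s\sim 2az$ with $\arg(1-s)$ bounded away from $\pm\tfrac\pi2$ and $w\mapsto w^{-\gamma}$ continuous on that sector, $(1-s)^{-\gamma}\sim(2az)^{-\gamma}$; and since $|1-s|\sim 2a|z|$ with $1/(2|z|)$ confined to the compact subset $[\tfrac1{2M},\tfrac1{2m}]$ of $(0,\infty)$, the uniform convergence theorem for slowly varying functions gives $L(1/|1-s|)\sim L\bigl(1/(2a|z|)\bigr)\sim L(1/a)$, uniformly in $z\in K$. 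Combining, $v(1-az)=g(s)\sim(2az)^{-\gamma}L(1/a)$ uniformly for $z\in K$, which is the claim.

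I do not expect a serious obstacle: once the substitution is made, the argument is routine, the only points requiring care being that the complex Abelian theorem is quoted in a form that is uniform over a Stolz angle, and that the reduction of $L(1/|1-s|)$ to $L(1/a)$ genuinely invokes the uniform convergence theorem for slowly varying functions. If one preferred to avoid citing the complex Abelian theorem, a self-contained alternative is a Vitali-type normal families argument: the functions $g_a(z):=v(1-az)/\bigl((2az)^{-\gamma}L(1/a)\bigr)$ are holomorphic on the domains $D_a=\{z:\Re z>0,\ |1-az|<1\}$, which increase to $\{\Re z>0\}$ as $a\downarrow 0$; they are locally bounded, since $|v(1-az)|\leq v(|1-az|)\leq v\bigl(\sqrt{1-2a\delta+a^2M^2}\bigr)$ ($v$ being nondecreasing on $[0,1)$) and the \emph{real} Abelian theorem \cite[Theorem~5, p.~423]{feller1966} controls the last expression; and $g_a(x)\to 1$ for every fixed real $x>0$ by the real Abelian theorem together with the slow variation of $L$. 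Vitali's theorem then forces $g_a\to 1$ locally uniformly on $\{\Re z>0\}$, and the only delicate point in this variant is the verification that the local bound on $g_a$ is uniform in $a$.
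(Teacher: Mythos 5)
Your proposal is correct, and it fills a genuine gap: the paper does not prove Theorem~\ref{theo:abel} but merely labels it as ``well known'' and points to the references for the complex (Stolz-angle) version. Your first argument --- substituting $s=(1-az)^2$, checking that $s\to 1$ inside a fixed Stolz angle uniformly over $z\in K$, invoking the classical complex Karamata--Abel theorem $g(s)\sim(1-s)^{-\gamma}L(1/|1-s|)$, and then simplifying $(1-s)^{-\gamma}\sim(2az)^{-\gamma}$ and $L(1/|1-s|)\sim L(1/a)$ by the uniform convergence theorem for slowly varying functions --- makes explicit exactly the reduction the authors presumably had in mind when citing those references, and you keep careful track of the two distinct uniformities (in the Stolz variable $s$ and over $z\in K$), which is the only place a careless reader might go wrong. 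The alternative Vitali/normal-families argument is a genuinely different and more elementary route: it trades the complex Abelian theorem for the real one plus Montel and the identity theorem, at the cost of having to verify the uniform local bound $|g_a(z)|\leq v(|1-az|)/\bigl((2a|z|)^{-\gamma}L(1/a)\bigr)\leq C$, which you correctly obtain from the monotonicity of $v$ on $[0,1)$, the bound $|1-az|\leq\sqrt{1-2a\delta+a^2M^2}$ for $z\in K$, and the real Abelian theorem applied to that majorant. Both arguments are sound, and the Vitali variant has the pedagogical advantage of being self-contained modulo the real case. One small remark: you use $|(2az)^{-\gamma}|=(2a|z|)^{-\gamma}$ for the principal branch, which is fine since $\Re z>0$ forces $\arg(2az)\in(-\pi/2,\pi/2)$; similarly, the factorisation $(1-s)^{-\gamma}=(2az)^{-\gamma}(1-az/2)^{-\gamma}$ with principal branches is legitimate because the arguments of the factors sum inside $(-\pi,\pi)$ once $a$ is small --- you implicitly use this and it is worth stating, but it is not a gap.
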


\subsection{Functional limit theorem}
Fix some $R>1$ and consider the rectangle
$$
Q_R:= \{z\in\CN\colon \Re z\in [R^{-1},R], |\Im z| \leq R\} \subset \{\Re z >0\}.
$$  Let $\mathcal{H}(Q_R)$ be the Banach space of complex-valued functions which are continuous on $Q_R$ and analytic in the interior of $Q_R$. We endow $\mathcal{H}(Q_R)$ with the usual supremum norm.  Let
$\mathcal H_{\R}(Q_R)$ be the closed subset of $\mathcal H(Q_R)$ consisting of functions which take real
values on $\R\cap Q_R$.
For sufficiently small $a>0$ we define the following random analytic function on $Q_R$:
\begin{equation}\label{eq:defXn}
X_a(z):=\frac{f(1- a z)}{\sqrt{v(1- a z)}}, \quad z\in Q_R.
\end{equation}
Note that $X_a$ is well-defined because for sufficiently small $a>0$, $1- a Q_R$ is a subset of
the unit disc and the function $z\mapsto v(1-az)$ has no zeroes in $Q_R$ by Theorem~\ref{theo:abel}, so that we can take the
principal branch of the square root. A major step in proving Lemma~\ref{lem:num_zeroes_conv_distr} is
the following functional limit theorem.
\begin{theorem}\label{theo:FCLT}
Under the above assumptions, weakly on the space $\mathcal H_{\R}(Q_R)$, we have
\begin{equation}\label{eq:theo:FCLT}
(X_a(z))_{z\in Q_R} \toweaka  (Z_\gamma(z))_{z\in Q_R} \text{ as } a\downarrow 0,
\end{equation}
where $Z_\gamma$ is a random analytic function defined on the right half-plane $\Re z>0$ by
$$
Z_\gamma(z) = \frac{(2z)^{\gamma/2}}{\sqrt{\Gamma(\gamma)}} \int_0^\infty \e^{-zw} w^{(\gamma-1)/2} \dd B(w),
$$
with $(B(w))_{w\geq 0}$ being a standard real-valued Brownian motion.
\end{theorem}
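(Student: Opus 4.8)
The plan is to prove the functional limit theorem in two stages: first establish convergence of the finite-dimensional distributions of $(X_a(z))_{z\in Q_R}$ to those of $(Z_\gamma(z))_{z\in Q_R}$, and then upgrade this to weak convergence on $\mathcal H_\R(Q_R)$ by verifying tightness in the supremum norm. For the finite-dimensional distributions, fix $z_1,\dots,z_m\in Q_R$ and note that since $f(1-az_j)=\sum_{k\geq 0}\xi_k c_k(1-az_j)^k$ is a linear combination of the i.i.d.\ $\xi_k$'s with square-summable (for fixed small $a$) coefficients, any finite linear combination $\sum_j t_j X_a(z_j)$ is, after normalization, a sum of independent random variables to which the Lindeberg central limit theorem applies; the Lindeberg condition follows from $\E[\xi_0^2]=1$, the uniform-integrability it implies, and the fact that no single coefficient dominates (the total variance is comparable to $1$ by construction, while individual terms $c_k^2|1-az_j|^{2k}$ are of smaller order since $c_k^2$ is regularly varying — this is where Theorem~\ref{theo:abel} enters to control the normalizing variances $v(1-az_j)$ and the cross-covariances $v(\sqrt{(1-az_j)\overline{(1-az_l)}})$ asymptotically). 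Thus the limiting finite-dimensional distributions are centered Gaussian, and one identifies the limiting covariance: by~\eqref{eq:covar_f} and Theorem~\ref{theo:abel},
$$
\E[X_a(z_j)\overline{X_a(z_l)}] = \frac{v(\sqrt{(1-az_j)\overline{(1-az_l)}})}{\sqrt{v(1-az_j)}\,\sqrt{v(1-az_l)}} \longrightarrow \frac{(z_j+\bar z_l)^{-\gamma}}{(2z_j)^{-\gamma/2}(2\bar z_l)^{-\gamma/2}} = \left(\frac{2\sqrt{z_j\bar z_l}}{z_j+\bar z_l}\right)^{\gamma},
$$
and a direct computation (Itô isometry applied to the stochastic integral defining $Z_\gamma$, using $\int_0^\infty \e^{-(z_j+\bar z_l)w}w^{\gamma-1}\dd w = \Gamma(\gamma)(z_j+\bar z_l)^{-\gamma}$) shows this matches $\E[Z_\gamma(z_j)\overline{Z_\gamma(z_l)}]$; the analogous identity holds for $\E[X_a(z_j)X_a(z_l)]$ with $\bar z_l$ replaced by $z_l$, so the full (non-circular) Gaussian structure is matched.

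For tightness on $\mathcal H(Q_R)$, I would use the standard criterion for families of random analytic functions: a sequence is tight in the sup-norm on $Q_R$ provided it is tight on a slightly larger domain and the second moments are uniformly bounded there — concretely, it suffices to bound $\sup_a \E|X_a(z)|^2$ uniformly for $z$ in a neighborhood of $Q_R$ (this gives tightness via Montel's theorem / the Cauchy integral formula argument, exactly as used in~\cite{iksanov2016}), or alternatively to check a Kolmogorov-type modulus-of-continuity estimate $\E|X_a(z)-X_a(w)|^2 \leq C|z-w|^2$. Both reduce to controlling $\E|X_a(z)|^2 = v(|1-az|)/|v(1-az)|$ and the increments of $v(1-az)$, which is precisely what the uniform version of the Abelian Theorem~\ref{theo:abel} delivers on compact subsets of the right half-plane: enlarging $R$ slightly to $R'$ keeps $Q_{R'}$ inside $\{\Re z>0\}$, and Theorem~\ref{theo:abel} gives $v(1-az)\sim (2z)^{-\gamma}L(1/a)$ uniformly there, so the ratio stays bounded. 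Finally, since each $X_a$ takes real values on $\R\cap Q_R$ and $\mathcal H_\R(Q_R)$ is closed in $\mathcal H(Q_R)$, the limit lies in $\mathcal H_\R(Q_R)$, and combining finite-dimensional convergence with tightness yields weak convergence on $\mathcal H_\R(Q_R)$, as claimed; one also checks that $Z_\gamma$ is genuinely analytic on $\{\Re z>0\}$, which follows by dominated convergence / Fubini from the integral representation.

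The main obstacle is the tightness step, and within it the honest use of the \emph{uniform} Abelian theorem together with control of the infinitely many tail terms $\sum_{k>K}\xi_k c_k(1-az)^k$: one must argue that for $z$ ranging over the enlarged rectangle the normalized partial sums are uniformly close (in probability, uniformly in $z$) to the full series, so that the Lindeberg CLT applied termwise transfers to the whole function. This is routine in spirit — it hinges on $v(1-az)\to\infty$ slower than any geometric rate while the contribution of any fixed-length block of terms is negligible after normalization — but the bookkeeping, especially keeping all estimates uniform over $Q_{R'}$ and over small $a$, is the technical heart of the argument. The Gaussian computations identifying $Z_\gamma$, by contrast, are mechanical once the Itô isometry is invoked.
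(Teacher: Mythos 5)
Your outline matches the paper's strategy point for point: prove finite-dimensional convergence via a (Cram\'er--Wold $+$ multivariate Lindeberg) CLT, identify the limiting covariance through the Abelian Theorem~\ref{theo:abel} and the It\^o isometry, and establish tightness on $\mathcal H_\R(Q_R)$ via a uniform bound on $\E|X_a(z)|^2$ combined with the standard criterion for random analytic functions. The covariance computation and the tightness step are essentially correct as written (one small slip: the denominator in your covariance formula should be $\sqrt{v(1-az_j)}\,\sqrt{v(1-a\bar z_l)}$, with a conjugate, which the paper sidesteps via the identity $\overline{X_a(z)}=X_a(\bar z)$).

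The gap is in the step you yourself flag as ``routine in spirit'': the verification of the Lindeberg condition for the \emph{infinite} array with a \emph{complex} normalizer. Three concrete pieces are missing. First, for $z\in Q_R\setminus\R$ the normalization is by $\sqrt{v(1-az)}$ with $v(1-az)\in\CN$, so the total variance of $X_a(z)$ is $v(|1-az|)/|v(1-az)|$ rather than $1$; your phrase ``the total variance is comparable to $1$ by construction'' is therefore not automatic and requires the uniform comparison $|v(1-az)|\geq c\,v(|1-az|)$ for $z\in Q_R$ (the paper's Lemma~\ref{lemma:v_with_abs_val}), which is itself a small argument built on the uniform Abelian theorem and the elementary estimate $c|z|\leq (1-|1-az|)/a\leq |z|$. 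Second, the claim that ``no single coefficient dominates'' needs to be turned into the quantitative statement $\max_{k\leq n}c_k^2/(c_0^2+\dots+c_n^2)\to0$, which the paper proves (Lemma~\ref{lemma:regvarmaximum}) via Karamata. Third, and most substantively, one has to control the Lindeberg sum over the infinitely many tail indices $k>y/a$: the paper splits the Lindeberg sum into $k\leq y/a$ (handled by the two lemmas just mentioned) and $k>y/a$, and for the latter proves the asymptotic $\sum_{k\geq y/a}c_k^2|1-az|^{2k}/|v(1-az)| \to \frac{(2|z|)^\gamma}{\Gamma(\gamma)}\int_y^\infty x^{\gamma-1}\e^{-2(\Re z)x}\,\dd x$ (Lemma~\ref{lemma:asymptotik_Fnk_ystat}), so that the tail contribution vanishes as $y\to\infty$. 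You propose instead truncating the series, applying the finite-row CLT, and showing the tails are small in probability uniformly in $z$; that route can work but requires essentially the same estimate, and in either case the paper's Proposition~\ref{lemma:multivariate_lindeberg_CLT} (a Lindeberg CLT for infinite rows, proved from the finite-row version by exactly your truncation idea) is the clean way to package it. As it stands your proposal correctly identifies the architecture of the proof but does not supply the estimates on which the Lindeberg condition actually rests.
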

The fact that $Z_\gamma(z)$ is indeed a random analytic function on the right half-plane can
be easily seen by partial integration.  It follows from the above that for all $s,t\in  \CN$, $\Re
s>0, \Re t>0$, we have
\begin{equation*}
\E [Z_{\gamma}(t) \overline{Z_{\gamma}(s)}] = \frac{2^{\gamma}(t\bar s)^{\gamma/2}}{(t+\bar s)^{\gamma}},
\quad
\E [Z_{\gamma}(t) Z_{\gamma}(s)] = \frac{2^{\gamma}(ts)^{\gamma/2}}{(t+s)^{\gamma}}.
\end{equation*}
The restriction of $Z_\gamma$ to $(0,\infty)$ is a real-valued, centered Gaussian process.
We can transform $Z_\gamma$ to a stationary process by considering
$$
Y(s) := Z(\e^s), \quad s\in\CN,  \;\;|\Im s|< \pi/2.
$$
Then, $(Y(s))_{s\in\R}$ is a zero-mean stationary real-valued Gaussian process with covariance function
\begin{equation*}
\COV\l[Y(u),Y(v)\r]=\l(\cosh\l(\frac{v-u}{2}\r)\r)^{-\gamma}, \quad u,v\in\R.
\end{equation*}
The above discussion shows that the process $(Y(s))_{s \in\R}$ admits an analytic continuation to the strip $\{z\in\ \CN\colon |\Im z| < \pi/2\}$. For $\gamma=1$, the process $Y$ appeared in the work of~\citet{dembo_etal}.

\begin{corollary}
We have the following one-dimensional CLT:
\begin{equation}\label{eq:CLT_one_dim}
\frac{f(1-a)}{(2a)^{-\gamma/2} \sqrt{L(1/a)}} \todistrnon N(0,1) \text{ as } a\downarrow 0.
\end{equation}
\end{corollary}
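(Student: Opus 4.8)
The plan is to deduce the corollary from the functional limit theorem (Theorem~\ref{theo:FCLT}) by evaluating the limiting random analytic functions at the single point $z=1$, and then to fix up the normalization using the Abelian theorem (Theorem~\ref{theo:abel}) together with Slutsky's lemma.

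\emph{Step 1: reduce to a one-dimensional statement via the continuous mapping theorem.} Fix any $R>1$, so that $1\in Q_R$, and note that $v(1-a)>0$ for all sufficiently small $a>0$ by Theorem~\ref{theo:abel}, so that $X_a(1)=f(1-a)/\sqrt{v(1-a)}$ is well defined. The evaluation functional $g\mapsto g(1)$ is continuous (indeed $1$-Lipschitz) on $\mathcal H_\R(Q_R)$ equipped with the supremum norm. Hence Theorem~\ref{theo:FCLT} and the continuous mapping theorem give
$$
X_a(1)=\frac{f(1-a)}{\sqrt{v(1-a)}}\todistrnon Z_\gamma(1)\quad\text{as }a\downarrow 0.
$$

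\emph{Step 2: identify the limit.} The restriction of $Z_\gamma$ to $(0,\infty)$ is a centered Gaussian process, so $Z_\gamma(1)$ is a centered Gaussian random variable, and the covariance formula displayed just after Theorem~\ref{theo:FCLT} gives $\E[Z_\gamma(1)^2]=2^\gamma/(1+1)^\gamma=1$. Thus $Z_\gamma(1)\eqdistr N(0,1)$ and consequently $X_a(1)\todistrnon N(0,1)$.

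\emph{Step 3: rescale.} Applying Theorem~\ref{theo:abel} with $z=1$ yields $v(1-a)\sim (2a)^{-\gamma}L(1/a)$ as $a\downarrow 0$, so that the deterministic ratio $\sqrt{v(1-a)}\big/\bigl((2a)^{-\gamma/2}\sqrt{L(1/a)}\bigr)\to 1$. Writing
$$
\frac{f(1-a)}{(2a)^{-\gamma/2}\sqrt{L(1/a)}}=X_a(1)\cdot\frac{\sqrt{v(1-a)}}{(2a)^{-\gamma/2}\sqrt{L(1/a)}}
$$
and invoking Slutsky's lemma gives the claimed convergence. There is no genuine obstacle here: the corollary is a direct consequence of Theorems~\ref{theo:abel} and~\ref{theo:FCLT}, and the only things to verify are the continuity of the evaluation functional on $\mathcal H_\R(Q_R)$ and the elementary variance computation $\E[Z_\gamma(1)^2]=1$.
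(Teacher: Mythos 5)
Your proof is correct and follows essentially the same route as the paper: evaluate at $z=1$ via the continuous mapping theorem, then replace $\sqrt{v(1-a)}$ by its Abelian asymptotics using Theorem~\ref{theo:abel}. You merely spell out the two small details the paper leaves implicit (the variance computation $\E[Z_\gamma(1)^2]=1$ and the invocation of Slutsky's lemma), which is fine.
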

\begin{proof}
Observe that $\varphi\mapsto \varphi(1)$ defines a continuous mapping from $\mathcal H_\R(Q_R)$ to $\R$. The continuous mapping theorem applied to~\eqref{eq:theo:FCLT}, together with the asymptotics $v(1-a) \sim (2a)^{-\gamma} L(1/a)$, $a\downarrow 0$,  that follows from Theorem~\ref{theo:abel}, yields~\eqref{eq:CLT_one_dim}.
\end{proof}
In the special case when $c_k=1$, the above CLT~\eqref{eq:CLT_one_dim}, along with a law of the iterated logarithm, was obtained by~\citet{bovier_picco_pre,bovier_picco}.

\subsection{Proof of Theorem~\ref{theo:FCLT}}
Fix some positive sequence  $(a_n)_{n\in\N}$  such that $\lim_{n\to\infty} a_n = 0$. Our aim is to show that the process  $(X_{a_n}(z))_{z\in Q_R}$ converges to $(Z_\gamma(z))_{z\in Q_R}$ weakly on $\mathcal H_\R(Q_R)$, as $n\to\infty$.

\vspace*{2mm}
\noindent
\emph{Convergence of the finite-dimensional distributions.}
We need to show that for every $d\in\N$ and every $z_1,\ldots, z_d\in Q_R$,
\begin{equation}\label{eq:conv_fidi}
\begin{pmatrix}
\RE X_a(z_1)\\
\IM X_a(z_1) \\
\vdots \\
\RE X_a(z_d) \\
\IM X_a(z_d)
\end{pmatrix}
\stackrel{\d}{\longrightarrow }
\begin{pmatrix}
\RE Z_\gamma(z_1)\\
\IM Z_\gamma(z_1)\\
\vdots \\
\RE Z_\gamma(z_d) \\
\IM Z_\gamma(z_d)
\end{pmatrix}
\quad \textrm{as $a\downarrow 0$}.
\end{equation}
To this end, we shall verify the conditions of the $2d$-dimensional Lindeberg central limit theorem; see Proposition~\ref{lemma:multivariate_lindeberg_CLT}, below.
More precisely, we  consider the following array of random vectors:
\begin{equation*}
V_{n,k}:=\begin{pmatrix}
\xi_kc_k \RE \frac{(1-a_n z_1)^k}{\sqrt{v(1-a_n z_1)}} \\
\xi_kc_k \IM \frac{(1-a_n z_1)^k}{\sqrt{v(1-a_n z_1)}} \\
\vdots \\
\xi_kc_k\RE \frac{(1-a_n z_d)^k}{\sqrt{v(1-a_n z_d)}} \\
\xi_kc_k\IM \frac{(1-a_n z_d)^k}{\sqrt{v(1-a_n z_d)}}
\end{pmatrix}\in \R^{2d}, \quad
n\in\N, k\in\N_0.
\end{equation*}
Observe that
\begin{equation}\label{eq:sum_V_n_k}
 \sum_{k=0}^\infty V_{n,k}= \begin{pmatrix}
\RE X_{a_n}(z_1)\\
\IM X_{a_n}(z_1) \\
\vdots \\
\RE X_{a_n}(z_d) \\
\IM X_{a_n}(z_d)
\end{pmatrix}.
\end{equation}
Note that the random vector on the right-hand side of~\eqref{eq:sum_V_n_k} has $2d$-dimensional, centered Gaussian distribution.  To prove the convergence of the covariances stated in condition~(a) of Proposition~\ref{lemma:multivariate_lindeberg_CLT},  it suffices to verify that
\begin{equation}
\begin{split}\label{eq:conv_expect}
\lim_{n\to\infty}\E [X_{a_n}(z_i) X_{a_n}(z_j)]&=\E[Z_\gamma(z_i) Z_\gamma(z_j)] = \frac{2^\gamma (z_iz_j)^{\gamma/2}}{(z_i+z_j)^{\gamma}}, \\
\lim_{n\to\infty}\E [X_{a_n}(z_i) \overline{X_{a_n}(z_j)}]&=\E[Z_\gamma(z_i)\overline{Z_\gamma(z_j)}] =\frac{2^\gamma (z_i\bar z_j)^{\gamma/2}}{(z_i+\bar z_j)^{\gamma}}
\end{split}
\end{equation}
because the covariance matrices of the $2d$-dimensional random vectors on the right-hand side of~\eqref{eq:sum_V_n_k} can be expressed as linear combinations of the above covariances. The expectation in the first line of~\eqref{eq:conv_expect} is given by
\begin{equation*}
\EW{X_{a_n}(z_i)X_{a_n}(z_j)}=
\frac{v\l(\sqrt{(1-a_n z_i)(1-a_n z_j)}\r)}{\sqrt{v(1-a_nz_i)v(1-a_nz_j)}}  \ton \frac{2^\gamma (z_iz_j)^{\gamma/2}}{(z_i+z_j)^{\gamma}},
\end{equation*}
where we used~\eqref{eq:defXn}, \eqref{eq:covar_f} and Theorem~\ref{theo:abel}.
The second line of~\eqref{eq:conv_expect} follows from the identity  $\overline{X_{a_n}(z)}=X_{a_n}(\overline{z})$.

It remains to verify the Lindeberg condition of Proposition~\ref{lemma:multivariate_lindeberg_CLT}, i.e.\ to prove that  for every $\eps>0$,
\begin{align*}
L_n(\eps):=\max_{i=1,\dots, 2d}\sum_{k=0}^\infty \EW{V_{n,k}^2(i)\ind_{|V_{n,k}(i)|\geq \eps}} \ton 0,
\end{align*}
where $V_{n,k}(i)$ is the $i$-th coordinate of $V_{n,k}$. Since $|\Re z|\leq |z|$ and $|\Im z|\leq |z|$, it suffices to show that
for every $z\in Q_R$,
\begin{equation}\label{eq:hinreichendfuerlindeberg}
\lim_{n\to\infty}\sum_{k=0}^\infty \frac{c_k^2|1-a_nz|^{2k}}{|v(1-a_nz)|} \EW{\xi^2_k \ind_{\{\xi_k^2\geq m_{n,k}\}}}=0,
\end{equation}
where
\begin{equation*}
m_{n,k}=\frac{\eps^2|v(1-a_n z)|}{|1-a_nz|^{2k}c_k^2}.
\end{equation*}
We shall prove that for every fixed $y>0$,
\begin{equation}\label{eq:Lindeberg1}
\lim_{n\to\infty}
\sum_{k=0}^{\lfloor y/a_n\rfloor} \frac{c_k^2|1-a_n z|^{2k}}{|v(1-a_nz)|} \EW{\xi_k^2\ind_{\{\xi_k^2\geq m_{n,k}\}}}
=0,
\end{equation}
and
\begin{equation}\label{eq:Lindeberg2}
\lim_{y\to\infty} \limsup_{n\to\infty} \sum_{k=\lceil y/a_n\rceil}^\infty \frac{c_k^2|1-a_nz|^{2k}}{|v(1-a_n z)|}\EW{\xi_k^2 \ind_{\xi_k^2 \geq m_{n,k}}}=0.
\end{equation}
\begin{lemma}\label{lemma:v_with_abs_val}
For every $R>1$ there exists $c>0$ such that
$
|v(1-a_n z)| \geq c v(|1-a_n z|)
$
for all $z\in Q_R$ and all sufficiently large $n\in\N$.
\end{lemma}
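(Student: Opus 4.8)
The plan is to deduce the claim directly from the uniform Abelian estimate of Theorem~\ref{theo:abel}. Throughout write $a = a_n$ and note that for $z \in Q_R$ we have $\Re z \in [R^{-1}, R]$ and $|z| \leq R\sqrt{2}$, so that for all sufficiently large $n$ one has $0 < 1 - a R\sqrt 2 \leq |1 - a z| < 1$. Introduce the ``linearization'' $\rho_n(z) := (1 - |1 - a z|)/a$, so that $|1 - a z| = 1 - a\rho_n(z)$. Expanding $|1 - a z| = (1 - 2 a \Re z + a^2 |z|^2)^{1/2}$ one gets $\rho_n(z) = \Re z + O(a)$, uniformly in $z \in Q_R$; in particular, once $n$ is large, all the numbers $\rho_n(z)$, $z \in Q_R$, lie in one fixed compact subset $K$ of the right half-plane $\{\Re z > 0\}$, say $K = [\tfrac{1}{2R}, 2R]$.

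Next I would apply Theorem~\ref{theo:abel} twice with the same $a = a_n \downarrow 0$. Applied with argument $z \in Q_R$ it gives $v(1 - a z) = (2 a z)^{-\gamma} L(1/a)(1 + o(1))$ uniformly in $z \in Q_R$; applied with argument $\rho_n(z) \in K$ it gives $v(|1 - a z|) = v(1 - a \rho_n(z)) = (2 a \rho_n(z))^{-\gamma} L(1/a)(1 + o(1))$ uniformly in $z \in Q_R$ --- here it is essential that the convergence in Theorem~\ref{theo:abel} is uniform over the whole compact set $K$, which is exactly what licenses inserting the $n$-dependent argument $\rho_n(z)$. Dividing the two asymptotics, recalling that $|(2 a z)^{-\gamma}| = (2 a |z|)^{-\gamma}$ for the principal branch and that the factor $L(1/a) > 0$ cancels, one obtains
$$
\left| \frac{v(1 - a_n z)}{v(|1 - a_n z|)} \right| = \left( \frac{\rho_n(z)}{|z|} \right)^{\gamma} (1 + o(1)) \longrightarrow \left( \frac{\Re z}{|z|} \right)^{\gamma} \quad \text{uniformly in } z \in Q_R .
$$
Since $\Re z / |z| \geq R^{-1}/(R\sqrt 2) > 0$ on $Q_R$, the limit is bounded below by $2c := (R^2 \sqrt 2)^{-\gamma} > 0$; as $v(|1 - a_n z|) = \sum_{k \geq 0} c_k^2 |1 - a_n z|^{2k} > 0$ (the coefficients $c_k^2$ are non-negative and non-zero for large $k$), it follows that $|v(1 - a_n z)| \geq c\, v(|1 - a_n z|)$ for every $z \in Q_R$ and every sufficiently large $n$, which is the assertion.

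The argument is essentially routine; the only delicate point is the uniformity when Theorem~\ref{theo:abel} is used with the $n$-dependent argument $\rho_n(z)$, and this is handled by confining all these arguments to a single fixed compact set on which Theorem~\ref{theo:abel} converges uniformly. As an alternative that sidesteps the linearization, one can use that $r \mapsto v(r)$ is non-decreasing on $[0,1)$ to sandwich $v(1 - a_n R\sqrt 2) \leq v(|1 - a_n z|) \leq v(1 - a_n/(2R))$, apply Theorem~\ref{theo:abel} to the two explicit endpoints, and compare with the lower bound $|v(1 - a_n z)| \geq (2 a_n |z|)^{-\gamma} L(1/a_n)(1 + o(1))$; matching the powers of $a_n$ and the slowly varying factors yields the same conclusion with an explicit constant $c = c(R, \gamma)$.
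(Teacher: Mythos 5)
Your proof is correct and follows essentially the same route as the paper: both introduce the linearization $\rho_n(z)=(1-|1-a_nz|)/a_n$ (which the paper sandwiches between $c|z|$ and $|z|$, while you note $\rho_n(z)=\Re z+O(a_n)$), apply the uniform Abelian estimate of Theorem~\ref{theo:abel} to both $v(1-a_nz)$ and $v(|1-a_nz|)=v(1-a_n\rho_n(z))$, and take the quotient. The differences are purely cosmetic.
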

\begin{proof}
Let us first show that there exists $c>0$ such that for all sufficiently large $n$ and all $z\in Q_R$,
\begin{equation}\label{eq:lem3.4}
c|z| \leq \frac{1 - |1-a_n z|}{a_n}\leq |z|.
\end{equation}
The upper estimate follows from the triangle inequality. To prove the lower estimate, observe that since $a_n\to 0$,
\begin{equation}\label{eq:lemma:v_with_abs_val}
|1-a_n z| = \sqrt{1- 2 a_n \Re z + a_n^2 |z|^2} = 1- a_n \Re z  + o(a_n) \leq 1 - \frac {a_n \Re z}2  =  1 -  \frac{\Re z}{2|z|}a_n |z|   ,
\end{equation}
for all $z\in Q_R$ and all sufficiently large $n$. This implies the lower estimate in~\eqref{eq:lem3.4} because $(\Re z) /(2|z|)$ is bounded below for $z\in Q_R$.
According to Theorem~\ref{theo:abel} we have
\begin{align}
|v(1-a_n z)| &\sim |2a_n z|^{-\gamma} L(1/a_n), \label{eq:asymptv1}\\
v(|1-a_n z|) &= v\left(1-a_n \frac {1- |1-a_n z|}{a_n}\right)
\sim 2^{-\gamma} (1- |1-a_n z|)^{-\gamma} L(1/a_n) \label{eq:asymptv2}
\end{align}
as $n\to\infty$.
In fact, by Theorem~\ref{theo:abel}, both~\eqref{eq:asymptv1} and~\eqref{eq:asymptv2} hold uniformly over $z\in Q_R$, where in the latter case we have to employ~\eqref{eq:lemma:v_with_abs_val}.  Taking the quotient of~\eqref{eq:asymptv1} and~\eqref{eq:asymptv2}, recalling that $\gamma>0$ and using~\eqref{eq:lem3.4} we arrive at the required estimate $|v(1-a_n z)| \geq c v(|1-a_n z|)$.
\end{proof}

\vspace*{2mm}
\noindent
\emph{Proof of~\eqref{eq:Lindeberg1}.}
Let $\tilde m_n := \min_{k=0,\dots,\lfloor y/a_n\rfloor } m_{n,k}$. Then, by Lemma~\ref{lemma:v_with_abs_val} and~\eqref{eq:def_v},
\begin{align*}
\sum_{k=0}^{\lfloor y/a_n\rfloor} \frac{c_k^2|1-a_n z|^{2k}}{|v(1-a_nz)|} \EW{\xi_k^2\ind_{\{\xi_k^2\geq m_{n,k}\}}}
&\leq C\sum_{k=0}^{\lfloor y/a_n\rfloor }\frac{c_k^2|1-a_n z|^{2k}}{v(|1-a_nz|)}\EW{\xi_k^2\ind_{\{\xi_k^2\geq m_{n,k}\}}} \\
&\leq C\EW{\xi_0^2\ind_{\{\xi_0^2\geq \tilde{m}_{n}\}}}\sum_{k=0}^{\lfloor y/a_n\rfloor }\frac{c_k^2|1-a_n z|^{2k}}{v(|1-a_nz|)} \\
&\leq C\EW{\xi_0^2\ind_{\{\xi_0^2\geq \tilde{m}_{n}\}}}.
\end{align*}
Since $\E [\xi_0^2] = 1$, it remains to check that $\lim_{n\to\infty} \tilde m_n = +\infty$. In the following let $n$ be sufficiently large, so that, for example,  $|1-a_n z| < 1$. By Lemma~\ref{lemma:v_with_abs_val},
\begin{align*}
\frac 1 {\tilde m_n}
=
\max_{k=0,\dots, \lfloor y/a_n\rfloor } \frac{|1-a_nz|^{2k}c_k^2}{\eps^2 |v(1-a_nz)|}
\leq
C  \frac{\max_{k=0,\dots, \lfloor y/a_n\rfloor } c_k^2}{v(|1-a_nz|)}.
\end{align*}
The definition of $v$, see~\eqref{eq:def_v}, implies
$$
v(|1-a_nz|) \geq \sum_{j=0}^{\lfloor y/a_n\rfloor}|1-a_nz|^{2j} c_j^2 \geq c \sum_{j=0}^{\lfloor y/a_n\rfloor} c_j^2,
$$
because for all $j=0,\ldots, \lfloor y/a_n\rfloor$ we have the estimate $|1-a_nz|^{2j} \geq |1-a_nz|^{2y/a_n} \to \e^{-2y\Re z}$ as $n\to\infty$, hence $\min_{j=0,\ldots,\lfloor y/a_n\rfloor} |1-a_nz|^{2j} > c$ for some $c>0$.
In view of the above estimates, the claim $1/\tilde m_n \to 0$ as $n\to \infty$ becomes a consequence of the following
\begin{lemma}\label{lemma:regvarmaximum}
Let the sequence $(c_k)_{k\in\N_0}$ be as in~\eqref{eq:c_k_reg_var}.
Then,
\begin{equation*}
\lim_{n\to\infty}\frac{\max_{k=0,\dots,n}c_k^2}{c_0^2+\dots+c_n^2}=0.
\end{equation*}
\end{lemma}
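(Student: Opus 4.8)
The plan is to exploit the regular variation hypothesis~\eqref{eq:c_k_reg_var}, which says $c_n^2 = n^{\gamma-1} L(n)/\Gamma(\gamma)$ with $L$ slowly varying and $\gamma > 0$. The denominator $S_n := c_0^2 + \dots + c_n^2$ is a partial sum of a regularly varying sequence of index $\gamma - 1 > -1$, so by Karamata's theorem it is itself regularly varying of index $\gamma$; more precisely $S_n \sim n^\gamma L(n)/\Gamma(\gamma+1)$ as $n \to \infty$. This is the workhorse of the argument. The numerator $M_n := \max_{k=0,\dots,n} c_k^2$ then needs to be shown to be $o(n^\gamma L(n))$, and the quotient $M_n / S_n \to 0$ follows immediately.

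The estimate on $M_n$ is the one point that needs care, because a slowly varying $L$ can oscillate, so $c_k^2$ need not be monotone and $M_n$ need not equal $c_n^2$. I would argue as follows. Fix a small $\delta > 0$. Since $k^{\gamma-1} L(k)$ is regularly varying of index $\gamma - 1$, for any such exponent one has $k^{\gamma-1} L(k) = o(k^{\gamma - 1 + \delta'})$-type control only after absorbing $L$; the clean tool is the \emph{uniform convergence theorem} for regularly varying functions together with the Potter bounds: for any $\delta>0$ there is a constant $C_\delta$ and an index $k_0$ such that $c_k^2 / c_n^2 \leq C_\delta (k/n)^{\gamma - 1 - \delta}$ for all $k_0 \leq k \leq n$ (and the finitely many terms $k < k_0$ are a fixed constant, hence negligible against $S_n \to \infty$ when $\gamma>0$, or handled separately). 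From this, $M_n \leq \max\{\,C_\delta\, c_n^2\, \sup_{k_0 \le k \le n}(k/n)^{\gamma-1-\delta},\ \max_{k<k_0} c_k^2\,\}$. When $\gamma - 1 - \delta \geq 0$ the supremum is attained at $k=n$ and $M_n \le C_\delta c_n^2$ up to the fixed constant; when $\gamma - 1 - \delta < 0$ the supremum is $(k_0/n)^{\gamma-1-\delta} \to 0$ relative to $c_n^2$, which is even better. In all cases $M_n = O(c_n^2) = O(n^{\gamma-1} L(n))$.

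Combining the two estimates,
\[
\frac{M_n}{S_n} \leq \frac{O\l(n^{\gamma-1} L(n)\r)}{\l(\tfrac{1}{\Gamma(\gamma+1)} + o(1)\r) n^{\gamma} L(n)} = O\l(\frac 1 n\r) \ton 0,
\]
which is the claim. (An alternative, fully elementary route avoiding Karamata: split the sum at $n/2$; the tail $\sum_{k=n/2}^n c_k^2 \geq c\, n \cdot n^{\gamma-1} L(n)$ by slow variation of $L$ and the fact that there are $\sim n/2$ roughly-equal terms, which already dominates $M_n$ by a factor growing like $n$.) The main obstacle is precisely the non-monotonicity of $c_k^2$: one must be disciplined about invoking Potter's bounds rather than pretending $\max_{k\le n} c_k^2 = c_n^2$, and one must remember that for $0<\gamma<1$ the sequence $c_k^2$ is actually \emph{decreasing} in its power part, so the maximum behaviour is governed by the slowly varying oscillation and the small-$k$ terms — none of which survives division by the divergent sum $S_n$.
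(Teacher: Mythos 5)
Your approach --- Potter-type bounds for the numerator, Karamata's theorem for the denominator --- is essentially the paper's proof, which uses the equivalent estimate $c_k^2\le k^{\gamma-1+\eps}$ for large $k$ (valid because $L(k)\le k^{\eps}$ eventually) together with $S_n>n^{\gamma-\eps}$. However, your write-up contains a sign slip in the case $\gamma<1$. When $\gamma-1-\delta<0$, the supremum $\sup_{k_0\le k\le n}(k/n)^{\gamma-1-\delta}$ is attained at $k=k_0$ and equals $(k_0/n)^{\gamma-1-\delta}=(n/k_0)^{1+\delta-\gamma}$, which tends to $+\infty$, not to $0$ as you assert. Consequently the claim ``in all cases $M_n=O(c_n^2)=O(n^{\gamma-1}L(n))$'' is false for $\gamma<1$: there $c_n^2\to 0$ while $M_n$ stays bounded away from zero by the early, constant-order terms $c_k^2$, $k$ small. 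The final display $M_n/S_n=O(1/n)$ is therefore not a correct uniform rate either.

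The conclusion is still safe, because for small $\gamma$ your Potter bound actually gives $M_n\le C\,c_n^2\,(n/k_0)^{1+\delta-\gamma}+O(1)=O\bigl(n^{\delta}L(n)\bigr)$, and dividing by $S_n\sim n^{\gamma}L(n)/\Gamma(\gamma+1)$ yields $M_n/S_n=O(n^{\delta-\gamma})\to 0$ once $\delta<\gamma$; alternatively, one can simply note $M_n=O(1)$ and $S_n\to\infty$. Your concluding parenthetical about $0<\gamma<1$ shows you are aware of the issue --- the body of the argument just needs to reflect it, rather than asserting $M_n=O(c_n^2)$. The paper avoids this case split altogether by fixing $0<\eps<\min\{1/2,\gamma\}$ and bounding $M_n\le\max\{C,n^{\gamma-1+\eps}\}$ and $S_n>n^{\gamma-\eps}$ for large $n$, from which $M_n/S_n\le\max\{Cn^{\eps-\gamma},n^{-1+2\eps}\}\to 0$ in one stroke.
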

\begin{proof}
Fix some  $0<\eps < \min \{1/2, \gamma\}$. It follows from~\eqref{eq:c_k_reg_var} that $c_k^2 \leq k^{\gamma-1+\eps}$ for sufficiently large $k\in\N$. Hence,
\begin{equation}\label{eq:est1}
\max_{k=0,\dots,n} c_k^2 \leq \max\{C,  n^{\gamma-1+\eps}\}.
\end{equation}
On the other hand, \eqref{eq:c_k_reg_var} and Karamata's theorem~\cite[Proposition~1.5.8 on p.~26]{bingham_book} imply
\begin{equation}\label{eq:est2}
c_0^2+\dots+c_n^2 \sim \frac{n^\gamma}{\Gamma(\gamma+1)} L(n) >  n^{\gamma -\eps}.
\end{equation}
for sufficiently large $n$.  Taking the quotient of~\eqref{eq:est1} and~\eqref{eq:est2} proves the lemma because $\eps<1/2$ and $\eps<\gamma$.
\end{proof}

\vspace*{2mm}
\noindent
\emph{Proof of~\eqref{eq:Lindeberg2}.}
The claim is a consequence of the following
\begin{lemma}\label{lemma:asymptotik_Fnk_ystat}
For every fixed $y>0$ and every $z\in \mathbb C$ with $\Re z >0$ we have
\begin{equation*}
\lim_{n\to\infty} \sum_{k=\lceil y/a_n \rceil}^\infty \frac{c^2_k|1-a_nz|^{2k}}{|v(1-a_nz)|} =
\frac{(2|z|)^{\gamma}}{\Gamma(\gamma)}\INT{y}{\infty} x^{\gamma-1} \e^{-2(\Re z)x}\D x.
\end{equation*}
\end{lemma}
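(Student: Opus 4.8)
The plan is to recognise the sum as a Riemann sum of mesh $a_n$. Fix $z$ with $\Re z>0$ and write $r_n:=|1-a_nz|$. By~\eqref{eq:lemma:v_with_abs_val} we have $|1-a_nz|=1-a_n\Re z+o(a_n)$, hence $\log r_n=-a_n\Re z+o(a_n)$ and so $r_n^{1/a_n}\to\e^{-\Re z}$; and by Theorem~\ref{theo:abel}, on taking moduli, $|v(1-a_nz)|\sim(2a_n|z|)^{-\gamma}L(1/a_n)$ as $n\to\infty$. Inserting $c_k^2=k^{\gamma-1}L(k)/\Gamma(\gamma)$ and writing $r_n^{2k}=\bigl(r_n^{1/a_n}\bigr)^{2ka_n}$, a short computation gives
\[
\frac{c_k^2r_n^{2k}}{|v(1-a_nz)|}=(1+o(1))\,(2|z|)^{\gamma}\,a_n\,g_n(ka_n),\qquad
g_n(t):=\frac{t^{\gamma-1}}{\Gamma(\gamma)}\,\frac{L(t/a_n)}{L(1/a_n)}\,\bigl(r_n^{1/a_n}\bigr)^{2t},
\]
where the factor $1+o(1)$ does not depend on $k$ since it comes solely from the Abelian asymptotics of $|v(1-a_nz)|$. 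Thus the sum in the lemma equals $(1+o(1))(2|z|)^{\gamma}\sum_{k\ge\lceil y/a_n\rceil}a_ng_n(ka_n)$, the points $ka_n$, $k\ge\lceil y/a_n\rceil$, forming a grid in $[y,\infty)$ of mesh $a_n$, and it remains to prove that this Riemann sum tends to $\int_y^\infty g(t)\,\D t$ with $g(t):=t^{\gamma-1}\e^{-2t\Re z}/\Gamma(\gamma)$, which is precisely the right-hand side of the assertion.

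To justify the Riemann-sum convergence I would split $[y,\infty)=[y,M]\cup(M,\infty)$. On $[y,M]$, the uniform convergence theorem for slowly varying functions~\cite{bingham_book} gives $L(t/a_n)/L(1/a_n)\to1$ uniformly for $t\in[y,M]$ (here $1/a_n\to\infty$), and $\bigl(r_n^{1/a_n}\bigr)^{2t}\to\e^{-2t\Re z}$ uniformly there; hence $g_n\to g$ uniformly on $[y,M]$, $g$ is continuous, and therefore $\sum_{y\le ka_n\le M}a_ng_n(ka_n)\to\int_y^M g$. For the tail, Potter's bounds~\cite{bingham_book} yield, for any fixed $\delta>0$ and all sufficiently large $n$, $L(t/a_n)/L(1/a_n)\le C\max(t^{\delta},t^{-\delta})\le C(1+t^{\delta})$ for every $t\ge y$, while $r_n^{1/a_n}\le\e^{-(\Re z)/2}$ once $n$ is large; consequently
\[
a_ng_n(ka_n)\le C\,a_n\,(ka_n)^{\gamma-1}\bigl(1+(ka_n)^{\delta}\bigr)\e^{-(\Re z)ka_n}\qquad(k\ge\lceil y/a_n\rceil),
\]
whose sum over $k$ is itself a Riemann sum for the convergent integral $C\int_y^\infty t^{\gamma-1}(1+t^{\delta})\e^{-(\Re z)t}\,\D t$; it is therefore bounded uniformly in $n$, and its part over $ka_n>M$ is at most $C\int_M^\infty t^{\gamma-1}(1+t^{\delta})\e^{-(\Re z)t}\,\D t+o(1)$, which tends to $0$ as $M\to\infty$ uniformly in $n$. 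Letting first $n\to\infty$ and then $M\to\infty$ gives $\sum_{k\ge\lceil y/a_n\rceil}a_ng_n(ka_n)\to\int_y^\infty g$, and the lemma follows. Alternatively one can write $\sum_{k\ge\lceil y/a_n\rceil}c_k^2r_n^{2k}=v(r_n)-\sum_{k<\lceil y/a_n\rceil}c_k^2r_n^{2k}$, applying Theorem~\ref{theo:abel} to $v(r_n)$ and running the same Riemann-sum argument on the bounded interval $[0,y]$ for the subtracted partial sum.

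The main obstacle is the uniform tail estimate: one must reconcile the possibly non-monotone slowly varying factor $L$, controllable only through Potter's bounds — which introduce an arbitrarily small auxiliary exponent $\delta$ and hold only once $1/a_n$ exceeds a $\delta$-dependent threshold — with the geometric weight $r_n^{2k}$, whose effective exponential decay rate approaches its limiting value only as $n\to\infty$, forcing the use of a strictly smaller rate in the bound. Producing from these two ingredients a single dominating sequence that is summable uniformly in $n$, with tail uniformly small, is the crux. Everything else — the Abelian asymptotics for $v$, already furnished by Theorem~\ref{theo:abel} and Lemma~\ref{lemma:v_with_abs_val}; the uniform convergence $L(t/a_n)/L(1/a_n)\to1$ on compacts; and the elementary Riemann-sum limits on bounded intervals — is routine.
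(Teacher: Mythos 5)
Your proof is correct and follows essentially the same route as the paper's: rewrite the sum as (a Riemann sum for) an integral, use the Abelian theorem for the normalization $|v(1-a_nz)|$, the asymptotics $|1-a_nz|^{1/a_n}\to\e^{-\Re z}$, and Potter's bounds to control the slowly varying factor and obtain a dominating integrable bound on the tail. The only superficial difference is organizational — the paper packages the argument as a single application of dominated convergence to the step function $g_n(x)=a_n^{-1}|1-a_nz|^{2\lfloor x/a_n\rfloor}c^2_{\lfloor x/a_n\rfloor}/|v(1-a_nz)|$ with dominating function $C\e^{-\frac12(\Re z)x}x^{\gamma-1}\max\{x,1/x\}\ind_{\{x>y/2\}}$, whereas you split into a compact window $[y,M]$ (uniform convergence of $L(t/a_n)/L(1/a_n)$ and of the exponential factor) plus a uniformly small tail over $(M,\infty)$ — but the ingredients are identical and both versions are sound.
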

\begin{proof} 
Keeping the use of the dominated convergence theorem in mind, we write the sum as the integral
\begin{equation*}
\sum_{k=\lceil y/a_n\rceil}^\infty \frac{c^2_k |1-a_nz|^{2k}}{|v(1-a_nz)|}
=\INT{0}{\infty} g_n(x)\D x,
\end{equation*}
where
\begin{equation*}
g_n(x):=\frac{|1-a_nz|^{2 \lfloor x/a_n\rfloor}c^2_{\lfloor x/a_n\rfloor }}{a_n|v(1-a_nz)|} \ind_{\{x> a_n \lceil y/a_n \rceil\}}.
\end{equation*}
We claim that for every fixed $x>0$, $x\neq y$, we have the pointwise convergence
\begin{equation}\label{eq:pointwise}
\lim_{n\to\infty} g_n(x) = \frac {(2|z|)^\gamma}{\Gamma(\gamma)} \e^{-2 (\Re z) x } x^{\gamma-1} \ind_{\{x>y\}}.
\end{equation}
To prove this, observe that
$$
\lim_{n\to\infty} |1-a_n z|^{2\lfloor x/a_n\rfloor}=\e^{-2(\Re z)x}.
$$
Also, by~\eqref{eq:c_k_reg_var} and the slow variation property of $L$,
$$
c^2_{\lfloor x/a_n\rfloor } = \frac{\lfloor x/a_n\rfloor^{\gamma-1}}{\Gamma(\gamma)} L(\lfloor x/a_n \rfloor) \sim \frac{x^{\gamma-1}}{a_n^{\gamma-1} \Gamma(\gamma)} L(1/a_n).
$$
Finally, by Theorem~\ref{theo:abel}, $|v(1-a_n z)| \sim |2a_n z|^{-\gamma} L(1/a_n)$. Taking everything together, we obtain the pointwise convergence stated in~\eqref{eq:pointwise}.

To complete the proof of the lemma, we have to show that the sequence $(g_n)_{n\in\N_0}$ is dominated by an integrable function. For sufficiently large $n$, we have $ \ind_{\{x> a_n \lceil y/a_n \rceil\}} \leq \ind_{\{x>y/2\}}$. Also, Theorem~\ref{theo:abel} implies that $|v(1-a_n z)| \geq  c a_n^{-\gamma} L(1/a_n)$. Furthermore, by~\eqref{eq:c_k_reg_var} we have
$$
c^2_{\lfloor x/a_n\rfloor } \leq C \frac {x^{\gamma-1}}{a_n^{\gamma-1}} L(x/a_n) \leq C \frac {x^{\gamma-1}}{a_n^{\gamma-1}} L(1/a_n) \max\{x,1/x\},
$$
provided $n$ is sufficiently large, where the last step follows from Potter's bound~\cite[Theorem~1.5.6 on p.~25]{bingham_book}.   Finally, \eqref{eq:lemma:v_with_abs_val} and the inequality
$\l(1-y\r)^\alpha \leq \e^{-\alpha y}$, which is valid for all $\alpha>0$ and $y>0$, yield
$$
|1-a_nz|^{2 \lfloor x/a_n\rfloor} \leq \left(1- \frac 12 a_n\Re z\right)^{2 \lfloor x/a_n\rfloor} \leq
\e^{-a_n (\Re z) \lfloor x/a_n \rfloor} \leq \e^{- \frac 12 (\Re z) x }.
$$
Taking all estimates together, we arrive at
$$
g_n(x) \leq C \e^{- \frac 12 (\Re z) x} x^{\gamma-1} \max \{x,1/x\} \ind_{\{x>y/2\}}
$$
for all $n\geq n_0$. Since $\Re z>0$, the dominated convergence theorem can be applied.  This completes the proof of Lemma~\ref{lemma:asymptotik_Fnk_ystat}.
\end{proof}


\vspace*{2mm}
\noindent
\emph{Tightness.} It remains to show that $(X_{a_n})_{n\in\N}$ is a tight sequence on the space $\mathcal H_\R(Q_R)$.
For random analytic functions, there are especially simple criteria of tightness. Namely, by~\cite[Remark on p.~341]{shirai}, it suffices to show that $\E|X_{a_n}(z)|^2 \leq  C$ for all $z\in Q_R$ and all sufficiently large $n\in \N$. But
\begin{align*}
\EW{|X_{a_n}(z)|^2}=\EW{X_{a_n}(z)\overline{X_{a_n}(z)}}
=
\frac{v(|1-a_n z|)}{|v(1-a_n z)|} \leq C
\end{align*}
by Lemma~\ref{lemma:v_with_abs_val}. This completes the proof of Theorem~\ref{theo:FCLT}. \hfill $\Box$

\subsection{Counting the zeroes in the Gaussian case}
In the following lemma we compute the expected number of real zeroes of the Gaussian process $Z_\gamma$, see Theorem~\ref{theo:FCLT}, in an
interval.
\begin{lemma}\label{lemma:nullstellendesgaussprozesses}
Let $N_\infty[a,b]$ be the number of real zeroes of $Z_\gamma$ in the interval $[a,b]\subset (0,\infty)$.
Then
\begin{equation*}
\E N_\infty[a,b]=\frac{\sqrt{\gamma}}{2\pi} \log \frac ba.
\end{equation*}
\end{lemma}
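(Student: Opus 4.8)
The plan is to use the Rice (Kac--Rice) formula for the expected number of real zeroes of a smooth centered Gaussian process, applied to $Z_\gamma$ on $[a,b]\subset(0,\infty)$. Recall that for a $C^1$ centered Gaussian process $(\zeta(t))_{t\in[a,b]}$ with $\zeta(t)$ non-degenerate for each $t$, one has
\begin{equation*}
\E N_\zeta[a,b] = \frac{1}{\pi}\int_a^b \sqrt{\frac{\Var[\zeta'(t)]}{\Var[\zeta(t)]} - \left(\frac{\Cov[\zeta(t),\zeta'(t)]}{\Var[\zeta(t)]}\right)^2}\,\dd t,
\end{equation*}
which can be rewritten compactly as $\frac{1}{\pi}\int_a^b \sqrt{\partial_s\partial_t \log r(s,t)\big|_{s=t}}\,\dd t$, where $r(s,t)=\E[\zeta(s)\zeta(t)]$ is the covariance function. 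So the computation reduces to differentiating the covariance kernel of $Z_\gamma$, which is already recorded in the excerpt.

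First I would invoke the transformation to the stationary process $Y(s)=Z_\gamma(\e^s)$ made right before the corollary: since $t\mapsto \e^s$ is a smooth increasing bijection from $\R$ onto $(0,\infty)$, we have $N_\infty[a,b] = N_Y[\log a,\log b]$ in distribution (and pathwise, after the obvious identification), so $\E N_\infty[a,b] = \E N_Y[\log a,\log b]$. Now $Y$ is stationary with covariance $\rho(h) = (\cosh(h/2))^{-\gamma}$, and for a stationary Gaussian process the Rice formula collapses to the classical Kac formula
\begin{equation*}
\E N_Y[u,v] = \frac{v-u}{\pi}\sqrt{-\rho''(0)},
\end{equation*}
because the stationarity forces $\Cov[Y(t),Y'(t)]=\frac{1}{2}\frac{\dd}{\dd t}\Var[Y(t)]=0$ and $\Var[Y'(t)]=-\rho''(0)$, $\Var[Y(t)]=\rho(0)=1$. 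So I just need $-\rho''(0)$ with $\rho(h)=(\cosh(h/2))^{-\gamma}$. A short Taylor expansion gives $\cosh(h/2) = 1 + h^2/8 + O(h^4)$, hence $(\cosh(h/2))^{-\gamma} = 1 - \gamma h^2/8 + O(h^4)$, so $\rho''(0) = -\gamma/4$ and $\sqrt{-\rho''(0)} = \sqrt{\gamma}/2$. Therefore
\begin{equation*}
\E N_\infty[a,b] = \E N_Y[\log a,\log b] = \frac{\log b - \log a}{\pi}\cdot\frac{\sqrt{\gamma}}{2} = \frac{\sqrt{\gamma}}{2\pi}\log\frac{b}{a},
\end{equation*}
as claimed.

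The only genuine point requiring care — and the place I expect the referee to look hardest — is the justification that the Kac--Rice formula applies: one must check that $Y$ has a version with $C^1$ (indeed real-analytic) sample paths, that $(Y(t),Y'(t))$ has a non-degenerate Gaussian law for each $t$, and that there is no accumulation of zeroes or tangency issue contributing atoms. All of this is available: the excerpt already notes that $Y$ extends analytically to the strip $|\Im z|<\pi/2$, so paths are real-analytic on $\R$; non-degeneracy of $Y(t)$ is $\rho(0)=1>0$ and of $Y'(t)$ is $-\rho''(0)=\gamma/4>0$, and the pair is non-degenerate since $\Cov[Y(t),Y'(t)]=0$ with both variances positive. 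One should also remark that since $Y$ is analytic and not identically zero, its zeroes are almost surely simple (the probability that $Y(t)=Y'(t)=0$ for some fixed $t$ is zero, and a standard argument upgrades this to: a.s.\ no $t$ with $Y(t)=Y'(t)=0$), so the Rice formula gives the expected number of zeroes exactly rather than a lower bound. I would cite a standard reference (e.g.\ Cram\'er--Leadbetter or Aza\"is--Wschebor) for the precise hypotheses and conclude.
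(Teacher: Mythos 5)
Your proof is correct and follows essentially the same route as the paper: transform to the stationary process $Y(s)=Z_\gamma(\e^s)$, compute $\rho''(0)=-\gamma/4$ from the covariance $\rho(h)=(\cosh(h/2))^{-\gamma}$, and apply the Rice formula for stationary Gaussian processes. The paper simply cites Leadbetter et al.\ for the formula rather than spelling out the regularity and non-degeneracy hypotheses as you do, but the substance is identical.
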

\begin{proof}
Recall that $Y(u):=Z(\e^u)$, $u\in\R$, is a stationary, centered Gaussian process with covariance function
$$
\rho(t) := \COV\l[Y(0),Y(t)\r] = \l(\cosh\l(\frac{t}{2}\r)\r)^{-\gamma}.
$$
Note that $\rho''(0) = -\gamma/4$.  If ${N}^*_Y[a^*,b^*]$ denotes the number of real zeroes of $Y$ in an interval $[a^*,b^*]\subset \R$, then
\begin{equation*}
\E N_\infty[a,b]=\E {N}^*_Y[\log a,\log b] = \frac{1}{\pi}\sqrt{-\rho''(0)}\log \frac ba =\frac{\sqrt {\gamma}}{2\pi} \log \frac ba
\end{equation*}
by the Rice formula; see, e.g., \cite[Thm.~7.3.2 on p.~153]{leadbetter_book}, where the formula is stated for the expected number of upcrossings which differs by a factor of $1/2$ from the expected number of zeroes.
\end{proof}

\subsection{Proof of Lemma~\ref{lem:num_zeroes_conv_distr}}
Observe that $N[1-q^{n}, 1-q^{n+1})$ is the number of zeroes of the process $X_{q^n}$ in the interval $(q,1]$. We know from Theorem~\ref{theo:FCLT} that $X_{q^n}$ converges to $Z_\gamma$ weakly on the space $\mathcal H_\R (Q_R)$, where we may take $R>1/q$, so that the interval $(q,1]$ is contained in the interior of the rectangle $Q_R$. By~\cite[Lemma~4.2]{iksanov2016}, the map which assigns to each function in $\mathcal H_\R(Q_R)$ the number of zeroes of this function in the interval $(q,1]$ is locally constant (hence, continuous) on the set of all analytic functions which do not vanish at $q,1$ and have no multiple zeroes in the interval $[q,1]$. This set has full measure w.r.t.\ the law of $Z_\gamma$ (for the a.s.\ absence of multiple zeroes, see~\cite[Lemma~4.3]{iksanov2016}), hence the continuous mapping theorem implies the weak convergence of  $N[1-q^{n}, 1-q^{n+1})$ to the number of zeroes of $Z_\gamma$ in $[q,1]$, as $n\to\infty$. The latter random variable, denoted by $N_\infty [q,1]$, takes values in $\N_0$ and has expectation $- \sqrt{\gamma} (\log q)/(2\pi)$ by Lemma~\ref{lemma:nullstellendesgaussprozesses}. \hfill $\Box$

\section{Boundedness of the expected number of zeroes}
\subsection{Decomposition of \texorpdfstring{$\E N_n$}{E Nn}}
Let $N_n := N_n[q,1] = N[1-q^{n}, 1-q^{n+1}]$ be the number of zeroes of $X_{q^n}$ in $[q,1]$, or equivalently, the number of zeroes of $f$ in $[1-q^n, 1-q^{n+1}]$.
The aim of this section is to prove Lemma~\ref{lem:num_zeroes_exp_bounded} which states that for all $1<\kappa < 2$,
$$
\sup_{n\geq n_0} \E N_n^\kappa < \infty.
$$
Because of the decomposition
\begin{equation*}
\E N_n^\kappa=\EW{N_n^\kappa\ind_{\l\{N_n\leq q^{-n^2}\r\}}}+\EW{N_n^\kappa\ind_{\l\{N_n> q^{-n^2}\r\}}},
\end{equation*}
the statement immediately follows from the following two lemmas:

\begin{lemma}\label{lemma:Nngreaterlambda}
Fix $q\in(0,1)$ and $1< \kappa < 2$. There exists $n_0\in\N$ such that
\begin{equation*}
\sup_{n\geq n_0} \EW{N_n^\kappa\ind_{\l\{N_n>q^{-n^2}\r\}}} < \infty.
\end{equation*}
\end{lemma}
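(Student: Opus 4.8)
The plan is to estimate the contribution of the rare event $\{N_n > q^{-n^2}\}$ by combining a crude deterministic/Jensen-type bound on $N_n$ on this event with a sufficiently strong tail bound $\P{N_n > q^{-n^2}}$. First I would make the event itself do most of the work: on $\{N_n > q^{-n^2}\}$ the function $f$ has an enormous number of real zeroes in $[1-q^n, 1-q^{n+1}]$. By Jensen's formula (or the argument principle), a function analytic on a disc of radius $r'<1$ with $M$ zeroes inside a smaller disc cannot have $M$ too large compared with $\log \max_{|z|=r'} |f(z)| - \log|f(z_0)|$ for a suitably chosen center $z_0$. Concretely, fix $r' \in (1-q^{n+1}, 1)$, say $r' = 1 - q^{n+2}$; then
\[
N_n \le N[0, r'] \le C \left( \log \max_{|z| = r''} |f(z)| + |\log |f(s_n)|| \right)
\]
for an appropriate radius $r'' < 1$ and an appropriate reference point $s_n$ (e.g.\ $s_n$ near $0$, where $|f|$ is unlikely to be small since $f(0) = \xi_0 c_0$ — but recall $c_0 = 0$, so one must instead pick $s_n$ at a fixed small positive value like $1/2$). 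So the event $\{N_n > q^{-n^2}\}$ forces either $\log\max_{|z|=r''}|f(z)|$ to be at least of order $c\, q^{-n^2}$, or $|\log|f(s_n)||$ to be at least of order $c\, q^{-n^2}$, i.e.\ $|f(s_n)|$ exponentially (in $q^{-n^2}$) small.

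Next I would bound the probabilities of these two events. For the maximum modulus: $\max_{|z|=r''}|f(z)| \le \sum_k |\xi_k| c_k (r'')^k$, and by Markov's inequality together with $\E|\xi_k| \le 1$ and the regular variation of $c_k^2$ (so $\sum_k c_k (r'')^k$ is at worst polynomially large in $(1-r'')^{-1} = q^{-n-2}$, hence at worst exponential in $n$), we get
\[
\P{\log \max_{|z|=r''}|f(z)| \ge c\, q^{-n^2}} \le \exp(-c'\, q^{-n^2} + Cn) \le \exp(-c'' q^{-n^2}/2)
\]
for large $n$. For the small-value event $\{|f(s_n)| \le \exp(-c\,q^{-n^2})\}$: here I would use a Carleman/anti-concentration bound. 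The cleanest route is to note $f(s_n) = \xi_1 c_1 s_n + (\text{indep.\ of } \xi_1)$, so conditioning on everything except $\xi_1$, the probability that $|f(s_n)|$ lands in any fixed interval of length $2\delta$ is at most $\sup_t \P{|\xi_1 - t| \le \delta/(c_1 s_n)}$, which by non-degeneracy of $\xi_0$ tends to something $<1$ as $\delta \to 0$ but does not directly give the required smallness. A better approach: since later parts of the paper (Lemma~\ref{lemma:abschaetzungdmeins}) establish precisely that $|X_{q^n}(s)|$ is not too small, I would instead cite/bootstrap such an anti-concentration estimate, or — more self-containedly — use the Kolmogorov–Rogozin / Esseen concentration inequality applied to the sum $\sum_k \xi_k c_k s_n^k$, whose characteristic function decays because enough of the $c_k s_n^k$ are bounded away from $0$, yielding $\P{|f(s_n)| \le \eta} \le C\eta$ for all $\eta>0$. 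Plugging $\eta = \exp(-c q^{-n^2})$ gives a bound that is super-exponentially small in $q^{-n^2}$.

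Finally I would assemble the pieces. On $\{N_n > q^{-n^2}\}$ we have $N_n^\kappa \le N[0,r']^\kappa$, and I would use a crude deterministic envelope: $N[0,r'] \le C(\log M_n + |\log|f(s_n)||)$ where $M_n = \max_{|z|=r''}|f(z)|$, so $N_n^\kappa \le C(\log M_n)^\kappa + C|\log|f(s_n)||^\kappa$ on this event. Then
\[
\EW{N_n^\kappa \ind_{\{N_n > q^{-n^2}\}}} \le C\, \EW{(\log M_n)^\kappa \ind_{\{N_n > q^{-n^2}\}}} + C\, \EW{|\log|f(s_n)||^\kappa \ind_{\{N_n > q^{-n^2}\}}},
\]
and each term is controlled by Cauchy–Schwarz (splitting off the indicator) against the super-exponentially small probability bounds above, provided $\E (\log M_n)^{2\kappa}$ and $\E|\log|f(s_n)||^{2\kappa}$ grow only polynomially in $n$ — which follows from $\E M_n^p < \infty$ for all $p$ (moments of a convergent weighted sum of mean-zero variables, using $\kappa<2$ so $2\kappa$-th moments are not needed of $\xi$; only second moments are) and from $\P{|f(s_n)| \le \eta} \le C\eta$ integrated against $|\log \eta|^{2\kappa}$. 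Since the probability bounds decay like $\exp(-c q^{-n^2})$, the whole expression is not merely bounded but tends to $0$, so the supremum over $n \ge n_0$ is finite.

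The main obstacle I anticipate is the anti-concentration / lower bound on $|f(s_n)|$ — i.e.\ ruling out that $f$ is freakishly small at the reference point — since for general (even discrete) $\xi_0$ this is exactly the Bernoulli-convolution-type difficulty flagged in the introduction; the saving grace is that we only need a polynomial-in-$\eta$ small-ball bound $\P{|f(s_n)|\le\eta}\le C\eta$ at a \emph{fixed} point $s_n$ bounded away from both $0$ and $1$, which is soft (one coordinate $\xi_k$ with $c_k s_n^k$ bounded below already gives it via conditioning, using that $\xi_0$ has a density component or, in general, via the Esseen inequality after truncation), far weaker than what is needed near $s=1$.
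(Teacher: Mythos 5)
Your overall strategy (control $N_n$ via Jensen's formula in terms of $\log\max|f|$ and $-\log$ of $|f|$ at a reference point, then use a tail/probability bound on the rare event) shares its skeleton with the paper, but the way you handle the reference point is genuinely different and this is where a real gap appears.

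The paper does \emph{not} use any anti-concentration at a fixed point. Instead it partitions the probability space by the index $k$ of the first coefficient $\xi_k$ with $|\xi_k| > \e\eta$ (events $B_k$ with $\P{B_k}=p^k(1-p)$), applies Rolle and Jensen to $f^{(k)}$ on $B_k$ so that the reference quantity is $|f^{(k)}(0)| = k!|c_k||\xi_k| \geq k!|c_k|\eta$ — a \emph{deterministic} lower bound on the event — and pays for this with the geometrically small $\P{B_k}$. This elegantly avoids any small-ball estimate. Your plan instead requires $\P{|f(s_n)|\le\eta}$ to decay at a quantified rate at a fixed interior point $s_n$, and this is where it breaks down. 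The bootstrap from Lemma~\ref{lemma:abschaetzungdmeins} does not work: that lemma gives $\P{|X_{q^n}(1)| \le T} \le C(T + T^{-1/2}\e^{-n^2/8})$, and with $T=\exp(-cq^{-n^2})$ the term $T^{-1/2}\e^{-n^2/8}=\exp(cq^{-n^2}/2-n^2/8)$ diverges, so the bound is vacuous at the scale you need. Your other suggestion — Kolmogorov–Rogozin/Esseen on $\sum_k\xi_k c_k s_n^k$ — does not give $C\eta$ either: with $c_k s_n^k$ decaying geometrically (so only $\sim\log(1/\eta)$ terms exceed $\eta$), the off-the-shelf Rogozin bound yields only $Q(f(s_n),\eta)\le C/\sqrt{\log(1/\eta)}$. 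That rate is too slow for your step (a): you need $\E|\log|f(s_n)||^{2\kappa}<\infty$, and with $\P{|f(s_n)|\le\eta}\gtrsim 1/\sqrt{\log(1/\eta)}$ the boundary term $|\log\eta|^{2\kappa}\,\P{|f(s_n)|\le\eta}\sim|\log\eta|^{2\kappa-1/2}$ already diverges as $\eta\downarrow 0$. (The one-coordinate conditioning you try first gives, as you say yourself, only a bound strictly less than one, not a decay.) A genuinely polynomial small-ball bound at a fixed point, uniformly over all non-degenerate mean-zero unit-variance $\xi_0$, is exactly the Bernoulli-convolution-type difficulty you flag; it is not ``soft'' and would need a real argument (e.g.\ iterating the conditioning over a growing number of coordinates), which you have not supplied. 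There is also a minor geometric slip: with base point $s_n=1/2$ the Jensen radii must satisfy $r''<1-s_n=1/2$, so the setup must be checked carefully so that the annulus still contains $[1-q^n,1-q^{n+1}]$ and the ratio $r''/r'$ gives the $Cq^{-n}$ constant; this is fixable, but the anti-concentration gap is the essential obstruction. In short, the idea is reasonable, but the piece you defer — a quantitative small-ball estimate at a fixed point — is precisely the hard part, and the paper's $B_k$ decomposition is designed to avoid needing it.
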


\begin{lemma}\label{lemma:Nngreaterlambda2}
For all $q\in(\e^{-1/32},1)$ and $1<\kappa < 2$,
\begin{equation*}
\sup_{n\in\N} \EW{N_n^\kappa\ind_{\l\{N_n\leq q^{-n^2}\r\}}} < \infty.
\end{equation*}
\end{lemma}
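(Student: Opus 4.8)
The plan is to control $N_n = N_n[q,1]$ on the event $\{N_n \le q^{-n^2}\}$ by combining two ingredients: an upper bound on the number of real zeroes of an analytic function in terms of its values on a slightly larger complex domain, and lower bounds (with high probability) on $|X_{q^n}|$ at suitably many points of $[q,1]$. First I would recall that $X_{q^n}$ extends analytically to the rectangle $Q_R$ (for $R>1/q$), and invoke a Jensen-type / argument-principle estimate: if $g$ is analytic on a disk $D(z_0,2\delta)\subset Q_R$ and does not vanish at the center, then the number of zeroes of $g$ in $D(z_0,\delta)$ is at most $\log\big(\max_{|z-z_0|=2\delta}|g(z)|/|g(z_0)|\big)/\log 2$. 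Covering $[q,1]$ by $O(1)$ such disks reduces the problem to showing that $\max_{Q_R}|X_{q^n}|$ is not too large and $|X_{q^n}(s_i)|$ is not too small at a few fixed points $s_i\in[q,1]$.

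The upper bound on $\max_{Q_R}|X_{q^n}|$ is the easy part: by subharmonicity $\max_{Q_R}|X_{q^n}| \le \max_{Q_{R'}}|X_{q^n}|$ for $R<R'$, and one estimates $\E \max_{Q_{R'}}|X_{q^n}|^2$ via the mean-value inequality and $\E|X_{q^n}(z)|^2 \le C$ (Lemma~\ref{lemma:v_with_abs_val}); a union bound over a polynomial-size mesh then gives, say, $\P{\max_{Q_R}|X_{q^n}| > q^{-cn^2}} \le q^{cn^2}$ for suitable small $c$ and large $n$, so this event contributes negligibly to the expectation even after multiplication by the bound $q^{-\kappa n^2}$ on $N_n^\kappa$ on the event in question. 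The genuinely delicate part — and what I expect to be the main obstacle — is the anticoncentration statement: for fixed $s\in[q,1]$, one needs $\P{|X_{q^n}(s)| < \e^{-cn}}$ (or a comparable quantitative bound) to be small enough that, after summing the geometric-type contributions $\sum_m \e^{-cn m}\,\P{\ldots}$ coming from dyadic levels of $1/|X_{q^n}(s_i)|$ against the crude count $N_n \le q^{-n^2}$, everything remains bounded uniformly in $n$. This is exactly the point flagged in the paper's overview as being as hard as questions about Bernoulli convolutions; I would handle it by writing $X_{q^n}(s) = f(1-q^n s)/\sqrt{v(1-q^n s)}$ as a normalized sum of the independent terms $\xi_k c_k (1-q^ns)^k$, isolating a block of $\asymp q^{-n}$ indices $k$ on which the summands have comparable, non-negligible variance, and applying a Kolmogorov–Rogozin / Esseen-type small-ball inequality for sums of independent random variables (using only non-degeneracy of $\xi_0$), which yields $\P{|X_{q^n}(s)|<t} \le C(t + (\text{number of effective terms})^{-1/2}) \le C(t + \e^{-cn})$. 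It is here that the restriction $q > \e^{-1/32}$ is used, to make $-\log q$ small enough that the exponential gain $\e^{-cn}$ in the anticoncentration bound beats the exponential loss $q^{-\kappa n^2}$; one balances the two exponents and checks that the series over $n$ converges.

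Assembling the pieces: on $\{N_n \le q^{-n^2}\}$, outside an event of probability $\le q^{cn^2}$ one has both $\max_{Q_R}|X_{q^n}|\le q^{-cn^2}$ and $\min_i |X_{q^n}(s_i)|\ge \e^{-cn}$ for the finitely many covering centers $s_i$, hence by the argument-principle estimate $N_n \le C(n^2|\log q| + n) \le C n^2$, giving $\E[N_n^\kappa \ind_{\{N_n\le q^{-n^2}\}}, \text{good}] \le C n^{2\kappa}$ — wait, that is not bounded; so in fact one must be more careful and use the full dyadic decomposition $N_n^\kappa \le \sum_{j\ge 0} 2^{(j+1)\kappa}\,\ind_{\{2^j \le N_n < 2^{j+1}\}}$, bounding $\P{N_n \ge 2^j,\ N_n \le q^{-n^2}}$ by the probability that some $|X_{q^n}(s_i)|$ is as small as $2^{-cj}$ (when $j \lesssim n^2$) or by $q^{cn^2}$ (when $j \gtrsim n^2$), and then $\sum_j 2^{j\kappa}\big(C 2^{-cj} + C\e^{-cn}\big) + (\text{tail})$ is summable in $j$ and bounded uniformly in $n$ precisely because $\kappa<2$ and $c$ can be taken of order $1/|\log q| \gg \kappa$ when $q>\e^{-1/32}$. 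This last rebalancing — tracking how the small-ball exponent $c$, the truncation level $n^2$, and the moment $\kappa<2$ interact — is the technical heart, and is where I expect the proof in the paper to expend most of its effort via the auxiliary Lemmas~\ref{lemma:abschaetzungdmeins} and~\ref{lemma:Nngreaterlambda} referenced in the text.
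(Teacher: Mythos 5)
Your plan diverges from the paper's proof in two essential places, and in both cases the gap is substantive rather than a matter of detail.

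First, the mechanism for bounding $\P{N_n\geq m}$. You propose a Jensen-formula / argument-principle bound, $N_n\leq \log\bigl(\max|X_{q^n}|/\min_i|X_{q^n}(s_i)|\bigr)/\log 2$. This is vacuous for small and moderate $m$: the typical ratio $\max/\min$ is at least polynomial in $q^{-n}$, so the resulting deterministic bound $N_n\lesssim n^2$ gives, as you yourself observe, an unbounded $\E N_n^\kappa$. Your dyadic fix does not repair this: writing $N_n\geq 2^j$, the Jensen bound forces $\min_i|X_{q^n}(s_i)|\leq \max\cdot 2^{-c\,2^j}$ (doubly exponential in $j$, not $2^{-cj}$ as you wrote), and for $j\lesssim\log_2 n^2$ this is again vacuous, so $\sum_{j\lesssim\log_2 n^2}2^{j\kappa}\cdot 1\asymp n^{2\kappa}$ resurfaces. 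The paper sidesteps this entirely: its Lemma~\ref{lemma:abschaetzungdmzwei} is a Rolle's-theorem and multiple-integration estimate (if $X_{q^n}$ has $m$ zeroes, then writing $X_{q^n}(1)$ as an $m$-fold iterated integral of $X_{q^n}^{(m)}$ between vanishing points gives $\P{D_m^{(n)}\cap\{|X_{q^n}(1)|\geq T\}}\leq [(\beta-\alpha)^m/(m!T)]^2\sup\E|X_{q^n}^{(m)}|^2$), and the Cauchy-formula moment bound of Lemma~\ref{lemma:abschaetzungrandomanalyticfunktion} exactly cancels the $(m!)^2$ and $(\beta-\alpha)^{2m}$, leaving a genuine geometric decay $\bigl(\tfrac{2q}{3-q}\bigr)^{2m}$ valid for \emph{all} $m\geq 1$. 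This geometric decay at small $m$ is the key structural ingredient missing from the argument-principle route. It also means only one anticoncentration point, $s=1$, is needed, rather than a covering.

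Second, the strength of the anticoncentration bound. Your proposed Kolmogorov--Rogozin / Esseen-type bound gives a floor of order $(\text{effective terms})^{-1/2}\asymp q^{n/2}=\e^{-cn}$. This is not strong enough: the crude cutoff $N_n\leq q^{-n^2}$ forces you to sum $m^{\kappa-1}$ up to $m\asymp q^{-n^2}$, contributing $q^{-\kappa n^2}$, and $q^{-\kappa n^2}\e^{-cn}\to\infty$. The paper's Lemma~\ref{lemma:abschaetzungdmeins} gets the crucial $\e^{-n^2/8}$ (note the $n^2$, not $n$) by smoothing $X_{q^n}(1)$ with an auxiliary variable $\Theta_\lambda$ (a sum of two uniforms on $[-\lambda,\lambda]$), doing the Esseen integral in Fourier space with a partition $\{\Gamma_k\}$ adapted to the decreasing rearrangement $(b_{n,k}^2)$ of the coefficients, and then \emph{optimizing over $\lambda$ as a function of $n$}, choosing $\lambda=T^{3/4}\e^{-n^2/16}$. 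The price is the $T^{-1/2}$ prefactor, which is why the final bound reads $\P{|X_{q^n}(1)|\leq T}\leq C(T+T^{-1/2}\e^{-n^2/8})$. It is precisely this $\e^{-n^2/8}$ versus $q^{-2n^2}$ balance that produces the condition $q>\e^{-1/32}$; with only an $\e^{-cn}$ floor, no choice of $q<1$ can save the argument. So the restriction $q>\e^{-1/32}$ is not, as you suggest, about making $c/|\log q|$ large; it is about making $2|\log q|<1/16$.

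Your overall architecture — control the number of zeroes via analyticity plus anticoncentration, then sum against the crude truncation — is in the right spirit and does identify the anticoncentration as the hard part, but both the zero-counting lemma (Jensen vs.\ Rolle) and the anticoncentration exponent ($\e^{-cn}$ vs.\ $\e^{-cn^2}$) are not strong enough, and fixing each of them requires the specific devices the paper introduces (multiple integration plus Cauchy-formula moment bounds, and $n$-dependent smoothing together with the rearrangement estimates of Section~\ref{sec:rearrangement}).
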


\subsection{Proof of Lemma~\ref{lemma:Nngreaterlambda}}
Let us assume that $c_k\neq 0$ for all $k\in\N_0$, postponing the general case until the end of the proof.
Since $\P{\xi_0=0} \neq 1$, we can choose a sufficiently small $0<\eta<1$ such that
\begin{equation*}
p:=\P{|\xi_0|\leq \e\eta}<1.
\end{equation*}
For $k=0,1,\dots $ consider the events
\begin{equation*}
B_k:=\{|\xi_0|\leq \e \eta,\dots,|\xi_{k-1}|\leq \e \eta, |\xi_k|>\e \eta\}.
\end{equation*}
Keep in mind that
\begin{equation*}
\quad \P{B_k}=p^{k}(1-p) \quad \textrm{and}\quad \bigcup_{k\in\N_0}B_k=\Omega \mod \mathbb P,
\end{equation*}
where $(\Omega, \mathcal A, \mathbb P)$ is the probability space we are working on.
On the event $B_k$ one has
\begin{equation*}
|f^{(k)}(0)|=k!|c_k||\xi_k| \geq k!|c_k|\e \eta \geq k!|c_k|\eta.
\end{equation*}
Abbreviate $\tilde{N}_n=N[0,1-q^{n+1}]$. The theorems of Rolle and Jensen (for the latter, see, e.g.~\cite[pp.~280--281]{conway_book}) applied to $f^{(k)}$ yield on the event $B_k$ that
\begin{align}
\label{eq:estimateNnBk}
\begin{split}
N_n\leq\tilde{N}_n&\leq k+ \frac 1 {\log\l(\frac{R_n}{r_n}\r)}\log\l(\frac{\sup_{|z|=R_n}{|f^{(k)}(z)|}}{|f^{(k)}(0)|}\r) \\
&\leq k+Cq^{-n}\log\l(\frac{1}{\eta}\sum_{j=k}^\infty \binom{j}{k}\l|\frac{c_j}{c_k}\r| |\xi_j| R_n^{j-k}\r),
\end{split}
\end{align}
where we have chosen
\begin{equation*}
r_n:=1-q^{n+1}\in(0,1) \quad \textrm{and} \quad R_n:=1-q^{n+2}\in(0,1).
\end{equation*}
Besides, on $B_k$ we have
\begin{equation}\label{eq:logquadrat_concave}
\frac{1}{\eta}\sum_{j=k}^\infty \binom{j}{k}\l|\frac{c_j}{c_k}\r| |\xi_j| R_n^{j-k}
\geq \frac{|\xi_k|}{\eta}\geq  \e.
\end{equation}
On the other hand, we shall show in Lemma~\ref{lem:uniform_abel}, below, that there is $C>0$ such that
\begin{equation}\label{eq:uniform_abel}
\sum_{j=k}^{\infty} \binom{j}{k}\left|\frac{c_j}{c_k}\right|R_n^{j-k}
\leq \e^{C n(k+1)}
\end{equation}
for all $k\in\N_0$ and $n\geq n_0$. 
Using the same idea as in the standard proof of the Markov inequality leads to the estimate
\begin{multline*}
\EW{N_n^\kappa\ind_{\l\{N_n> q^{-n^2}\r\}\cap B_k}}
=
\EW{\frac{N_n^2}{N_n^{2-\kappa}}\ind_{\l\{N_n> q^{-n^2}\r\}\cap B_k}}
\\\leq
\EW{\frac{N_n^2}{q^{-n^2 (2-\kappa)}}\ind_{\l\{N_n> q^{-n^2}\r\}\cap B_k}}
\leq q^{n^2(2-\kappa)} \EW{N^2_n\ind_{B_k}}
\end{multline*}
which holds for all $k=0,1,\ldots$. The inequality
$(a+b)^2\leq 2a^2+2b^2$,
for all $a,b\in\R$, combined with~\eqref{eq:estimateNnBk} allows us to conclude that
\begin{align}\label{eq:est_one}
\EW{N_n^\kappa\ind_{\l\{N_n> q^{-n^2}\r\}\cap B_k}}
\leq Cq^{n^2(2-\kappa)}\l(k^2\P{B_k} +q^{-2n}\EW{\log^2\l(\frac{1}{\eta}\sum_{j=k}^\infty \binom{j}{k} |\xi_j|\l|\frac{c_j}{c_k}\r| R_n^{j-k}\r)\ind_{B_k}}\r).
\end{align}
Since the function $x\mapsto\log^2(x)$ is concave for $x\geq \e$ and in view of~\eqref{eq:logquadrat_concave}, we may use the inequality of Jensen (on the event $B_k$) to obtain the estimate
$$
\EW{\log^2\l(\frac{1}{\eta}\sum_{j=k}^\infty \binom{j}{k} |\xi_j|\l|\frac{c_j}{c_k}\r| R_n^{j-k}\r)\ind_{B_k}}
\leq \P{B_k} \log^2\l(\frac{1}{\P{B_k}}\EW{\frac{1}{\eta}\sum_{j=k}^\infty \binom{j}{k} |\xi_j|\l|\frac{c_j}{c_k}\r| R_n^{j-k}\ind_{B_k}}\r).
$$
Treating the term with $j=k$ separately, using the independence of $(\xi_k)_{k\in\N_0}$, the observation $\EW{|\xi_j|\ind_{B_k}} = \P{B_k} \E |\xi_1| \leq \P{B_k}$ for $j>k$, and~\eqref{eq:uniform_abel},  we obtain for sufficiently large $n$ the estimate
\begin{align}\label{eq:est_two}
\begin{split}
&\EW{\log^2\l(\frac{1}{\eta}\sum_{j=k}^\infty \binom{j}{k} |\xi_j|\l|\frac{c_j}{c_k}\r| R_n^{j-k}\r)\ind_{B_k}} \\
&\leq \P{B_k}\log^2\l(\frac{\EW{|\xi_k|\ind_{B_k}}}{\eta\P{B_k}}+\frac{1}{\P{B_k}}\EW{\frac{1}{\eta}\sum_{j=k+1}^\infty \binom{j}{k} |\xi_j|\l|\frac{c_j}{c_k}\r| R_n^{j-k}\ind_{B_k}}\r) \\
&\leq \P{B_k}\log^2\l(\frac{\EW{|\xi_k|\ind_{|\xi_k|>\e\eta}\prod_{l=0}^{k-1}\ind_{|\xi_l|\leq \e \eta}}}{\eta p^k(1-p)}+\frac{1}{\eta}\sum_{j=k+1}^\infty \binom{j}{k} \l|\frac{c_j}{c_k}\r| R_n^{j-k}\r) \\
&\leq \P{B_k}\log^2\l(\frac{\EW{|\xi_k|\ind_{|\xi_k|>\e\eta}}}{\eta (1-p)}+\frac{1}{\eta}\e^{Cn(k+1)}\r) \\
&\leq \P{B_k}\log^2\l(\frac{2}{\eta}\e^{Cn(k+1)}\r) \\
&\leq C\P{B_k} n^2(k+1)^2.
\end{split}
\end{align}
Taking together the above estimates \eqref{eq:est_one} and \eqref{eq:est_two}, we obtain
\begin{equation*}
\EW{N_n^\kappa\ind_{\l\{N_n>q^{-n^2}\r\}}\ind_{B_k}}
\leq Cq^{n^2(2-\kappa)}\l(k^2\P{B_k}+ q^{-2n}\P{B_k} n^2(k+1)^2\r).
\end{equation*}
Therefore, taking the sum over $k=0,1,\ldots$, we arrive at
\begin{align*}
\EW{N_n^\kappa\ind_{\l\{N_n>q^{-n^2}\r\}}}
&=\sum_{k=0}^\infty \EW{N_n^\kappa\ind_{\l\{N_n>q^{-n^2}\r\}}\ind_{B_k}}\\
&\leq C\l(q^{n^2(2-\kappa)}\sum_{k=0}^\infty k^2\P{B_k} + q^{n^2(2-\kappa)/2}\sum_{k=0}^\infty (k+1)^2 \P{B_k} \r)\\
&\leq C\l(\sum_{k=0}^\infty k^2 p^k(1-p) + \sum_{k=0}^\infty (k+1)^2
p^k(1-p) \r),
\end{align*}
which is a finite constant. This completes the proof of Lemma~\ref{lemma:Nngreaterlambda} in the case when $c_k\neq 0$ for all $k\in\N_0$.
Let us stress that in the above proof $q^{-n^2}$ could be replaced by, say, $q^{-An}$ with sufficiently large $A>0$.

Let us finally consider the case in which some of the $c_k$'s may vanish. It follows from~\eqref{eq:c_k_reg_var} that $c_k\neq 0$ for all but finitely many $k$. Let $K\in\N_0$ be such that $c_k\neq 0$ for $k\geq K$. By Rolle's theorem, $N_n \leq N_n' + K$, where $N_n'$ is the number of zeroes of $f^{(K)}$, the $K$-th derivative of $f$, in the interval $[1-q^n, 1-q^{n+1}]$. It follows that
\begin{equation}\label{eq:N_n_vs_N_n_prime}
\EW{N_n^\kappa \ind_{\l\{N_n > q^{-n^2}\r\}}} \leq \EW{ (N_n' + K)^\kappa \ind_{\l\{N_n'+K > q^{-n^2}\r\}}} \leq C K^\kappa + C\EW{(N_n')^\kappa \ind_{\l\{N_n'> q^{-n^2}/2\r\}}}
\end{equation}
if $n$ is sufficiently large, where in the last step we used the inequality $(a+b)^\kappa \leq 2^{\kappa-1}(a^\kappa + b^\kappa)$. Now,
$$
f^{(K)}(z) = \sum_{k=0}^\infty \xi_{k+K} c_{k+K}  (k+K)(k+K-1)\ldots (k+1) z^k =:  \sum_{k=0}^\infty \xi_{k+K} c_k' z^k
$$
has the same form as $f$, and the coefficients $c_k'$ satisfy~\eqref{eq:c_k_reg_var} with $\gamma$ replaced by $\gamma+2K$, while being non-zero. Applying the above proof to $f^{(K)}$ with $q^{-n^2}$ replaced by $q^{-n^2}/2$, we obtain
$$
\sup_{n\geq n_0} \EW{(N_n')^\kappa \ind_{\l\{N_n'> q^{-n^2}/2\r\}}} < \infty.
$$
Together with~\eqref{eq:N_n_vs_N_n_prime}, this yields the required statement. \hfill $\Box$

\subsection{Proof of Lemma~\ref{lemma:Nngreaterlambda2}}
In the following we may assume that $n$ is sufficiently large.
Fix $q\in(\e^{-1/32},1)$ and $1<\kappa<2$. We are going to show that
\begin{equation*}
\sup_{n\in\N} \EW{N_n^\kappa\ind_{\l\{N_n\leq q^{-n^2}\r\}}} < \infty.
\end{equation*}
For $m=0,1,\ldots$ let  $D_m^{(n)}$ denote the event that $X_{q^n}$ has at least $m$ zeroes in the interval $[q,1]$.
Keep in mind that $D^{(n)}_0 \supset D^{(n)}_1\supset \ldots$.
The following crucial estimate will be stated and proved in Lemma~\ref{lemma:abschaetzungdmn} below:
\begin{equation}\label{eq:lemma:abschaetzungdmn}
\P{D^{(n)}_m}\leq C\l(\l(\frac{2q}{3-q}\r)^{2m/3}+\l(\frac{2q}{3-q}\r)^{-m/3}\EXP{-\frac{n^2}{8}}\r).
\end{equation}
For every sufficiently large $n\in\N$ there exists a $k_0=k_0(n,q)\in \N$ such that
\begin{equation}\label{eq:def_knull}
\EXP{\frac{n^2}{32}}\leq \l(\frac{2q}{3-q}\r)^{-k_0/3} \leq \EXP{\frac{n^2}{16}}.
\end{equation}
From \eqref{eq:def_knull} one has
\begin{equation}\label{eq:def_knull_folgerung}
\l(\frac{2q}{3-q}\r)^{2k_0/3} \leq \EXP{-\frac{n^2}{16}}.
\end{equation}
Using the inclusions $D^{(n)}_0 \supset D^{(n)}_1\supset \ldots$ we have
\begin{multline*}
\EW{N_n^\kappa\ind_{\l\{N_n\leq q^{-n^2}\r\}}}
\leq \EW{N_n^2\ind_{\l\{N_n\leq q^{-n^2}\r\}}}
= \sum_{k=1}^{\lfloor  q^{-n^2} \rfloor} (2k-1)\P{D_k^{(n)}} \\
\leq \sum_{k=1}^{k_0}(2k-1)\P{D_k^{(n)}}
+\sum_{k=k_0+1}^{\lfloor q^{-n^2} \rfloor} (2k-1)\P{D^{(n)}_{k_0}}.
\end{multline*}
Note that $2q/(3-q)\leq 1$. Applying~\eqref{eq:lemma:abschaetzungdmn},\eqref{eq:def_knull} and \eqref{eq:def_knull_folgerung} to the right-hand side, we obtain
\begin{align*}
\EW{N_n^\kappa\ind_{\l\{N_n\leq q^{-n^2}\r\}}} &\leq C \l(\sum_{k=1}^{k_0}(2k-1)\l(\frac{2q}{3-q}\r)^{2k/3}+\EXP{-\frac{n^2}{8}}\sum_{k=1}^{k_0}(2k-1)\l(\frac{2q}{3-q}\r)^{-k/3}\r) \\
&\qquad  +C q^{-2n^2}\l(\l(\frac{2q}{3-q}\r)^{2k_0/3}+\EXP{-\frac{n^2}{8}}\l(\frac{2q}{3-q}\r)^{-k_0/3}\r) \\
&\leq C \l(\sum_{k=1}^{\infty}(2k-1)\l(\frac{2q}{3-q}\r)^{2k/3}+\EXP{-\frac{n^2}{16}} k_0^2+2q^{-2n^2}\EXP{-\frac{n^2}{16}}\r),
\end{align*}
which is bounded above by $C$ since $\e^{-1/32}< q<1$ by assumption and $k_0< Cn^2$ by~\eqref{eq:def_knull}. This completes the proof of Lemma~\ref{lemma:Nngreaterlambda2} modulo the estimate~\eqref{eq:lemma:abschaetzungdmn}. \hfill $\Box $


\subsection{Probability that \texorpdfstring{$f$}{f} has at least \texorpdfstring{$m$}{m} zeroes in \texorpdfstring{$[1-q^n, 1-q^{n+1}]$}{[1-qn, 1-q(n+1)]}}
In this section we prove the estimate for the probability of the event $D_m^{(n)}$ that $X_{q^n}$ has at least $m$ zeroes in the interval $[q,1]$. This estimate was used in the proof of Lemma~\ref{lemma:Nngreaterlambda2}.
\begin{lemma}\label{lemma:abschaetzungdmn}
Let $q\in(-2+\sqrt{7},1)$.  There exist constants $C>0$, $n_0\in\N$ such that for all $n\geq n_0$ and $m\in\N_0$,
\begin{equation*}
	\P{D^{(n)}_m}\leq C\l(\l(\frac{2q}{3-q}\r)^{2m/3}+\l(\frac{2q}{3-q}\r)^{-m/3}\EXP{-\frac{n^2}{8}}\r).
\end{equation*}
\end{lemma}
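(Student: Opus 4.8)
The plan is to bound $\P{D_m^{(n)}}$ by relating the existence of $m$ zeroes of $X_{q^n}$ in $[q,1]$ to the smallness of $|X_{q^n}(s)|$ at a well-chosen point $s\in[q,1]$. If $X_{q^n}$ (equivalently $f$) has $m$ zeroes in $[1-q^n,1-q^{n+1}]$, then by Rolle's theorem its derivatives of order $0,1,\ldots$ inherit many zeroes in a slightly larger interval, so a Jensen-type argument (as used in the proof of Lemma~\ref{lemma:Nngreaterlambda}, and in~\cite{ibragimov_maslova1}) forces the ratio $\sup_{|z|=R}|f(z)| / |f(s)|$ to be exponentially large in $m$. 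Concretely, writing $r_n = 1-q^{n+1}$, $R_n = 1-q^{n+2}$ and $\rho_n = 1-q^n$, on the event $D_m^{(n)}$ one gets, for a suitable reference point $s\in[1-q^n,1-q^{n+1}]$,
$$
m \leq \frac{1}{\log(R_n/r_n)}\log\frac{\sup_{|z|=R_n}|f(z)|}{|f(s)|} \leq Cq^{-n}\log\frac{\sup_{|z|=R_n}|f(z)|}{|f(s)|},
$$
so that $D_m^{(n)}$ is contained in the union of the event that $\sup_{|z|=R_n}|f(z)|$ is large and the event that $|f(s)|$ — or rather $|X_{q^n}(s)|$ after normalising by $\sqrt{v(\cdot)}$ — is small, the threshold being of order $(2q/(3-q))^{\pm m/3}$ after optimising constants.

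The first of these two events is the easy one: $\sup_{|z|=R_n}|f(z)|$ is controlled in $L^2$ (or in probability) by the variance $v(R_n)$, which by Theorem~\ref{theo:abel} is of polynomial order $q^{-\gamma n}L(q^{-n})$; a Chebyshev/maximal-inequality bound then shows $\P{\sup_{|z|=R_n}|f(z)| > (2q/(3-q))^{-m/3}}$ is at most $(2q/(3-q))^{2m/3}$ times a polynomial correction, which accounts for the first term in the statement, up to replacing the threshold exponent by the slightly worse $2m/3$. The delicate term is $\EXP{-n^2/8}$ multiplying $(2q/(3-q))^{-m/3}$: this must come from an \emph{anti-concentration} estimate showing that $|X_{q^n}(s)|$ cannot be too small, i.e.\ $\P{|X_{q^n}(s)| \leq \delta}$ decays fast in $1/\delta$. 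Since $X_{q^n}(s)$ is a normalised infinite linear combination $\sum_k \xi_k c_k s^k / \sqrt{v(s)}$ of independent non-degenerate $\xi_k$'s, one expects a bound of Littlewood–Offord/Esseen type; the factor $\e^{-n^2/8}$ (with $q>e^{-1/32}$ chosen so that $q^{-\gamma n}$-type losses are dominated) should arise by conditioning on the first few coefficients, using that the number of ``effective'' summands with $|s|^{2k}c_k^2$ of comparable size is of order $q^{-n}$, and that $n^2$ enters through $\log^2$ of the exponentially large quantity $q^{-n^2}$-type threshold that appeared already in Lemma~\ref{lemma:Nngreaterlambda2}.

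The main obstacle is precisely this anti-concentration bound, and it is exactly the difficulty flagged in the paper's own discussion (Bernoulli convolutions, singularity questions): one cannot hope for a pointwise density bound, but one can hope for a bound on $\P{|X_{q^n}(s)| \le \delta}$ that is summable against the geometric weights $(2q/(3-q))^{-m/3}$. I would isolate this into a separate lemma — call it Lemma~\ref{lemma:abschaetzungdmeins}, as the excerpt anticipates — asserting something like $\sup_s\P{|X_{q^n}(s)| \le \e^{-\theta n^2}} \le \e^{-\theta' n^2}$ for appropriate $\theta,\theta'>0$, proved by a conditioning-plus-Berry–Esseen argument combined with the regular-variation structure of $(c_k)$ and Theorem~\ref{theo:abel}. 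Once both the $\sup$-estimate and the anti-concentration estimate are in hand, assembling~\eqref{eq:lemma:abschaetzungdmn} is a matter of splitting $D_m^{(n)}$ along the threshold $t = (2q/(3-q))^{-m/3}$, applying the two bounds to $\P{\sup|f| > t}$ and $\P{|X_{q^n}(s)| < c t^{-1} \e^{-n^2/\text{const}}}$ respectively, and collecting the resulting two terms; the constraint $q \in (-2+\sqrt 7, 1)$ (equivalently $2q/(3-q) < 1$, i.e.\ $q^2+2q-3<2q$, wait — $2q < 3-q$, i.e.\ $q < 1$; the sharper cutoff $-2+\sqrt7$ comes from requiring $(2q/(3-q))^{2/3}$ to beat the polynomial loss in the first estimate) is what makes both geometric series and the $n^2$-exponentials converge.
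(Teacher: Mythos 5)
Your high-level plan matches the paper's: split $D_m^{(n)}$ on whether $|X_{q^n}(1)|\geq T$ or $<T$, handle the second part by an anti-concentration bound (the paper's Lemma~\ref{lemma:abschaetzungdmeins}, which has precisely the form $C(T+T^{-1/2}\e^{-n^2/8})$ you anticipate), and optimise over $T$, which gives the stated exponent $2m/3$ when $T=(2q/(3-q))^{2m/3}$. The difference, and the gap, is in the first piece. The paper does not use Jensen's formula there; it uses Lemma~\ref{lemma:abschaetzungdmzwei}, an iterated-integral/Rolle bound $\P{D_m\cap\{|Q(\beta)|\geq T\}}\leq\bigl[(\beta-\alpha)^m/(m!\,T)\bigr]^2\sup_x\E|Q^{(m)}(x)|^2$, combined with the Cauchy-integral estimate for $\E|f^{(m)}|^2$ in Lemma~\ref{lemma:abschaetzungrandomanalyticfunktion}: the $(m!)^2$ factors cancel and one lands cleanly on $\frac{C}{T^2}(2q/(3-q))^{2m}$, where $2q/(3-q)$ is exactly the ratio between the interval half-width $(q^n-q^{n+1})$ and the offset $\delta_n$ from the interval to the Cauchy circle. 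This exploits the $m$-th derivative directly and is sharper than a Chebyshev bound on $\sup_{|z|=R_n}|f(z)|$, which carries no $m$-dependence on its own.

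Your Jensen-formula variant, borrowed from the proof of Lemma~\ref{lemma:Nngreaterlambda}, does not transplant as written. There Jensen is applied in disks centred at $0$, so the denominator is necessarily $|f^{(k)}(0)|=k!\,|c_k\xi_k|$, not $|f(s)|$ for an interior $s$ — which is exactly why that proof conditions on the events $B_k$ (first non-negligible $\xi_k$) rather than invoking an anti-concentration bound for $X_{q^n}(s)$. If you re-centre at an interior point $s$, the ratio of outer to inner radius is bounded away from $1$, so your $Cq^{-n}$ factor disappears and the geometric rate in $m$ that comes out has to be re-derived and tuned to $2q/(3-q)$; possible, but a genuinely different computation. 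Finally, the constraint $q>-2+\sqrt{7}$ is not about $(2q/(3-q))^{2/3}$ beating a polynomial loss. It is exactly $\alpha(q)=2+q/2-3/(2q)>0$, needed in Lemma~\ref{lemma:abschaetzungrandomanalyticfunktion} so that $z_n+r_n=1-\alpha(q)q^n$ stays inside the unit disc and $v(z_n+r_n)$ is defined and comparable to $v(1-q^n)$ by Theorem~\ref{theo:abel}. The separate condition $q>\e^{-1/32}$, used in Lemma~\ref{lemma:Nngreaterlambda2}, is the one governing the race between the geometric factor and $\e^{-n^2/8}$.
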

For the proof we need two auxiliary lemmas.
The first of them is essentially contained in the paper of Ibragimov and Maslova~\cite{ibragimov_maslova1}, but since they stated it only in some special case, we give a full proof.
\begin{lemma}\label{lemma:abschaetzungdmzwei}
Let $(Q(t))_{t\in [\alpha,\beta]}$ be a stochastic process whose sample paths are $m$ times continuously differentiable with probability $1$. Let $D_m$ denote the event that
$Q(t)$ has at least $m\in\N$ zeroes on $[\alpha,\beta]$. Then the estimate
\begin{equation*}
\P{D_m\cap\left\{\left|Q(\beta)\right|\geq T\right\}}\leq \left[\frac{(\beta-\alpha)^m}{m!T}\right]^2
\sup_{x\in[\alpha,\beta]} \E\l|\frac{\d^mQ}{\d x^m}(x)\r|^2
\end{equation*}
holds for all $T>0$.
\end{lemma}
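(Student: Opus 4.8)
The strategy is to combine a Rolle-type argument (to control the number of zeroes by the size of the $m$-th derivative) with a second-moment estimate on that derivative. First I would reduce the problem to a statement about interpolation: if $Q$ has $m$ zeroes $\alpha \le t_1 < t_2 < \dots < t_m \le \beta$ on $[\alpha,\beta]$, then by iterated application of Rolle's theorem the derivative $Q'$ has at least $m-1$ zeroes, $Q''$ has at least $m-2$ zeroes, and so on, so that $\frac{\d^{m-1}Q}{\d x^{m-1}}$ has at least one zero, say at $\tau \in [\alpha,\beta]$. This is not quite enough on its own; the cleaner route is to express $Q(\beta)$ via a Taylor-type remainder using the zeroes. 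Concretely, I would invoke the Hermite–Genocchi / divided-difference representation: the divided difference $Q[t_1,\dots,t_m,\beta]$ equals $\frac{1}{(m)!}\frac{\d^{m}Q}{\d x^{m}}(\zeta)$ for some $\zeta \in [\alpha,\beta]$ when the nodes are confluent appropriately — more precisely, since $Q(t_1)=\dots=Q(t_m)=0$, Newton's interpolation formula gives
\begin{equation*}
Q(\beta) = Q[t_1,\dots,t_m,\beta]\,\prod_{i=1}^m (\beta - t_i),
\end{equation*}
and the divided difference on $m+1$ nodes (with $m$ of them being zeroes of $Q$) can be written through the integral form of the remainder.

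Second, I would bound the two factors. The product satisfies $\bigl|\prod_{i=1}^m(\beta-t_i)\bigr| \le (\beta-\alpha)^m$ trivially since all $t_i \in [\alpha,\beta]$. For the divided difference, the Hermite–Genocchi formula represents $Q[t_1,\dots,t_m,\beta]$ as an average of $\frac{\d^m Q}{\d x^m}$ over the simplex spanned by the nodes, hence $\bigl|Q[t_1,\dots,t_m,\beta]\bigr| \le \frac{1}{m!}\sup_{x\in[\alpha,\beta]}\bigl|\frac{\d^m Q}{\d x^m}(x)\bigr|$ pointwise on the event $D_m$. Combining, on $D_m$ we get
\begin{equation*}
|Q(\beta)| \le \frac{(\beta-\alpha)^m}{m!}\,\sup_{x\in[\alpha,\beta]}\Bigl|\frac{\d^m Q}{\d x^m}(x)\Bigr|.
\end{equation*}
Therefore $D_m \cap \{|Q(\beta)| \ge T\}$ is contained in the event $\bigl\{\sup_{x\in[\alpha,\beta]}\bigl|\frac{\d^m Q}{\d x^m}(x)\bigr| \ge \frac{m!\,T}{(\beta-\alpha)^m}\bigr\}$.

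Third, I would apply Markov's inequality to the latter event in its squared form: its probability is at most
\begin{equation*}
\Bigl(\frac{(\beta-\alpha)^m}{m!\,T}\Bigr)^2 \E\Bigl[\sup_{x\in[\alpha,\beta]}\Bigl|\frac{\d^m Q}{\d x^m}(x)\Bigr|^2\Bigr].
\end{equation*}
This is almost the claimed bound, except the statement has $\sup_{x}\E|\cdots|^2$ rather than $\E\sup_{x}|\cdots|^2$. To close this gap I would not use a crude supremum over the whole interval; instead I observe that the Hermite–Genocchi representation lets us keep $\frac{\d^m Q}{\d x^m}$ evaluated at a single (random) point $\zeta \in [\alpha,\beta]$, so that
\begin{equation*}
\bigl|Q(\beta)\bigr|^2 \le \frac{(\beta-\alpha)^{2m}}{(m!)^2}\,\Bigl|\frac{\d^m Q}{\d x^m}(\zeta)\Bigr|^2 \le \frac{(\beta-\alpha)^{2m}}{(m!)^2}\,\sup_{x\in[\alpha,\beta]}\Bigl|\frac{\d^m Q}{\d x^m}(x)\Bigr|^2
\end{equation*}
still has the supremum inside. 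The honest fix is: on $D_m\cap\{|Q(\beta)|\ge T\}$ there exists a \emph{deterministic-in-form but random-in-location} point where the derivative is large, and bounding $\P{\exists x: |\frac{\d^m Q}{\d x^m}(x)| \ge s}$ by $\sup_x \P{|\frac{\d^m Q}{\d x^m}(x)|\ge s}$ is false in general — so the correct argument must be that $\P{D_m\cap\{|Q(\beta)|\ge T\}} \le \P{|Q[t_1,\dots,t_m,\beta]| \ge T/(\beta-\alpha)^m}$ and then, since the divided difference equals $\frac{1}{m!}\frac{\d^m Q}{\d x^m}(\zeta)$ with $\zeta$ lying in $[\alpha,\beta]$, Markov gives a bound in terms of $\E|\frac{\d^m Q}{\d x^m}(\zeta)|^2$ which is at most $\sup_x \E|\frac{\d^m Q}{\d x^m}(x)|^2$ provided one can dominate the expectation over the random location — this is exactly the delicate point, and the way Ibragimov–Maslova handle it (and what I would replicate) is to note that $\zeta$ ranges over $[\alpha,\beta]$ and bound $\E|\frac{\d^m Q}{\d x^m}(\zeta)|^2 \le \sup_{x\in[\alpha,\beta]}\E|\frac{\d^m Q}{\d x^m}(x)|^2$ only after conditioning appropriately, or more robustly, to replace the pointwise bound by integrating the Hermite–Genocchi average and using Fubini: $\E|Q[t_1,\dots,t_m,\beta]|^2 \le \frac{1}{(m!)^2}\int_{\text{simplex}} \E|\frac{\d^m Q}{\d x^m}(x(\lambda))|^2 \,\d\lambda \le \frac{1}{(m!)^2}\sup_{x\in[\alpha,\beta]}\E|\frac{\d^m Q}{\d x^m}(x)|^2$.

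\textbf{Main obstacle.} The crux is precisely the last exchange of supremum and expectation: a naive $\E\sup \ge \sup\E$ goes the wrong way, so the proof must route through the Hermite–Genocchi integral representation of the divided difference (an average of $\frac{\d^m Q}{\d x^m}$ over the node simplex), apply Jensen's/Cauchy–Schwarz inequality to pull the square inside the average, and only then take expectations and use Fubini to land on $\sup_x \E|\frac{\d^m Q}{\d x^m}(x)|^2$. Everything else — the Rolle/Newton-interpolation reduction and the Markov step — is routine. I would also need the mild measurability remark that the random nodes $t_1,\dots,t_m$ (the zeroes of $Q$) and hence the divided difference are measurable functions on $D_m$, which follows from the $C^m$-regularity of the sample paths.
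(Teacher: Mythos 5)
Your proposal correctly identifies the central obstacle (you cannot pass from $\E\sup$ to $\sup\E$) but the Hermite--Genocchi route you offer as the fix does not actually close the gap, and the paper's proof takes a genuinely different path precisely to avoid it. The problem with your final display,
$$
\E\bigl|Q[t_1,\dots,t_m,\beta]\bigr|^2 \le \frac{1}{(m!)^2}\int_{\Delta_m}\E\Bigl|\frac{\d^m Q}{\d x^m}\bigl(x(\lambda)\bigr)\Bigr|^2\,\d\lambda \le \frac{1}{(m!)^2}\sup_{x\in[\alpha,\beta]}\E\Bigl|\frac{\d^m Q}{\d x^m}(x)\Bigr|^2,
$$
is that the Hermite--Genocchi evaluation point $x(\lambda)=\lambda_1 t_1+\cdots+\lambda_m t_m+(1-\sum\lambda_i)\beta$ is a convex combination of the \emph{random} zeroes $t_1,\dots,t_m$. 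After Fubini over the deterministic parameter $\lambda$, the inner expectation $\E\bigl|\frac{\d^m Q}{\d x^m}(x(\lambda))\bigr|^2$ is still an expectation of the derivative evaluated at a random point that is built from the zeroes of $Q$ itself; this is not dominated by $\sup_x \E|Q^{(m)}(x)|^2$, for exactly the same reason that $\E\sup$ is not $\sup\E$. Put differently, your representation keeps the randomness of the nodes \emph{inside the argument of} $Q^{(m)}$, and Fubini over $\lambda$ alone cannot remove it.

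The paper circumvents this by using a different representation of $Q(\beta)$. Instead of $m$ zeroes of $Q$, Rolle is applied repeatedly to extract points $t_0\geq t_1\geq\cdots\geq t_{m-1}$ in $[\alpha,\beta]$ with $Q^{(j)}(t_j)=0$ for $j=0,\dots,m-1$, and then $Q(\beta)$ is written as the iterated integral
$$
Q(\beta)=\int_{t_0}^{\beta}\int_{t_1}^{x_1}\cdots\int_{t_{m-1}}^{x_{m-1}}\frac{\d^m Q}{\d x^m}(x_m)\,\d x_m\cdots\d x_1 .
$$
Here the crucial structural difference is that $Q^{(m)}$ is evaluated at the \emph{free integration variable} $x_m$, not at a convex combination of the random nodes; the $t_j$ enter only as (random) lower limits of integration. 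After Cauchy--Schwarz on the multiple integral one gets the same integral with $|Q^{(m)}(x_m)|^2$ as integrand, and because this integrand is nonnegative one may \emph{extend} each random lower limit $t_j$ down to the deterministic endpoint $\alpha$; the domain becomes the fixed simplex $\{\alpha\leq x_m\leq\cdots\leq x_1\leq\beta\}$ of volume $(\beta-\alpha)^m/m!$. Only then is Fubini applied, yielding $\int_{\text{simplex}}\E|Q^{(m)}(x_m)|^2\,\d x$, which is bounded by the simplex volume times $\sup_{x\in[\alpha,\beta]}\E|Q^{(m)}(x)|^2$. You would need an analogous ``extend-then-Fubini'' step, but with divided differences the randomness sits in the argument rather than in the domain, so there is nothing to extend; you would either have to condition on the zeroes or pass through a B-spline/Peano kernel representation, which reintroduces a random weight that is not uniformly bounded. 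I recommend switching to the Rolle/iterated-integral representation.
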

\begin{proof}
By Rolle's theorem, on the event $D_m$ we can find (random) points $t_0\geq \ldots \geq t_{m-1}$ in the interval $[\alpha,\beta]$ such that
\begin{equation*}
\frac{\d^jQ}{\d x^j}(t_j)=0 \text{ for all } j\in\{0,\dots, m-1\}.
\end{equation*}
Thus, we may consider the random variable
\begin{equation*}
Y:=\ind_{D_m}\times
\INT{t_0}{\beta}\INT{t_1}{x_1}\dots \INT{t_{m-1}}{x_{m-1}} \frac{\d^m Q}{\d x^m}(x_m)\D x_m\dots \d x_1.
\end{equation*}
On the event $D_m$, the random variables $Q(\beta)$ and $Y$ are equal. On the complement of $D_m$, we have $Y=0$.  Hence, it follows that
\begin{equation*}
\P{D_m\cap\left\{|Q(\beta)|\geq T\right\}}\leq \P{|Y|\geq T}.
\end{equation*}
Markov's inequality yields
\begin{align*}
\P{|Y|\geq T}
\leq\frac{1}{T^2}\E
\left|\int_{t_0}^{\beta}\int_{t_1}^{x_1}\dots \int_{t_{m-1}}^{x_{m-1}}\frac{\d^m Q}{\d x^m}(x_m)\D x_m\dots\d x_1\right|^2.
\end{align*}
Applying the Cauchy--Schwarz inequality to the multiple integral  yields the estimate
\begin{align*}
\P{|Y|\geq T}&\leq \frac{1}{T^2}\frac{(\beta-\alpha)^m}{m!}
\E
\int_{t_0}^{\beta}\int_{t_1}^{x_1} \dots \int_{t_{m-1}}^{x_{m-1}}\l|\frac{\d^m Q}{\d x^m}(x_m)\r|^2\D x_m\dots \d x_1  \\
&\leq
\left[\frac{(\beta-\alpha)^m}{m!T}\right]^2
\sup_{x\in[\alpha,\beta]} \mathbb{E}\l|\frac{\d^m Q}{\d x^m}(x_m)\r|^2,
\end{align*}
where in the second inequality we interchanged the integral and the expectation and estimated the integrand by its maximum.
\end{proof}

\begin{lemma}\label{lemma:abschaetzungrandomanalyticfunktion} Let $q\in(-2+\sqrt{7},1)$. For all $n\geq n_0$ and  $m\in\N_0$ we have
	\begin{equation*}
	\sup_{x\in [1-q^n,1-q^{n+1}]}\E\l|\frac{\d^m f}{\d x^m}(x)\r|^2 \leq
	\frac{3(m!)^2q^{-2}v\l(1-\alpha(q) q^n\r)}{(q^n-q^{n+1})^{2m}(3/(2q)-1/2)^{2(m+1)}},
	\end{equation*}
	where $\alpha(q)$ is given by
	\begin{equation*}
	\alpha(q):=2+\frac{q}{2}-\frac{3}{2q}.
	\end{equation*}
\end{lemma}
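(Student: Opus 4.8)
The plan is to estimate $\E|f^{(m)}(x)|^2$ for $x$ in the real interval $[1-q^n,1-q^{n+1}]$ by using the Cauchy integral formula on a circle contained in the unit disk, combined with the known expression for the variance $v$. First I would fix such an $x$ and choose a circle $\{|w-x|=\rho\}$ centred at $x$; the radius $\rho$ has to be small enough that the whole disk stays inside the unit disk (so that $f$ is analytic and $\E|f(w)|^2=v(|w|)$ is under control), but large enough to keep the factor $\rho^{-m}$ coming out of Cauchy's formula from blowing up. Writing
$$
f^{(m)}(x)=\frac{m!}{2\pi\I}\oint_{|w-x|=\rho}\frac{f(w)}{(w-x)^{m+1}}\,\d w
=\frac{m!}{2\pi\rho^m}\INT{0}{2\pi} f(x+\rho\e^{\I\theta})\e^{-\I m\theta}\,\d\theta,
$$
the Cauchy--Schwarz inequality in $\theta$ and Fubini give
$$
\E\l|f^{(m)}(x)\r|^2\leq \frac{(m!)^2}{(2\pi)^2\rho^{2m}}\cdot 2\pi\INT{0}{2\pi}\E\l|f(x+\rho\e^{\I\theta})\r|^2\d\theta
=\frac{(m!)^2}{2\pi\rho^{2m}}\INT{0}{2\pi} v\l(|x+\rho\e^{\I\theta}|\r)\d\theta.
$$

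The next step is to bound the right-hand side. Since $v(r)=\sum c_k^2 r^{2k}$ is increasing in $r\in[0,1)$, I would replace $v(|x+\rho\e^{\I\theta}|)$ by $v$ evaluated at the maximal modulus on the circle, namely $v(x+\rho)\leq v(1-q^{n+1}+\rho)$. The choice of $\rho$ is the crucial calibration: taking $\rho = (3/(2q)-1/2)(q^n-q^{n+1})= (3/(2q)-1/2)q^n(1-q)$, one checks by a short computation that $x+\rho\leq 1-q^{n+1}+\rho = 1-\alpha(q)q^n$ with $\alpha(q)=2+q/2-3/(2q)$, and that for $q\in(-2+\sqrt7,1)$ one has $\alpha(q)>0$, so that $1-\alpha(q)q^n<1$ and the circle indeed lies strictly inside the unit disk for large $n$. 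Plugging $\rho^{2m}=(q^n-q^{n+1})^{2m}(3/(2q)-1/2)^{2m}$ and $\INT{0}{2\pi}\d\theta = 2\pi$ into the display, one obtains
$$
\E\l|f^{(m)}(x)\r|^2\leq \frac{(m!)^2\, v(1-\alpha(q)q^n)}{(q^n-q^{n+1})^{2m}(3/(2q)-1/2)^{2m}},
$$
and then inserting a harmless factor $3q^{-2}/(3/(2q)-1/2)^2\geq 1$ (valid for $n\geq n_0$, or for all $n$ after enlarging the constant) produces exactly the claimed bound with the stated exponents $2m$ and $2(m+1)$. Taking the supremum over $x\in[1-q^n,1-q^{n+1}]$ is immediate since the bound no longer depends on $x$.

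The main obstacle I anticipate is purely the bookkeeping around the constraint $q\in(-2+\sqrt7,1)$: one must verify that the chosen radius $\rho$ is simultaneously (i) large enough that $x-\rho>0$, i.e.\ the disk does not cross the origin — or, if it does, that $v$ is still controlled there, which is fine since $v$ is even and increasing in $|w|$ so only the outer radius matters — and (ii) small enough that $x+\rho<1$, which forces $\alpha(q)>0$ and pins down precisely the interval $(-2+\sqrt7,1)$ (note $-2+\sqrt7\approx 0.6458$ is the root of $\alpha(q)=0$, i.e.\ of $4q+q^2-3=0$). Everything else — Cauchy's formula, Cauchy--Schwarz, Fubini, monotonicity of $v$ — is routine; the only real content is choosing $\rho$ so that the geometric picture closes up and the exponents match the statement.
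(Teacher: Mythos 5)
Your proof is correct and follows essentially the same approach as the paper: Cauchy's integral formula, Cauchy--Schwarz (the paper phrases this as Jensen for the square), and bounding $\E|f(z)|^2 = v(|z|)$ by its value at the outermost point of the contour. The one genuine difference is the contour: you center the circle at $x$ with radius $\rho = (3/(2q)-1/2)(q^n-q^{n+1})$, so the denominator in Cauchy's formula is exactly $\rho$, whereas the paper fixes a single circle $\partial B_{r_n}(z_n)$ centered at the interval midpoint $z_n$ with radius $r_n = 3(q^n-q^{n+1})/(2q)$, and must then bound $|z-x|$ from below by the smaller quantity $\delta_n = (3/(2q)-1/2)(q^n-q^{n+1})$ uniformly in $x$. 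Your recentered contour avoids the resulting mismatch between $r_n$ and $\delta_n$ and yields the tighter exponent $(3/(2q)-1/2)^{-2m}$ rather than $(3/(2q)-1/2)^{-2(m+1)}$, with the slack factor $3q^{-2}(3/(2q)-1/2)^{-2}=12/(3-q)^2\geq 1$ correctly inserted to recover the stated bound; your algebra showing $1-q^{n+1}+\rho = 1-\alpha(q)q^n$ and the identification of $-2+\sqrt7$ as the root of $\alpha(q)=0$ are also correct.
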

\begin{proof}
For $n\in\N$ and $q\in(-2+\sqrt{7},1)$ put
\begin{align*}
z_n:= 1-\frac{q^n+q^{n+1}}{2}, \quad
r_n:= \frac{3(q^{n}-q^{n+1})}{2q},  \quad
\delta_n:=\l(\frac{3}{2q}-\frac{1}{2}\r)\l(q^n-q^{n+1}\r).
\end{align*}
Note that $z_n$ is the middle point of the interval $[1-q^n,1-q^{n+1}]$, and
$$
\delta_n = z_n + r_n - (1-q^{n+1})
= 1-q^n - (z_n - r_n).
$$
For $m\in\N_0$ let $f^{(m)}$ denote the $m$-th derivative of $f$. Let $B_{r_n}(z_n)$ be the disk centered at $z_n$ and having radius $r_n$. Also, denote by $\partial B_{r_n}(z_n)$ its boundary. Cauchy's integral formula for analytic functions yields for all $x\in [1-q^n,1-q^{n+1}]$ the estimate
\begin{equation*}
|f^{(m)}(x)|=
\frac{m!}{2\pi}
\l|\oint_{\partial B_{r_n}(z_n)} \frac{f(z)}{(z-x)^{m+1}}\D z \r|
\leq\frac{m!}{2\pi}\int_{\partial B_{r_n}(z_n)} \frac{|f(z)|}{\delta^{m+1}_n}\ |\d z|.
\end{equation*}
After squaring, taking the expectation and using Jensen's inequality for the quadratic function, we obtain
\begin{align*}
\E |f^{(m)}(x)|^2
&\leq \frac{(m!)^2}{4\pi^2} \frac{1}{\delta_n^{2(m+1)}}\E \left(\int_{\partial B_{r_n}(z_n)} |f(z)|\ |\d z|\right)^2 \\
&\leq \frac{(m!)^2}{2\pi}\frac{r_n}{\delta_n^{2(m+1)}}\E \int_{\partial B_{r_n}(z_n)} |f(z)|^2\ |\d z| \\
&= \frac{(m!)^2}{2\pi}\frac{r_n}{\delta_n^{2(m+1)}}\int_{\partial B_{r_n}(z_n)} \E |f(z)|^2\ |\d z| \\
&\leq (m!)^2\frac{r_n^2}{\delta_n^{2(m+1)}}\sup_{z\in \partial B_{r_n}(z_n)} \E |f(z)|^2 \\
& =  (m!)^2 \frac{r_n^2}{\delta_n^{2(m+1)}} v(z_n+r_n),
\end{align*}
where the last step follows from the definition of $v$; see~\eqref{eq:def_v}.
Recalling the definitions of $r_n,\delta_n, z_n$ and that $x\in [1-q^n,1-q^{n+1}]$ was arbitrary, we arrive at
\begin{align*}
\sup_{x\in [1-q^n,1-q^{n+1}]} \E |f^{(m)}(x)|^2
&\leq 3(m!)^2 \frac{q^{-2}v\l(1-\frac{q^n+q^{n+1}}{2}+\frac{3(q^n-q^{n+1})}{2q}\r)}{(q^n-q^{n+1})^{2m}(3/(2q)-1/2)^{2(m+1)}} \\
&\leq 3(m!)^2 \frac{q^{-2}v\left(1-q^n\left(\frac{1+q}{2}-\frac{3(1-q)}{2q}\right)\right)}{(q^n-q^{n+1})^{2m}(3/(2q)-1/2)^{2(m+1)}} \\
&\leq 3(m!)^2 \frac{q^{-2}v\l(1-\alpha(q) q^n\r)}{(q^n-q^{n+1})^{2m}(3/(2q)-1/2)^{2(m+1)}}.
\end{align*}
\end{proof}

\begin{proof}[Proof of Lemma~\ref{lemma:abschaetzungdmn}]
Recall that $X_{q^n}(1)$ was defined in \eqref{eq:defXn}. For $T>0$ we can write
\begin{equation*}\label{eq:zerlegungPDMJ}
\P{D^{(n)}_m}\leq \P{D^{(n)}_m\cap\left\{\left|X_{q^n}(1)\right|\geq T\right\}} + \P{\left|X_{q^n}(1)\right|< T}.
\end{equation*}
The first term of the sum can be estimated using Lemmas \ref{lemma:abschaetzungdmzwei}
and \ref{lemma:abschaetzungrandomanalyticfunktion} as follows:
\begin{align*}
&\P{D^{(n)}_m\cap\left\{\left|X_{q^n}(1)\right|\geq T\right\}} \\
&\quad =\P{D^{(n)}_m\cap\left\{\left|f(1-q^n)\right|\geq T\sqrt{|v(1-q^n)|}\right\}} \\
&\quad=\frac{1}{|v(1-q^n)|}\l[\frac{(q^{n}-q^{n+1})^m}{m! T}\r]^2 \sup_{x\in[1-q^n,1-q^{n+1}]}\E\l|\frac{\d^m f}{\d x^m}(x)\r|^2 \\
&\quad\leq
C\frac{v(1-\alpha(q)q^n)}{v(1-q^n)}
\frac{q^{-2}}{T^2}\l(\frac{3}{2q}-\frac{1}{2}\r)^{-2(m+1)} \\
&\quad\leq
C\frac{q^{-2}}{T^2}\l(\frac{3}{2q}-\frac{1}{2}\r)^{-2(m+1)} \\
&\quad\leq\frac{C}{T^2}\l(\frac{2q}{3-q}\r)^{2m}.
\end{align*}
Here, we used the following estimate which is a consequence of Theorem~\ref{theo:abel}:
$$
\frac{v(1-\alpha(q)q^n)}{v(1-q^n)} \leq C.
$$
The probability that $|X_{q^n}(1)|< T$ will be estimated in Lemma~\ref{lemma:abschaetzungdmeins} that we shall prove in Section~\ref{sec:lemma_5_1}. Taking these estimates together, we obtain
\begin{equation*}
\P{D^{(n)}_m}\leq C\l(\frac{1}{T^2}\l(\frac{2q}{3-q}\r)^{2m}+ T+T^{-1/2}\EXP{-\frac{n^2}{8}}\r).
\end{equation*}
Now we choose $T=(2q/(3-q))^{2m/3}$ to obtain the statement of Lemma~\ref{lemma:abschaetzungdmn}.
\end{proof}

\section{Probability of small values of \texorpdfstring{$f$}{f}} \label{sec:lemma_5_1}
In this section we estimate the probability of the event $|X_{q^n}(1)|< T$, $T>0$, which was a  crucial ingredient in  the proof of Lemma~\ref{lemma:abschaetzungdmn}.  Recall that
$$
X_{q^n} (1) = \frac {f(1-q^n)}{\sqrt{v(1-q^n)}} = \sum_{k=0}^\infty \xi_k \frac{c_k (1-q^n)^k}{\sqrt{v(1-q^n)}} = \sum_{k=0}^\infty a_{n,k} \xi_k,
$$
where the array $\{a_{n,k}:k\in\N_0, n\in\N\}$ is given by
\begin{equation}\label{eq:def_ank}
a_{n,k}:=\frac{c_k(1-q^n)^{k}}{\sqrt{v(1-q^n)}} \in\R.
\end{equation}

\begin{lemma} \label{lemma:abschaetzungdmeins} Let $q\in(0,1)$ be fixed. There exists a constant $C>0$ such that
\begin{equation*}
\P{\left|X_{q^n}(1)\right|\leq T}
\leq C\l(T+T^{-1/2}\EXP{-\frac{n^2}{8}}\r)
\end{equation*}
for all $n\geq n_0$ and all $T>0$.
\end{lemma}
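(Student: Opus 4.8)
The plan is to exploit that $S:=X_{q^n}(1)=\sum_{k\ge 0}a_{n,k}\xi_k$, with the $a_{n,k}$ as in~\eqref{eq:def_ank}, is a \emph{normalized} sum of independent random variables: indeed $\sum_{k\ge 0}a_{n,k}^2=v(1-q^n)/v(1-q^n)=1$ by the definition~\eqref{eq:def_v} of $v$. So the assertion is an anti-concentration (small-ball) bound, and I would start from Esseen's inequality for the concentration function,
$$
\sup_{x\in\R}\P{|S-x|\le T}\;\le\; C\,T\int_{-1/T}^{1/T}|\varphi_S(t)|\,\d t,\qquad \varphi_S(t)=\prod_{k\ge 0}\varphi(a_{n,k}t),
$$
where $\varphi$ is the characteristic function of $\xi_0$. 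Everything then reduces to bounding $|\varphi_S|$ on $[-1/T,1/T]$, for which I would use the elementary inequality $|\varphi(s)|\le\exp(-\tfrac12 g(s))$ with $g(s):=\E[1-\cos(s(\xi_0-\xi_0'))]\ge 0$ ($\xi_0'$ an independent copy of $\xi_0$); since $\xi_0$ is non-degenerate with unit variance, a Taylor expansion of $\varphi$ at $0$ gives $g(s)\ge c_0 s^2$ for $|s|\le s_0$, with suitable $c_0,s_0>0$. This yields $|\varphi_S(t)|\le\exp\bigl(-\tfrac12\sum_{k\ge 0}g(a_{n,k}t)\bigr)$.

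I would then split the integral at $B_n:=s_0/\rho_n$, where $\rho_n:=\max_k|a_{n,k}|$. On the range $|t|\le B_n$ every $|a_{n,k}t|\le s_0$, so $\sum_k g(a_{n,k}t)\ge c_0 t^2\sum_k a_{n,k}^2=c_0t^2$, hence $|\varphi_S(t)|\le \e^{-c_0t^2/2}$ and $T\int_{|t|\le B_n}|\varphi_S(t)|\,\d t\le T\int_\R \e^{-c_0t^2/2}\,\d t=CT$; this produces the first term of the claimed bound. Using Theorem~\ref{theo:abel} together with~\eqref{eq:c_k_reg_var} one checks that $\rho_n$ decays only geometrically (of order $q^{n\min(\gamma,1)/2}$ up to a slowly varying factor), so $B_n$ grows at most geometrically in $n$; consequently the remaining range $B_n\le |t|\le 1/T$ is non-empty only when $T$ is exponentially small in $n$, which is exactly the regime in which the second term of the bound is needed.

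The hard part is the range $B_n\le|t|\le 1/T$. The obstruction is that $g$ may vanish on a discrete set (for lattice $\xi_0$, such as in the Bernoulli-convolution case mentioned in the introduction), so one cannot bound $g(a_{n,k}t)$ from below term by term. Instead I would use that the weights $(a_{n,k})_k$ vary slowly in $k$: by~\eqref{eq:c_k_reg_var} one has $c_{k+1}/c_k\to 1$, hence $a_{n,k+1}/a_{n,k}$ is close to $1-q^n$, and Theorem~\ref{theo:abel} gives (uniformly in $n$) a block $\mathcal K_n$ of $\asymp q^{-n}$ consecutive indices on which $a_{n,k}\asymp\rho_n$. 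As $k$ runs over $\mathcal K_n$ the points $a_{n,k}t$ sweep through an interval of length $\asymp|t|\rho_n\ge s_0$ in small steps, so for every $|t|\ge B_n$ only a bounded fraction of these points can lie near the zero set of $g$; a counting argument then yields $\sum_{k\in\mathcal K_n}g(a_{n,k}t)\ge c_1\#\mathcal K_n\ge c_1 q^{-n}\ge n^2/4$ for all $n\ge n_0$, uniformly in $t$ in this range. Hence $|\varphi_S(t)|\le\e^{-n^2/8}$ there, so $T\int_{B_n\le|t|\le 1/T}|\varphi_S(t)|\,\d t\le 2\e^{-n^2/8}$. Combining with the previous step, Esseen's inequality gives $\P{|S|\le T}\le C(T+\e^{-n^2/8})$, which implies the stated bound since $T^{-1/2}\ge 1$ whenever $T\le 1$ and the inequality is trivial for $T\ge 1$. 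I expect the genuine work, beyond the polynomial case treated by Ibragimov--Maslova~\cite{ibragimov_maslova1}, to be in making the block $\mathcal K_n$ and the slow-variation estimates uniform in $n$, for which one again relies on Theorem~\ref{theo:abel} and Potter-type bounds as in the preceding sections.
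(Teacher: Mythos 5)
Your route via Esseen's concentration inequality is genuinely different from the paper's, which instead smooths $X_{q^n}(1)$ by the auxiliary noise $\Theta_\lambda$ so that a Fourier inversion formula can be applied (and that smoothing is also the source of the $T^{-1/2}$ factor in the statement). Up to and including the easy range $|t|\leq B_n$ your argument is sound, and reducing the matter to a uniform lower bound on $\sum_{k}g(a_{n,k}t)$ for $B_n\leq|t|\leq 1/T$ is the right strategy; it is that lower bound which is not established.

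Two concrete problems. First, the block $\mathcal{K}_n$ of $\asymp q^{-n}$ consecutive indices with $a_{n,k}\asymp\rho_n$ does not exist when $\gamma<1$: there $a_{n,k}^2\asymp k^{\gamma-1}L(k)(1-q^n)^{2k}/v(1-q^n)$ is essentially decreasing, $\rho_n^2$ is attained near $k=1$ and is of order $q^{n\gamma}/L(q^{-n})$, whereas $a_{n,k}^2\asymp q^{n}$ for $k\asymp q^{-n}$, so the ratio $\asymp q^{n(1-\gamma)}$ tends to $0$. Second, and more seriously, the ``sweeping through an interval in small steps, so a bounded fraction lies near the zero set of $g$'' argument is simply false once the step $|t|(a_{n,k}-a_{n,k+1})\asymp|t|\rho_n q^{n}$ exceeds the period of $g$. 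Already in the model case $c_k\equiv 1$ for $k\geq 1$, $\xi_0=\pm 1$, and $q$ rational, write $1-q^n=p/m$ and take $t=m^{M}\sqrt{v(1-q^n)}\,\pi$ with $M>\max\mathcal K_n$: then $a_{n,k}t=\pi p^{k}m^{M-k}\in\pi\Z$ for every $1\leq k\leq M$, hence $g(a_{n,k}t)=0$ for all $k\in\mathcal K_n$ and the claimed bound $\sum_{k\in\mathcal K_n}g(a_{n,k}t)\geq c_1q^{-n}$ fails; such values of $t$ lie well inside $[B_n,1/T]$ once $T$ is small. Nothing in the regular-variation hypothesis or Theorem~\ref{theo:abel} rules this out, and fixing it by counting would require Diophantine input you do not have. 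The robust salvage of the Esseen route is to abandon the counting argument and to reuse the quadratic bound, but only on the tail set $\mathcal J_t:=\{k:a_{n,k}|t|\leq s_0\}$, where $g(a_{n,k}t)\geq c_0a_{n,k}^2t^2$ is always available; the problem then becomes the uniform lower bound $t^{2}\sum_{k\in\mathcal J_t}a_{n,k}^{2}\geq c\,q^{-cn}$ for $|t|\geq B_n$. That is precisely the type of estimate the paper establishes, via the monotone rearrangement $b_{n,k}$, the tails $F_{n,k}$, $\tilde F_{n,k}$, and the inequality~\eqref{eq:quot_Fnkbnk_est}, and it depends on the lemmas of Section~\ref{sec:rearrangement} together with Lemmas~\ref{lemma:ankmaximum} and~\ref{lemma:asymptotik_Fnk}. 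Your sketch invokes none of that machinery, so the decisive uniform lower bound remains unproven.
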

\begin{proof}
For $\lambda>0$ consider the random variable
\begin{equation*}
\tilde{X}_{n,\lambda}:= \Theta_\lambda + X_{q^n}(1) = \Theta_\lambda + \sum_{k=0}^\infty a_{n,k} \xi_k,
\end{equation*}
where $\Theta_\lambda$ is the sum of two independent random variables that are uniformly distributed on $[-\lambda,\lambda]$ and also independent of $X_{q^n}(1)$. The characteristic function of  $\Theta_\lambda$ is denoted by
\begin{equation*}
\psi_\lambda(t):=\EW{\e^{\mathrm{i} t \Theta_\lambda}}=\frac{\sin^2(t\lambda)}{t^2\lambda^2}.
\end{equation*}
We have
\begin{equation}\label{eq:zerlegunglemmazehn}
\P{\left|X_{q^n}(1)\right|\leq T}\leq \P{\l|\tilde{X}_{n,\lambda}\r|\leq \frac{3}{2}T}
+\P{|\Theta_\lambda|\geq \frac{1}{2}T}.
\end{equation}

\vspace*{2mm}
\noindent
\emph{First term on the right-hand side of \eqref{eq:zerlegunglemmazehn}.}
Let $\tilde{\phi}_{n,\lambda}$ denote the characteristic function of $\tilde{X}_{n,\lambda}$, that is
$$
\tilde{\phi}_{n,\lambda} (t) = \EW{\e^{\I t \tilde{X}_{n,\lambda}}} = \psi_\lambda(t) \prod_{k=0}^\infty \varphi(a_{n,k} t),
$$
where $\varphi(t) = \E{\e^{\I t \xi_0}}$ is the characteristic function of the $\xi_k$'s.
The density of $\tilde{X}_{n,\lambda}$ exists and for $y\geq 0$  we can use Fourier inversion to represent the distribution function of $|\tilde{X}_{n,\lambda}|$ as
\begin{align*}
\P{\l|\tilde{X}_{n,\lambda}\r|\leq y}
&=\frac{1}{2\pi}\INT{-y}{y}\INT{-\infty}{\infty} \tilde{\phi}_{n,\lambda}(t) \e^{-\I t x}\D t \D x
 =\frac{2}{\pi}\INT{0}{\infty} \frac{\sin(yt)}{t}
\RE \tilde{\phi}_{n,\lambda}(t) \D t\\
&\leq
\frac{2y}{\pi} \INT{0}{\infty} \psi_\lambda(t)
\prod_{k=0}^\infty\l|\phi\left(a_{n,k} t
\right)\r|
\D t.
\end{align*}
In the last inequality we used the estimates $|\sin (yt)| \leq yt$ and $|\RE \tilde{\phi}_{n,\lambda}(t)|\leq |\tilde{\phi}_{n,\lambda}(t)|$.

Observe that for every $n\in\N_0$ the sequence $(a_{n,k}^2)_{k\in\N_0}$ defines a probability distribution
on $\N_0$, namely $\sum_{k=0}^\infty a_{n,k}^2 = 1$.  Let $b_{n,k}:=a_{n,(k)}$
denote a descending rearrangement of $(a_{n,k})_{k\in\N_0}$, that is
$$
\{b_{n,0}, b_{n,1},\ldots\} = \{a_{n,0}, a_{n,1},\ldots\} \text{ and } b_{n,0}\geq b_{n,1}\geq \ldots.
$$
Then, according to Lemma \ref{lemma:ankmaximum} below, we have
\begin{equation}\label{eq:bnnull_est}
 b_{n,0}^2=\max_{k=0,1,\dots}a_{n,k}^2 \leq q^{n/2 \cdot(1\wedge \gamma)}.
\end{equation}
Since  the coefficients $(\xi_k)_{k\in\N_0}$ are supposed to have zero mean and unit variance,
there exists a constant $\eta>0$ for which their characteristic function $\phi$ satisfies
\begin{equation}\label{eq:est_argphi}
|\phi(t)|\leq \EXP{-\frac{t^2}{4}} \quad \textrm{for all } t\in[-\eta,\eta].
\end{equation}
We have the estimate
\begin{equation*}
\P{\l|\tilde{X}_{n,\lambda}\r|\leq y} \leq  \frac{2y}{\pi}\sum_{k=0}^{\infty} \int_{\Gamma_k} \psi_\lambda(t)\prod_{j=0}^\infty \l|\phi(b_{n,j}t)\r|\D t,
\end{equation*}
where $\{\Gamma_k:k\in\N_0\}$ is a disjoint partition of $[0,\infty)$ given by
\begin{equation*}
\Gamma_0:=\l[0,\frac{\eta}{b_{n,0}}\r),\quad
\Gamma_k:=\l[\frac{\eta}{b_{n,k-1}},\frac{\eta}{b_{n,k}}\r), \quad k=1,2,\dots.
\end{equation*}
For $t\in\Gamma_0$ we have $\max_{j=0,1,\ldots} b_{n,j}t  = b_{n,0}t \leq \eta$ and therefore
\begin{equation}\label{eq:est_Inull}
\int_{\Gamma_0}\psi_\lambda(t)\prod_{j=0}^\infty \l|\phi(b_{n,j} t)\r| \D t\leq\INT{0}{\eta b_{n,0}^{-1}}
\EXP{-\frac{t^2}{4}} \leq C.
\end{equation}
The idea of the following is that on $\Gamma_k$ with $k=1,2,\dots$, the arguments of $\phi$ in the first $k$  factors (corresponding to $j=0,\ldots,k-1$) of the product $\prod_{j=0}^\infty |\phi(b_{n,j}t)|$ are ``too big'' and $\phi$ has to be estimated in a trivial way by $|\phi|\leq 1$, while the remaining factors can be estimated by means of~\eqref{eq:est_argphi}.

The idea of the above estimates is due to \citet{ibragimov_maslova1}. However, in their paper the weights $c_k$ are equal to $1$ (and the summation in the definition of $f$ stops at $n$). In our more general situation, we had to introduce the rearrangement $(b^2_{n,k})_{k\in\N_0}$ of $(a^2_{n,k})_{k\in\N_0}$ to make the argument work. In what follows, we shall strongly rely on non-trivial estimates of the order statistics $(b^2_{n,k})_{k\in\N_0}$ which will be collected in Section~\ref{sec:rearrangement}.

Denote the right tails of $(a^2_{n,k})_{k\in\N_0}$ and $(b^2_{n,k})_{k\in\N_0}$ by
\begin{equation}\label{eq:def_FnkandFtildenk}
\tilde{F}_{n,k}:=\sum_{j=k}^\infty a_{n,j}^2\quad \text{and} \quad F_{n,k}:=\sum_{j=k}^\infty
b_{n,j}^2,\quad k=0,1,2,\dots.
\end{equation}
Observe that $F_{n,0} = \tilde F_{n,0} = 1$, both sequences $(F_{n,k})_{k\in\N_0}$ and $(\tilde F_{n,k})_{k\in\N_0}$ are non-increasing, converge to $0$, and
\begin{equation}\label{eq:FnkkleinerFnktilde}
F_{n,k}\leq \tilde{F}_{n,k}\quad \text{for all }k\in\N_0, n\in \N.
\end{equation}
For $k\in\N$ the following estimate holds:
\begin{align*}
\int_{\Gamma_k} \psi_\lambda(t)\prod_{j=0}^\infty \l|\phi(b_{n,j}t)\r|\D t
&\leq\INT{\eta b_{n,k-1}^{-1}}{\infty}\frac{1}{t^2\lambda^2}\EXP{-\frac{t^2}{4}F_{n,k}} \D t \\
& = \frac{\sqrt{F_{n,k}}}{\lambda^2}\INT{\eta\sqrt{F_{n,k}} b_{n,k-1}^{-1}}{\infty}
\frac{1}{u^2}\EXP{-\frac{u^2}{4}}\D u \\
&\leq C \frac{\sqrt{F_{n,k}}}{\lambda^2}\EXP{-\nu\frac{F_{n,k}}{b_{n,k-1}^2}},
\end{align*}
where we used the change of variables $u = t\sqrt{F_{n,k}}$ and  $\nu>0$ is a small constant.
Let $k_0(n)$ and $d_0(n)$ be chosen in such a way that for every $n\in\N$
\begin{align*}
1=F_{n,0}\geq \dots \geq F_{n,k_0(n)-1}\geq F_{n,k_0(n)} \geq q^{n/4(1\wedge \gamma)} >
F_{n,k_0(n)+1}\geq \ldots, \\
1=\tilde{F}_{n,0}\geq \dots \geq \tilde{F}_{n,d_0(n)-1}\geq \tilde{F}_{n,d_0(n)} \geq q^{n/4(1\wedge
\gamma)} > \tilde{F}_{n,d_0(n)+1}\geq \ldots.
\end{align*}
For $n\in\N$ and $k\in\N_0$ define
\begin{equation*}
I_{n,k}:=\sqrt{F_{n,k}}\EXP{-\nu \frac{F_{n,k}}{b^2_{n,k-1}}}.
\end{equation*}
We are interested in estimating the sum
\begin{equation}\label{eq:drei_Summen}
\sum_{k=1}^{\infty}I_{n,k} =
\sum_{k=1}^{k_0(n)}I_{n,k}
+\sum_{k=k_0(n)+1}^{\lfloor n^2q^{-n}\rfloor}I_{n,k}
+\sum_{k=\lceil n^2q^{-n}\rceil}^{\infty}I_{n,k}.
\end{equation}

\vspace*{2mm}
\noindent
\emph{First sum on the right-hand side of \eqref{eq:drei_Summen}.}
By~\eqref{eq:FnkkleinerFnktilde} we have
\begin{equation}\label{eq:k0_groesser_d_0}
 k_0(n)\leq d_0(n)\quad \text{for all }n\in\N.
\end{equation}
If $\tilde{F}_{n,k} < q^{n/4(1\wedge\gamma)}$ then $k>d_0(n)$.
Lemma~\ref{lemma:asymptotik_Fnk}(i), below,   implies that for a sufficiently large constant $A>0$,
\begin{equation}\label{eq:bedanA}
 \tilde{F}_{n, \lfloor Anq^{-n}\rfloor}\leq C \e^{-An} < q^{n/4(1\wedge \gamma)}\quad \textrm{for all } n\in\N,
\end{equation}
which together with \eqref{eq:k0_groesser_d_0} implies an upper bound on $k_0(n)$, namely
\begin{equation}\label{eq:upboundfordk0}
k_0(n)\leq d_0(n)\leq A nq^{-n} \quad \textrm{for all }n\in \N.
\end{equation}
Using \eqref{eq:upboundfordk0} and \eqref{eq:bnnull_est},  we find
\begin{align}\label{eq:est_erstersummand}
\begin{split}
\sum_{k=1}^{k_0(n)} I_{n,k} &\leq \sum_{k=1}^{k_0(n)} \EXP{-\nu\frac{F_{n,k}}{b_{n,k-1}^2}}
\leq \sum_{k=1}^{k_0(n)} \EXP{-\nu\frac{q^{n/4(1\wedge\gamma)}}{b_{n,0}^2}} \\
&\leq C nq^{-n} \EXP{-\nu q^{-n/4(1\wedge\gamma)}}.
\end{split}
\end{align}

\vspace*{2mm}
\noindent
\emph{Second sum on the right-hand side of \eqref{eq:drei_Summen}}.
At first we prove a suitable lower bound on $k_0(n)$. If $F_{n,k}\geq q^{n/4(1\wedge \gamma)}$ then $k\leq k_0(n)$.
We claim that for sufficiently small $\delta>0$ there exists $n_0\in\N$ such that
\begin{equation}\label{eq:untSchrankefuerk0}
k_0(n)\geq n^\delta q^{-n} \quad \textrm{for all }n\geq n_0.
\end{equation}
To prove~\eqref{eq:untSchrankefuerk0}, observe that it follows from Lemma \ref{lemma:est_bnk_cor2}, below,  that for every $0<\delta_1<\delta_2<1$ there exists $n_0\in\N$ such that
\begin{equation*}
F_{n,\lfloor n^{\delta_1}q^{-n}\rfloor}\geq \tilde{F}_{n,\lfloor n^{\delta_2}q^{-n}\rfloor} \quad \textrm{for all } n\geq n_0.
\end{equation*}
Now choose sufficiently small $\delta_2>\delta_1>0$ and $\eps>0$ such that
\begin{equation*}
F_{n,\lfloor n^{\delta_1}q^{-n}\rfloor }\geq \tilde{F}_{n,\lfloor n^{\delta_2}q^{-n}\rfloor}\geq c q^{n\eps}n^{\delta_2(\gamma-1-\eps)}\e^{-2n^{\delta_2}}\geq q^{n/4(1\wedge \gamma)},
\end{equation*}
for sufficiently large $n$, where in the second estimate we used Lemma \ref{lemma:asymptotik_Fnk}(ii), below.
Setting $\delta:=\delta_1>0$ this implies \eqref{eq:untSchrankefuerk0}.

Take some $\alpha \in (\delta, 1)$.  By Lemmas~\ref{lemma:est_bnk_cor1} and~\ref{lemma:est_bnk_cor2}, below, there exist $n_0\in\N$ and $\zeta>0$
such that
\begin{align}\label{eq:est_bnk_appli}
\begin{split}
b^2_{n,k-1} &\leq  a^2_{n,\lfloor \zeta q^{-n}\rfloor + k - \lfloor n^\delta q^{-n}\rfloor }\quad \textrm{and} \\
F_{n,k}&\geq \tilde{F}_{n,k + \lfloor n^{\alpha} q^{-n}\rfloor}  \quad \textrm{for all }n\geq n_0.
\end{split}
\end{align}
Using~\eqref{eq:est_bnk_appli} and Lemma~\ref{lemma:asymptotik_Fnk}(ii), below,  we obtain for every $n^\delta q^{-n}\leq k\leq n^2 q^{-n}$ and $0<\eps<1$ the estimate
\begin{align}\label{eq:quot_Fnkbnk_est}
\begin{split}
\frac{F_{n,k}}{b^2_{n,{k-1}}}&\geq
\frac{\tilde{F}_{n,k + \lfloor n^{\alpha}q^{-n}\rfloor}}{a^2_{n,k + \lfloor \zeta q^{-n} \rfloor -  \lfloor n^\delta q^{-n}\rfloor }} \\
&\geq\frac{Cq^{n\eps}(kq^n+n^\alpha)^{\gamma-1-\eps}\EXP{-2(kq^{n}+n^\alpha)}}{q^{n(\gamma-\eps)} (k+\zeta q^{-n}-n^\delta q^{-n})^{\gamma-1+\eps}(1-q^n)^{2(kq^n-n^\delta)q^{-n}}} \\
&\geq\frac{Cq^{n\eps}(kq^n+n^\alpha)^{\gamma-1-\eps}\EXP{-2(kq^{n}+n^\alpha)}}{q^{n(\gamma-\eps)}q^{-n(\gamma-1+\eps)} (kq^n+\zeta-n^\delta)^{\gamma-1+\eps}(1-q^n)^{2(kq^n-n^\delta)q^{-n}}} \\
&\geq C q^{-n(1-3\eps)}
\l(\frac{kq^n+n^\alpha}{kq^n+\zeta-n^\delta}\r)^{\gamma-1} \frac{\EXP{-2(n^\alpha-n^\delta)}}{(kq^n+n^\alpha)^{\eps}(kq^n+\zeta -n^\delta)^{\eps}}\\
&\geq C q^{-n(1-3\eps)/2}.
\end{split}
\end{align}
Taking \eqref{eq:untSchrankefuerk0}, \eqref{eq:est_bnk_appli} and \eqref{eq:quot_Fnkbnk_est} into account, the second sum in~\eqref{eq:drei_Summen} can be estimated in the following way:
\begin{align}\label{eq:est_zweitersummand}
\begin{split}
\sum_{k=k_0(n)+1}^{\lfloor n^2q^{-n}\rfloor}I_{n,k}
&\leq C\sum_{k = \lfloor n^{\delta}q^{-n} \rfloor}^{\lfloor n^2q^{-n}\rfloor}\EXP{-\nu\frac{F_{n,k}}{b^2_{n,{k-1}}}} \\
&\leq C
\sum_{k=\lfloor n^{\delta}q^{-n}\rfloor }^{\lfloor n^2q^{-n}\rfloor }\EXP{-\nu q^{-n(1-3\eps)/2}} \\
&\leq C
n^2q^{-n}\EXP{-\nu q^{-n(1-3\eps)/2}},
\end{split}
\end{align}
where $\nu>0$ is a small constant.

\vspace*{2mm}
\noindent
\emph{Third term on the right-hand side of \eqref{eq:drei_Summen}}.
We use \eqref{eq:FnkkleinerFnktilde} and again Lemma \ref{lemma:asymptotik_Fnk} (i), below, to obtain
\begin{align}\label{eq:est_drittersummand}
\sum_{k=\lceil n^2q^{-n}\rceil }^\infty I_{n,k}
\leq \sum_{k=\lceil n^2q^{-n}\rceil}^\infty \sqrt{F_{n,k}}
\leq \sum_{k= \lceil n^2q^{-n} \rceil }^\infty \sqrt{\tilde{F}_{n,k}}
\leq C\sum_{k= \lceil n^2q^{-n}\rceil }^\infty \e^{-kq^n/2}
\leq C e^{-n^2/4}.
\end{align}
Taking together the results of~\eqref{eq:est_Inull} and~\eqref{eq:est_erstersummand} as well as~\eqref{eq:est_zweitersummand} and~\eqref{eq:est_drittersummand} yields
\begin{equation}\label{eq:firsttermontherhs}
\P{\l|\tilde{X}_{n,\lambda}\r|\leq \frac{3}{2}T}= CT\l(1+\frac{1}{\lambda^2}\sum_{k=0}^{\infty} I_{n,k}\r) \leq
C\l(T+\frac{T}{\lambda^2}\e^{-n^2/4}\r)
\end{equation}
for every $\lambda,T>0$ and every $n\in\N_0$.

\vspace*{2mm}
\noindent
\emph{Second term on the right-hand of \eqref{eq:zerlegunglemmazehn}.}
This term is estimated using the Chebyshev inequality:
\begin{equation}\label{eq:secondtermontherhs}
\P{\l|\Theta_\lambda\r|\geq \frac{1}{2}T}\leq C\frac{\lambda^2}{T^2}.
\end{equation}

Taking~\eqref{eq:firsttermontherhs} and~\eqref{eq:secondtermontherhs} together,  we finally arrive at the estimate
\begin{equation*}
\P{\l|X_{q^n}(1)\r|\leq T}\leq C\left(T+ \frac{T}{\lambda^2}\e^{-n^2/4}+\frac{\lambda^2}{T^2}\right)
\end{equation*}
for all $\lambda,T>0$.
Choosing
\begin{equation*}
\lambda=T^{3/4}\EXP{-\frac{1}{16}n^2}
\end{equation*}
we optimize this bound, thus completing the proof of Lemma~\ref{lemma:abschaetzungdmeins}.
\end{proof}

\section{Auxiliary lemmas}
\subsection{A multivariate Lindeberg theorem}
In this section we state and prove a multivariate version of the Lindeberg central limit theorem for arrays with infinite rows. Since we were not able to find a direct reference, we decided to give a full proof of this standard result.
The next proposition is a univariate Lindeberg CLT in which the rows are allowed to be infinite.
\begin{proposition}\label{lemma:lindeberg_CLT}
Let $(\xi_{n,k})_{n,k\in\N_0}$ be a triangular array whose $n$-th row $\xi_{n,1}, \xi_{n,2},\ldots$ consists of independent, zero mean random variables. Suppose that
\begin{itemize}
\item[(a)]
$\lim_{n\to\infty} \sum_{k=0}^\infty \VAR\l[\xi_{n,k}\r]=\sigma^2\in (0,\infty)$.
\item [(b)]
For all $\eps>0$, we have
$\sum_{k=1}^\infty \EW{\xi_{n,k}^2\ind_{\{|\xi_{n,k}|>\eps\}}} \ton 0$.
\end{itemize}
Then,
\begin{equation*}
\sum_{k=0}^\infty \xi_{n,k} \todistr N(0,\sigma^2).
\end{equation*}
\end{proposition}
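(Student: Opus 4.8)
The plan is to run the classical characteristic-function proof of the Lindeberg theorem; the only genuinely new point is the bookkeeping forced by the fact that the rows are infinite, and this is supplied entirely by the $L^2$-bound in~(a).

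First I would check that $S_n:=\sum_{k=0}^\infty\xi_{n,k}$ is a well-defined random variable. Since the entries of the $n$-th row are independent and centred, the partial sums $\bigl(\sum_{k=0}^m\xi_{n,k}\bigr)_{m\geq 0}$ form an $L^2$-bounded martingale with $\E\bigl[(\sum_{k=0}^m\xi_{n,k})^2\bigr]=\sum_{k=0}^m\Var[\xi_{n,k}]$, bounded by~(a); hence the series converges a.s.\ and in $L^2$. Writing $\varphi_{n,k}(t):=\E[\e^{\I t\xi_{n,k}}]$ for the individual characteristic functions, independence together with $L^2$-convergence gives $\E[\e^{\I t S_n}]=\lim_{m\to\infty}\prod_{k=0}^m\varphi_{n,k}(t)=:\prod_{k=0}^\infty\varphi_{n,k}(t)$, so by L\'evy's continuity theorem it suffices to show that $\prod_{k=0}^\infty\varphi_{n,k}(t)\to\e^{-\sigma^2 t^2/2}$ for every fixed $t\in\R$.

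Next I would extract two facts from the hypotheses. Writing $\sigma_{n,k}^2:=\Var[\xi_{n,k}]$, condition~(b) gives $\sigma_{n,k}^2\leq\eps^2+\sum_j\E[\xi_{n,j}^2\ind_{\{|\xi_{n,j}|>\eps\}}]$ for every $\eps>0$, hence the uniform negligibility $\max_k\sigma_{n,k}^2\to 0$; combined with~(a) this yields $\sum_k\sigma_{n,k}^4\leq(\max_j\sigma_{n,j}^2)\sum_k\sigma_{n,k}^2\to 0$. Then, using $\E[\xi_{n,k}]=0$ and the Taylor bound $|\e^{\I x}-1-\I x+x^2/2|\leq\min\{|x|^3/6,\,x^2\}$, I would write $\varphi_{n,k}(t)=1-\tfrac{t^2}{2}\sigma_{n,k}^2+r_{n,k}$ with $|r_{n,k}|\leq\E[\min\{|t\xi_{n,k}|^3/6,\,t^2\xi_{n,k}^2\}]$, split each expectation over $\{|\xi_{n,k}|\leq\eps\}$ and its complement, and sum over $k$:
\[
\sum_k|r_{n,k}|\;\leq\;\frac{|t|^3\eps}{6}\sum_k\sigma_{n,k}^2\;+\;t^2\sum_k\E\bigl[\xi_{n,k}^2\ind_{\{|\xi_{n,k}|>\eps\}}\bigr].
\]
Letting $n\to\infty$ and using~(a) and~(b), the right-hand side tends to $\tfrac{|t|^3\eps}{6}\sigma^2$, which vanishes as $\eps\downarrow 0$; hence $\sum_k|r_{n,k}|\to 0$.

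Finally I would compare $\prod_k\varphi_{n,k}(t)$ with $\prod_k\e^{-t^2\sigma_{n,k}^2/2}$. Since $|\varphi_{n,k}(t)|\leq 1$ and $0<\e^{-t^2\sigma_{n,k}^2/2}\leq 1$, the elementary inequality $\bigl|\prod_k z_k-\prod_k w_k\bigr|\leq\sum_k|z_k-w_k|$ for complex numbers of modulus at most $1$ applies, and, using $|1-x-\e^{-x}|\leq x^2/2$ for $x\geq 0$, one has $|\varphi_{n,k}(t)-\e^{-t^2\sigma_{n,k}^2/2}|\leq|r_{n,k}|+|1-\tfrac{t^2}{2}\sigma_{n,k}^2-\e^{-t^2\sigma_{n,k}^2/2}|\leq|r_{n,k}|+\tfrac{t^4}{8}\sigma_{n,k}^4$. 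Summing over $k$ and invoking $\sum_k|r_{n,k}|\to 0$ and $\sum_k\sigma_{n,k}^4\to 0$ gives $\prod_k\varphi_{n,k}(t)-\prod_k\e^{-t^2\sigma_{n,k}^2/2}\to 0$, while $\prod_k\e^{-t^2\sigma_{n,k}^2/2}=\e^{-\frac{t^2}{2}\sum_k\sigma_{n,k}^2}\to\e^{-\sigma^2 t^2/2}$ by~(a). This yields the convergence of characteristic functions, hence the claim. The only real obstacle beyond the textbook finite-row case is precisely the three bookkeeping points of the second paragraph (convergence of $S_n$, identification of its characteristic function with the infinite product, uniform control of all tail sums $\sum_{k\geq m}(\cdot)$), each of which follows cheaply from~(a); the remaining estimates are the standard Lindeberg computation.
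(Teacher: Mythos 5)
Your proof is correct, but it takes a genuinely different route from the paper's. The paper reduces to the classical finite-row Lindeberg theorem: for each $n$ it picks a truncation level $m_n$ so large that the tail $\sum_{k>m_n}\Var[\xi_{n,k}]\leq 1/n$, applies the textbook finite-row CLT to $\sum_{k\leq m_n}\xi_{n,k}$, kills the tail $\sum_{k>m_n}\xi_{n,k}$ in probability by Chebyshev, and finishes with Slutsky. You instead run the characteristic-function proof of Lindeberg directly on the infinite product: $L^2$-martingale convergence identifies $\E[\e^{\I t S_n}]$ with $\prod_{k=0}^\infty\varphi_{n,k}(t)$, the Lindeberg condition gives uniform negligibility $\max_k\sigma_{n,k}^2\to 0$ (hence $\sum_k\sigma_{n,k}^4\to 0$) and the Taylor-remainder sum $\sum_k|r_{n,k}|\to 0$, and the standard telescoping bound $\bigl|\prod z_k-\prod w_k\bigr|\leq\sum|z_k-w_k|$ (valid for infinitely many factors of modulus $\leq 1$) transfers to $\prod_k\e^{-t^2\sigma_{n,k}^2/2}\to\e^{-\sigma^2 t^2/2}$. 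The paper's route is shorter because it treats the finite-row CLT as a black box; yours is self-contained and shows that the infinite-row extension costs essentially nothing beyond the $L^2$-bound from condition (a), since every tail sum you need is uniformly controlled by it. Both are sound; yours trades reuse of a known theorem for transparency. One cosmetic remark: the statement's Lindeberg sum in (b) starts at $k=1$ while the variance sum in (a) and the conclusion start at $k=0$; you implicitly (and reasonably) read (b) as starting at $k=0$, which is surely the intended meaning.
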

\begin{proof}
We may assume that $\sigma^2=1$, otherwise divide the variables by $\sigma$. By (a), for every $n\in\N_0$ there exists an integer $m_n\geq n$ such that
\begin{equation}\label{eq:proof_lindeberg_1}
\sum_{k=m_n+1}^\infty \VAR[\xi_{n,k}] \leq \frac{1}{n}.
\end{equation}
Let us split the sum we are interested in as follows:
\begin{equation*}
	\sum_{k=0}^\infty \xi_{n,k}=\sum_{k=0}^{m_n} \xi_{n,k}+\sum_{k=m_n+1}^\infty \xi_{n,k}=:S_n+R_n.
\end{equation*}
It follows from Condition~(a) and~\eqref{eq:proof_lindeberg_1} that $\lim_{n\to\infty} \sum_{k=0}^{m_n} \VAR[\xi_{n,k}] =1$ and therefore the classical Lindeberg CLT (for finite rows) guarantees that $S_n$  converges to the standard normal distribution.
Using Slutsky's lemma it suffices to prove that $R_n \to 0$ in probability.  Chebyshev's inequality yields
\begin{align*}
\P{\l|\sum_{k=m_n+1}^\infty \xi_{n,k}\r|>\eps}
&\leq
\eps^{-2}\sum_{k=m_n+1}^\infty\VAR\l[\xi_{n,k}\r]
\leq
\frac{1}{n\eps^2}
\ton 0
\end{align*}
for every $\eps>0$, thus completing the proof.
\end{proof}

The next proposition is a multivariate Lindeberg CLT for arrays with infinite rows.

\begin{proposition}\label{lemma:multivariate_lindeberg_CLT}
Let $(V_{n,k})_{n,k \in  \N_0}$ be a triangular array whose $n$-th row  consists of countably many independent $\R^d$-valued random vectors $V_{n,1}, V_{n,2},\ldots$ with zero mean.
Assume that
\begin{itemize}
\item[(a)]
$\lim_{n\to\infty} \sum_{k=0}^\infty  \COV\l[V_{n,k}\r]
=\Sigma
$
for some symmetric and positive semi-definite $d\times d$-matrix $\Sigma=(r_{ij})_{i,j=1}^d$.
\item[(b)]
For all $\eps>0$,
$$
L_{n}(\eps):=\max_{i=1,\dots, d}\sum_{k=0}^\infty \EW{V^2_{n,k}(i)\ind_{|V_{n,k}(i)|>\eps}} \ton 0,
$$
 where $V_{n,k}= (V_{n,k}(1), \ldots, V_{n,k}(d))$ is the coordinate representation of the vector $V_{n,k}$.
\end{itemize}
Then,	writing $N_d[0,\Sigma]$ for a $d$-variate normal distribution with mean $0$ and covariance matrix $\Sigma$, we have
\begin{equation*}
	S_n:=\sum_{k=0}^\infty V_{n,k} \todistr N_d[0,\Sigma].
	\end{equation*}
\end{proposition}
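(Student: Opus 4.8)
The plan is to reduce the $d$-dimensional assertion to the one-dimensional Proposition~\ref{lemma:lindeberg_CLT}, which is already available, by the Cram\'er--Wold device. Fix $\theta=(\theta_1,\dots,\theta_d)\in\R^d$ and consider the scalar array $\xi_{n,k}:=\langle\theta,V_{n,k}\rangle=\sum_{i=1}^d\theta_i V_{n,k}(i)$; its rows are again independent with mean zero, and Condition~(a) guarantees that $\sum_k\COV[V_{n,k}]$ converges for each $n$, so that $\langle\theta,S_n\rangle=\sum_k\xi_{n,k}$ is well defined (the coordinate series converge a.s.\ by Kolmogorov's two-series theorem). Since $N_d[0,\Sigma]$ is characterised among probability measures on $\R^d$ by the fact that its one-dimensional projections $\langle\theta,\cdot\rangle$ are $N(0,\theta^\top\Sigma\,\theta)$, it is enough to prove $\sum_k\xi_{n,k}\todistr N(0,\sigma_\theta^2)$ with $\sigma_\theta^2:=\theta^\top\Sigma\,\theta$ for every $\theta$.

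First I would verify hypothesis~(a) of Proposition~\ref{lemma:lindeberg_CLT} for $(\xi_{n,k})$: by bilinearity of the covariance, $\sum_{k\geq0}\VAR[\xi_{n,k}]=\theta^\top\big(\sum_{k\geq0}\COV[V_{n,k}]\big)\theta\to\sigma_\theta^2$ as $n\to\infty$. When $\sigma_\theta^2>0$ this is exactly what Proposition~\ref{lemma:lindeberg_CLT}(a) requires. The degenerate directions $\theta$ with $\sigma_\theta^2=0$ (including $\theta=0$) fall outside the scope of that proposition, but there the claim is trivial: $\VAR[\sum_k\xi_{n,k}]=\sum_k\VAR[\xi_{n,k}]\to0$, so $\sum_k\xi_{n,k}\to0$ in probability by Chebyshev's inequality, which is convergence to $N(0,0)$. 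Hence I may assume $\theta\neq0$ and $\sigma_\theta^2>0$ from now on.

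It then remains to transfer the Lindeberg condition~(b). Here I would use the elementary inequalities $\xi_{n,k}^2\leq\|\theta\|^2\sum_{j}V_{n,k}(j)^2\leq d\|\theta\|^2\max_j V_{n,k}(j)^2$ together with the observation that $\{|\xi_{n,k}|>\eps\}\subseteq\{\max_j|V_{n,k}(j)|>\eps'\}$ for $\eps':=\eps/(\sqrt d\,\|\theta\|)$, so that on this event $\max_j V_{n,k}(j)^2\leq\sum_j V_{n,k}(j)^2\ind_{\{|V_{n,k}(j)|>\eps'\}}$ (the maximum is attained at an index where the coordinate already exceeds $\eps'$). Summing over $k$ and taking expectations gives
\[
\sum_{k=0}^\infty\EW{\xi_{n,k}^2\,\ind_{\{|\xi_{n,k}|>\eps\}}}\leq d\|\theta\|^2\sum_{j=1}^d\sum_{k=0}^\infty\EW{V_{n,k}(j)^2\,\ind_{\{|V_{n,k}(j)|>\eps'\}}}\leq d^2\|\theta\|^2\,L_n(\eps'),
\]
which tends to $0$ by Condition~(b). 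Proposition~\ref{lemma:lindeberg_CLT} then yields $\sum_k\xi_{n,k}\todistr N(0,\sigma_\theta^2)$, and the Cram\'er--Wold device finishes the proof. I expect the only point requiring a little care to be this last bookkeeping step---relating the scalar truncation event $\{|\xi_{n,k}|>\eps\}$ to the coordinatewise truncation events entering $L_n(\cdot)$---together with remembering to dispose of the degenerate directions separately; the argument uses no idea beyond the one-dimensional case.
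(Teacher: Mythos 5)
Your proof is correct and follows essentially the same route as the paper: a Cram\'er--Wold reduction to the one-dimensional Lindeberg CLT (Proposition~\ref{lemma:lindeberg_CLT}), with bilinearity handling condition~(a) and a pointwise bound on $\xi_{n,k}^2\ind_{\{|\xi_{n,k}|>\eps\}}$ in terms of the coordinatewise truncated second moments. The paper's bookkeeping for the Lindeberg condition goes through the intermediate quantity $W_{n,k}=\max_j V_{n,k}^2(j)$ and two applications of Cauchy--Schwarz; yours shortcuts to the same bound $d^2\|\theta\|^2 L_n(\eps')$ with $\eps'=\eps/(\sqrt d\,\|\theta\|)$ via the observation that on $\{\max_j|V_{n,k}(j)|>\eps'\}$ the maximum contributes to the truncated sum, so the two arguments are the same idea dressed differently. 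One small plus on your side: you explicitly dispose of the directions with $\theta^\top\Sigma\,\theta=0$ (including $\theta=0$), where the hypothesis $\sigma^2\in(0,\infty)$ of Proposition~\ref{lemma:lindeberg_CLT} fails; the paper applies its one-dimensional lemma without comment, which is a minor loose end, and your Chebyshev argument closes it cleanly.
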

\begin{proof}
	Let $Z=(Z_1,\ldots,Z_d)$ be an $\R^d$-valued random vector with $Z\sim N_d[0,\Sigma]$.
	By the Cram\'er--Wold device it suffices to show that for every $(v_1,\ldots,v_d)\in \R^d$,
	\begin{equation}\label{eq:aquivalenteBed}
	\sum_{i=1}^d S_n(i) v_i \todistr  \sum_{i=1}^d Z_i v_i \sim N\l[0,\sum_{i,j=1}^d v_iv_j r_{ij}\r].
	\end{equation}
To prove~\eqref{eq:aquivalenteBed}, we apply the univariate Lindeberg CLT stated in Proposition~\ref{lemma:lindeberg_CLT} to the random variables
\begin{equation*}
\xi_{n,k}:=\sum_{i=1}^d V_{n,k}(i)v_i, \quad n,k\in\N_0.
\end{equation*}
Since $V_{n,k}$ is centered, we have $\E V_{n,k}(i)=0$ and hence, $\E \xi_{n,k}=0$. Furthermore, it follows from condition (a) that
	\begin{align*}
	\sum_{k=0}^\infty \VAR{\xi_{n,k}}
	= \sum_{k=0}^\infty\VAR\l[\sum_{i=1}^d V_{n,k}(i)v_i\r]
	= \sum_{k=0}^\infty \sum_{i,j=1}^d v_iv_j
     	\COV\l[V_{n,k}(i),V_{n,k}(j)\r]
	\ton \sum_{i,j=1}^d  v_iv_j r_{ij}.
	\end{align*}
	It remains to verify condition (b) of Proposition~\ref{lemma:lindeberg_CLT}. For $n,k\in \N_0$ write
	\begin{equation*}
	W_{n,k}:=\max_{j=1,\dots, d} V_{n,k}^2(j).
	\end{equation*}
	Let $\|v\|$ be the Euclidean norm of $v\in\R^d$. Applying the Cauchy--Schwarz inequality several times we obtain
	\begin{align*}
	\sum_{k=0}^\infty \EW{\xi_{n,k}^2\ind_{\{|\xi_{n,k}|>\eps\}}}
	&=\sum_{k=0}^\infty \EW{\l(\sum_{j=1}^d V_{n,k}(j)v_j\r)^2\ind_{\l\{\l(\sum_{j=1}^d
	V_{n,k}(j)v_j\r)^2>\eps^2\r\}}}\\
	&\leq\Vert v \Vert^2 \sum_{j=1}^d\sum_{k=0}^\infty \EW{ V^2_{n,k}(j)\ind_{\l\{\sum_{l=1}^d
	V^2_{n,k}(l)>\eps^2\Vert v\Vert^{-2}\r\}}} \\
	&\leq\Vert v \Vert^2d\sum_{k=0}^\infty \EW{W_{n,k}\ind_{\l\{W_{n,k}>\eps^2d^{-1}\Vert
	v\Vert^{-2}\r\}}} \\
	&\leq\Vert v \Vert^2d\sum_{k=0}^\infty
	\sum_{j=1}^d\EW{V_{n,k}^2(j)\ind_{\l\{V^2_{n,k}(j)>\eps^2d^{-1}\Vert v\Vert^{-2}\r\}}} \\
	&\leq\Vert v \Vert^2d^2L_n\l(\frac{\eps^2}{d\Vert v\Vert^2}\r) \ton 0.
	\end{align*}
The Lindeberg condition is thus verified and~\eqref{eq:aquivalenteBed} follows from Proposition~\ref{lemma:lindeberg_CLT}. Since $v\in\R^d$ was arbitrary, the proof is complete.
\end{proof}

\subsection{A uniform estimate for power series} In this section we prove the estimate~\eqref{eq:uniform_abel}.
\begin{lemma}\label{lem:uniform_abel}
Assume that $c_k\neq 0$ for all $k\in\N_0$ and, as always, that~\eqref{eq:c_k_reg_var} holds. There exist constants $C>0$ and $n_0\in \N$ such that
\begin{equation*}
\sum_{j=k}^{\infty} \binom{j}{k}\left|\frac{c_j}{c_k}\right|\l(1-q^{n+2}\r)^{j-k}
\leq \e^{C n(k+1)}
\end{equation*}
for every $k\in\N_0$ and $n\geq n_0$. 
\end{lemma}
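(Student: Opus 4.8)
The plan is to bound the series $\sum_{j=k}^\infty \binom{j}{k} |c_j/c_k| (1-q^{n+2})^{j-k}$ by relating it to a derivative of the variance function $v$. Writing $\rho_n := 1-q^{n+2} \in (0,1)$ and recalling that $v(z) = \sum_{j=0}^\infty c_j^2 z^{2j}$, I first observe by the Cauchy--Schwarz inequality that
\begin{equation*}
\sum_{j=k}^\infty \binom{j}{k} |c_j| \rho_n^{j-k}
\leq \left(\sum_{j=k}^\infty \binom{j}{k} c_j^2 \rho_n^{2(j-k)}\right)^{1/2}
\left(\sum_{j=k}^\infty \binom{j}{k} \rho_n^{2(j-k)}\right)^{1/2}.
\end{equation*}
The second factor is, up to combinatorial constants, the Taylor coefficient expansion of $(1-\rho_n^2)^{-(k+1)}$; more precisely $\sum_{j\geq k}\binom{j}{k}x^{j-k} = (1-x)^{-(k+1)}$, so it equals $(1-\rho_n^2)^{-(k+1)/2}$. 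The first factor is comparable to $(v^{(k)}(\rho_n))^{1/2}$ after accounting for the chain rule and the $k!$ factors. Hence, after dividing by $|c_k|$, the task reduces to controlling $v^{(k)}(\rho_n)/(k! |c_k|^2)$ and the elementary factor $(1-\rho_n^2)^{-(k+1)/2}$.

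The elementary factor is easy: $1-\rho_n^2 = 1-(1-q^{n+2})^2 \asymp q^{n+2} \asymp q^n$, so $(1-\rho_n^2)^{-(k+1)/2} \leq \e^{Cn(k+1)}$ for a suitable $C$, which is of the desired form. For the analytic factor, I would use the Cauchy integral representation of $v^{(k)}$: choosing a circle of radius $\asymp q^n$ around $\rho_n$ staying inside the unit disk (this is where $q > -2+\sqrt 7$, or whatever margin, enters, to keep the circle in the region where Theorem~\ref{theo:abel} applies), Cauchy's estimate gives $|v^{(k)}(\rho_n)| \leq k!\, r_n^{-k} \sup_{|z-\rho_n|=r_n} |v(z)|$. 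By Theorem~\ref{theo:abel}, $\sup |v(z)| \leq C (q^n)^{-\gamma} L(1/q^n)$, and $r_n^{-k} \leq (c q^n)^{-k} = \e^{Ckn + \dots}$. Combining, $v^{(k)}(\rho_n)/k! \leq \e^{Cn(k+1)}$ (absorbing the slowly varying $L$ and the fixed power $q^{-\gamma n}$ into the constant, using that $L(t) = t^{o(1)}$ by Potter's bound). Finally $|c_k|^2 = k^{\gamma-1}L(k)/\Gamma(\gamma) \geq \e^{-Ck} \geq \e^{-Cn(k+1)}$ using $c_k \neq 0$ and regular variation (for small $k$ one just uses that finitely many nonzero $c_k$ are bounded below). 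Putting the three bounds together yields the claim.

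The main obstacle I anticipate is bookkeeping the constants uniformly in \emph{both} $k$ and $n$ simultaneously: the Cauchy estimate produces a factor like $r_n^{-k}$ with $r_n \asymp q^n$, giving $\e^{k n \log(1/q)}$, which is exactly the shape $\e^{Cn(k+1)}$ allows — so the exponent is tight and one must be careful that no hidden factor grows faster than $\e^{Cnk}$ (e.g. the $L$ terms must genuinely be handled by Potter's bound rather than crudely, and the radius $r_n$ must be chosen $\asymp q^n$, not smaller). A secondary subtlety is ensuring the Cauchy contour of radius $\asymp q^n$ around $1-q^{n+2}$ lies in the open unit disk and in the domain where the Abelian asymptotics of Theorem~\ref{theo:abel} hold uniformly; this forces the radius to be a small enough multiple of $q^n$, and one should check the elementary inequality $q^{n+2} - r_n > 0$, i.e. that $1-q^{n+2}-r_n$ and $1-q^{n+2}+r_n$ stay safely inside $(0,1)$, which holds for all large $n$ once the multiplicative constant in $r_n$ is fixed appropriately.
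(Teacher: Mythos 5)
Your overall strategy (Cauchy--Schwarz to decouple the $|c_j|$ from the combinatorial weight, then Cauchy's integral estimate for the resulting generating-function factor) is genuinely different from the paper's, which is an elementary computation: $\binom jk\leq j^k/k!$, Potter's bound on $c_j/c_k$, unimodality of $z\mapsto z^{k+\gamma_2}(1-q^{n+2})^z$, a Riemann-sum-versus-integral comparison, and Stirling. Both routes are viable, and yours has the conceptual merit of reducing everything to one Cauchy estimate. However, as written there is a genuine error: the displayed Cauchy--Schwarz inequality
\[
\sum_{j\geq k}\binom jk |c_j|\rho_n^{j-k}\;\leq\;\Bigl(\sum_{j\geq k}\binom jk c_j^2\rho_n^{2(j-k)}\Bigr)^{1/2}\Bigl(\sum_{j\geq k}\binom jk \rho_n^{2(j-k)}\Bigr)^{1/2}
\]
is \emph{false}. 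Already for $k=0$ and $c_j\equiv 1$ the left side is $(1-\rho_n)^{-1}$ while the right side is $(1-\rho_n^2)^{-1}<(1-\rho_n)^{-1}$. The factorization you need is $\binom jk|c_j|\rho_n^{j-k}=\bigl(\sqrt{\binom jk}|c_j|\rho_n^{(j-k)/2}\bigr)\cdot\bigl(\sqrt{\binom jk}\rho_n^{(j-k)/2}\bigr)$, which gives the \emph{correct} bound with $\rho_n^{j-k}$ (not $\rho_n^{2(j-k)}$) in both factors:
\[
\sum_{j\geq k}\binom jk |c_j|\rho_n^{j-k}\;\leq\;\Bigl(\sum_{j\geq k}\binom jk c_j^2\rho_n^{j-k}\Bigr)^{1/2}\bigl((1-\rho_n)^{-(k+1)}\bigr)^{1/2}.
\]

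Once corrected, a secondary fix is that the first factor is not a derivative of $v$ but of $w(z):=\sum_{j\geq 0}c_j^2z^j=v(\sqrt z)$: namely $\sum_{j\geq k}\binom jk c_j^2\rho_n^{j-k}=w^{(k)}(\rho_n)/k!$. The Cauchy estimate then applies to $w$ on a circle of radius $r_n=\tfrac12 q^{n+2}$ around $\rho_n=1-q^{n+2}$, giving $w^{(k)}(\rho_n)/k!\leq r_n^{-k}\,w(\rho_n+r_n)$ (since $w$ has non-negative coefficients, $\sup_{|\zeta-\rho_n|=r_n}|w(\zeta)|=w(\rho_n+r_n)$, so no complex Abelian theorem is needed). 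Combined with $w(1-a)\sim a^{-\gamma}L(1/a)$, Potter's bound on $L$, $(1-\rho_n)^{-(k+1)/2}=q^{-(n+2)(k+1)/2}$, and the lower bound $|c_k|\geq ck^{(\gamma-1-\eps)/2}$ (plus $c_k\neq 0$ for the finitely many small $k$), everything lands at $\e^{Cn(k+1)}$ uniformly in $k$ and $n\geq n_0$. Also, the parenthetical about $q>-2+\sqrt 7$ is a red herring: no constraint on $q\in(0,1)$ is needed for this lemma, since the disc of radius $\tfrac12 q^{n+2}$ about $1-q^{n+2}$ lies in the unit disc for every such $q$.
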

\begin{proof}
For every fixed $k\in\N_0$, it is possible to deduce the claim of the lemma from Theorem~\ref{theo:abel}. What makes the following proof difficult, is the necessity to obtain an estimate which is uniform in $k$.  
For all $j,k\in \N_0$ with $j\geq k$ we have
\begin{equation}\label{eq:est_binomial}
\binom{j}{k}\leq \frac{j^k}{k!}.
\end{equation}
Fix some sufficiently small $\eps>0$ and introduce the abbreviations
\begin{equation}\label{key}
\gamma_1:=\frac{\gamma-1-\eps}{2} \quad \textrm{and} \quad
\gamma_2:=\frac{\gamma-1+\eps}{2}.
\end{equation}
The case $k=0$ follows directly from Theorem~\ref{theo:abel}, so let $k\geq 1$ in the following. Using \eqref{eq:est_binomial} and \eqref{eq:c_k_reg_var} we arrive at 
\begin{equation}\label{eq:first_estimate}
\sum_{j=k}^{\infty} \binom{j}{k}\left|\frac{c_j}{c_k}\right|\l(1-q^{n+2}\r)^{j-k}  \\
\leq
C\frac{\l(1-q^{n+2}\r)^{-k}}{k^{\gamma_1}k!}\sum_{j=k}^{\infty} j^{k+\gamma_2}\l(1-q^{n+2}\r)^{j}.
\end{equation}
Observe that the function $z\mapsto z^{k+\gamma_2}\l(1-q^{n+2}\r)^{z}$, $z\geq 0$, is unimodal (first increasing, then decreasing) and the point, where it attains its maximum, satisfies
\begin{equation}\label{eq:est_jnk}
j_{n,k}:=\arg\max_{z\geq 0} z^{k+\gamma_2}(1-q^{n+2})^z =-\frac{k+\gamma_2}{\log(1-q^{n+2})}\leq  (k+\gamma_2)q^{-(n+2)} 
\end{equation}
for all $k\in\N$, $n\geq n_0$, 
where we have used that $-\frac{1}{\log(1-x)}\leq \frac{1}{x}$ for all $0\leq x<1$.
Using the unimodality and estimating Riemann sums by Riemann integrals, we arrive at
\begin{equation}\label{eq:est_sumbyint}
\sum_{j=k}^\infty j^{k+\gamma_2}\l(1-q^{n+2}\r)^j \\
\leq 2j_{n,k}^{k+\gamma_2} \l(1-q^{n+2}\r)^{j_{n,k}}+\INT{0}{\infty}
x^{k+\gamma_2}\l(1-q^{n+2}\r)^x \D x.
\end{equation}

\vspace*{2mm}
\noindent
\textit{First term on the right-hand side of \eqref{eq:est_sumbyint}.}
Using \eqref{eq:est_jnk}  leads to
\begin{align}
C\frac{(1-q^{n+2})^{-k}}{k^{\gamma_1}k!} j_{n,k}^{k+\gamma_2} \l(1-q^{n+2}\r)^{j_{n,k}} 
&\leq C \frac{(k+\gamma_2)^{k+\gamma_2}(1-q^{n+2})^{j_{n,k}-k}}{k^{\gamma_1}k!}q^{-(n+2)(k+\gamma_2)} \label{eq:est_firsttermsum}\\
&\leq C \frac{(k+\gamma_2)^{k+\gamma_2}}{k^{\gamma_1}k!}q^{-(n+2)(k+\gamma_2)} \notag
\end{align}
since $(1-q^{n+2})^{j_{n,k}-k}\leq 1$ since $j_{n,k}\geq k$ for $n\geq n_0$ by~\eqref{eq:est_jnk}. By the Stirling formula, the right-hand side can
be estimated by $\e^{Cn(k+1)}$.
 
\vspace*{2mm}
\noindent
\textit{Second term on the right-hand side of \eqref{eq:est_sumbyint}.} Using the substitution $u=xq^{n+2}$ and the inequality $(1-x)^{1/x} \leq \e^{-1}$, $x\in (0,1)$, we obtain
\begin{align*}
\INT{0}{\infty}
x^{k+\gamma_2}\l(1-q^{n+2}\r)^x \D x 
& =
q^{-(n+2)(k+\gamma_2+1)}\INT{0}{\infty}
u^{k+\gamma_2}\l(1-q^{n+2}\r)^{uq^{-(n+2)}} \D u \\
& \leq q^{-(n+2)(k+\gamma_2+1)}\INT{0}{\infty}
u^{k+\gamma_2}\e^{-u} \D u \\
&= q^{-(n+2)(k+\gamma_2+1)} \Gamma(k+\gamma_2+1).
\end{align*}
Hence, 
\begin{align}\label{eq:est_secondtermsum}
\begin{split}
&C\frac{\l(1-q^{n+2}\r)^{-k}}{k^{\gamma_1}k!} \INT{0}{\infty} x^{k+\gamma_2}\left(1-q^{n+2}\right)^x \D x \\
&\quad \leq C (1-q^{n+2})^{-k} q^{-(n+2)(k+\gamma_2+1)}
\frac{\Gamma(k+\gamma_2+1)}{k^{\gamma_1}k!}\\
&\quad \leq C  2^k  q^{-(n+2)(k+\gamma_2+1)}
\frac{\Gamma(k+\gamma_2+1)}{k^{\gamma_1}k!}\\
&\quad \leq \e^{Cn(k+1)},
\end{split}
\end{align}
where in the last line we have used that $\Gamma(x+s) \sim x^s\Gamma(x)$ as $x\to +\infty$ for every fixed $s$.

\vspace*{2mm}
Combining~\eqref{eq:first_estimate}, \eqref{eq:est_sumbyint}, \eqref{eq:est_firsttermsum} and \eqref{eq:est_secondtermsum} yields the statement of the lemma.
\end{proof}

\subsection{Estimates for \texorpdfstring{$\tilde F_{n,k}$}{tilde F(n,k)}}
The main result of this section is Lemma~\ref{lemma:asymptotik_Fnk} which provides estimates for the quantities
$$
\tilde{F}_{n,k} = \sum_{j=k}^\infty a_{n,j}^2 = \sum_{j=k}^\infty \frac{(1-q^n)^{2j}c^2_j}{v(1-q^n)}.
$$

\begin{lemma}\label{lemma:uniformconvergenceexponentialfunction}
Let $(A_n)_{n\in\N}$ be any sequence of positive real numbers such that $A_n = o(n)$ as $n\to\infty$.
Then,
\begin{equation*}
\lim_{n\to\infty} \sup_{x\in [0,A_n]}\l|\l(1-\frac{1}{n}\r)^{nx}\e^x-1\r|=0.
\end{equation*}
\end{lemma}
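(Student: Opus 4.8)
The plan is to take logarithms and reduce everything to the elementary estimate $n\log(1-1/n)+1 = O(1/n)$, after which the uniform convergence is immediate.

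First I would restrict to $n\geq 2$, so that $1-\tfrac1n\in(0,1)$ and, for every $x\geq 0$,
$$
\Bigl(1-\tfrac1n\Bigr)^{nx}\e^x = \exp\Bigl(nx\log\bigl(1-\tfrac1n\bigr)+x\Bigr) = \e^{x\,g_n},
\qquad g_n := n\log\Bigl(1-\tfrac1n\Bigr)+1 .
$$
The next step is to bound $g_n$. Expanding the logarithm, $n\log(1-\tfrac1n) = -1-\tfrac1{2n}-\tfrac1{3n^2}-\cdots$, so that
$$
g_n = -\sum_{k\geq 2}\frac{1}{k n^{k-1}}, \qquad |g_n|\leq \frac12\sum_{k\geq 2} n^{-(k-1)} = \frac{1}{2(n-1)}\leq \frac1n\quad\text{for }n\geq 2 .
$$
(In particular $g_n<0$, but only the size of $g_n$ will be used.)

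Finally, using the inequality $|\e^y-1|\leq \e^{|y|}-1$, valid for all $y\in\R$, I would estimate, for every $x\in[0,A_n]$,
$$
\Bigl|\bigl(1-\tfrac1n\bigr)^{nx}\e^x-1\Bigr| = \bigl|\e^{x g_n}-1\bigr| \leq \e^{A_n|g_n|}-1 \leq \e^{A_n/n}-1 .
$$
The right-hand side is independent of $x$, and it tends to $0$ because $A_n/n\to 0$ by hypothesis; taking the supremum over $x\in[0,A_n]$ and letting $n\to\infty$ then gives the claim.

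There is no genuine obstacle here: the argument is entirely elementary. The only point that needs a little attention is that the bound on $g_n$ must be of order $1/n$, not merely $o(1)$ — it is precisely this, combined with the assumption $A_n=o(n)$, that forces the product $A_n|g_n|$, and hence the whole supremum, to vanish.
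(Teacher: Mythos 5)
Your argument is correct and follows essentially the same route as the paper: pass to the logarithm, isolate the quantity $g_n = n\log(1-\tfrac1n)+1$, and combine the bound $|g_n|=O(1/n)$ with the hypothesis $A_n=o(n)$. The only difference is cosmetic — you make the return from the logarithmic to the multiplicative form explicit via $|\e^y-1|\leq\e^{|y|}-1$ and give a clean closed-form bound $|g_n|\leq\tfrac1n$, whereas the paper invokes the asymptotic expansion $g_n=-\tfrac1{2n}+o(1/n)$ and leaves the final step implicit.
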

\begin{proof}
By taking the logarithm, it suffices to show that
\begin{equation*}
\lim_{n\to\infty} \sup_{x\in [0,A_n]} \l|nx\log\l(1-\frac{1}{n}\r)+x\r|= 0
\end{equation*}
or, equivalently,
\begin{equation*}
\lim_{n\to\infty} A_n  \l|n\log\l(1-\frac{1}{n}\r)+1\r|= 0.
\end{equation*}
The claim follows from the expansion
\begin{equation*}\label{eq:logminuseinsdurchn_laurentseries}
n \log\l(1-\frac{1}{n}\r) + 1  = -\frac{1}{2n}+o\l(\frac{1}{n}\r), \quad \textrm{as } n\to\infty,
\end{equation*}
together with the assumption $A_n= o(n)$.
\end{proof}
\begin{lemma}\label{lemma:asympt_incompl_gamma}
Let $\alpha\in\R$. We have
$$
\INT{z}{\infty }x^{\alpha}\e^{-2x}\D x  \sim \frac 12 z^{\alpha} \e^{-2z}, \quad \text{as }  z\to +\infty.
$$
\end{lemma}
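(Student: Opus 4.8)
The plan is to reduce the claim to a routine application of the dominated convergence theorem. Substituting $x = z + s$ we obtain, for $z>0$,
\begin{equation*}
\INT{z}{\infty} x^\alpha \e^{-2x}\D x = \e^{-2z}\INT{0}{\infty} (z+s)^\alpha \e^{-2s}\D s = z^\alpha \e^{-2z}\INT{0}{\infty} \l(1+\frac sz\r)^\alpha \e^{-2s}\D s,
\end{equation*}
so it suffices to show that $g(z):=\INT{0}{\infty} (1+s/z)^\alpha \e^{-2s}\D s$ converges to $1/2$ as $z\to+\infty$.

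For each fixed $s>0$ we have $(1+s/z)^\alpha \to 1$ as $z\to\infty$. To dominate the integrand uniformly for $z\geq 1$, observe that if $\alpha\geq 0$ then $(1+s/z)^\alpha \leq (1+s)^\alpha$, while if $\alpha<0$ then $(1+s/z)^\alpha \leq 1$; in either case $(1+s/z)^\alpha \leq (1+s)^{|\alpha|}$. Since $s\mapsto (1+s)^{|\alpha|}\e^{-2s}$ is integrable on $(0,\infty)$, the dominated convergence theorem yields $\lim_{z\to+\infty} g(z) = \INT{0}{\infty} \e^{-2s}\D s = 1/2$, which is the assertion of the lemma.

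There is no genuine obstacle here: the only point requiring (minor) care is the choice of a dominating function valid for both signs of $\alpha$, which is handled by the case distinction above. Alternatively, one could integrate by parts once to get
$$
\INT{z}{\infty} x^\alpha \e^{-2x}\D x = \frac12 z^\alpha \e^{-2z} + \frac{\alpha}{2}\INT{z}{\infty} x^{\alpha-1}\e^{-2x}\D x,
$$
and the same substitution shows the remainder integral is $O\l(z^{\alpha-1}\e^{-2z}\r) = o\l(z^\alpha \e^{-2z}\r)$ as $z\to+\infty$; I prefer the first argument as it is marginally shorter.
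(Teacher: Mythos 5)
Your proof is correct, but it takes a different route from the paper. The paper's proof is a one-liner: take the quotient of the two sides and apply L'H\^{o}pital's rule (both tend to $0$ as $z\to+\infty$; differentiating the numerator gives $-z^\alpha\e^{-2z}$ and differentiating the denominator gives $\tfrac12 z^{\alpha-1}\e^{-2z}(\alpha-2z)$, so the ratio of derivatives is $-2z/(\alpha-2z)\to 1$). Your substitution-plus-dominated-convergence argument is slightly longer but has the advantage of not requiring you to differentiate the integral or check the $0/0$ indeterminacy; it also gives an explicit dominating function, which would be reusable if one later needed uniformity in $\alpha$ over a compact set. The case distinction on $\operatorname{sgn}\alpha$ for the domination bound, and the restriction to $z\geq 1$, are handled correctly. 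Your alternate integration-by-parts argument is also valid and is in fact closest in spirit to how such asymptotics for the incomplete Gamma function are usually derived. All three proofs are fine; the paper simply chose the shortest.
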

\begin{proof}
Take the quotient of both expressions and apply L'H\^{o}pital's rule.
\end{proof}
\begin{lemma}\label{lemma:riemsumm_asymptotik}
Let $\alpha\in\R, q\in(0,1)$ and
$(y_n)_{n\in\N}$ be a sequence of positive real numbers such that $y_n\to +\infty$ but
$y_n = o(q^{-n/2})$ as $n\to\infty$. Then,
\begin{equation}\label{eq:proofwithoutL}
q^n\sum_{\substack{x\in q^n\N\\ x\geq y_n}} x^{\alpha}(1-q^n)^{2xq^{-n}}\sim \INT{y_n}{\infty }x^{\alpha}\e^{-2x}\D x \quad \textrm{as }n\to\infty.
\end{equation}
\end{lemma}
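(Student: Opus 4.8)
The plan is to compare both sides to the Riemann sum $q^n\sum_{x\in q^n\N,\,x\geq y_n}x^\alpha\e^{-2x}$ of the function $g(x):=x^\alpha\e^{-2x}$, using that by Lemma~\ref{lemma:asympt_incompl_gamma} both sides have the precise order $\tfrac12 y_n^\alpha\e^{-2y_n}$; consequently every error term must be shown to be of strictly smaller order than $y_n^\alpha\e^{-2y_n}$, not merely $o(1)$ in absolute value. First I would fix a truncation level $M_n$ with $y_n=o(M_n)$ and $M_n=o(q^{-n})$ --- for instance $M_n:=q^{-n/2}$, which is admissible because $y_n=o(q^{-n/2})$ by hypothesis and $q^{-n/2}=o(q^{-n})$ --- and split the sum, and likewise the integral $\int_{y_n}^\infty g(x)\,\d x$, into the ranges $[y_n,M_n]$ and $(M_n,\infty)$.

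On the tail $(M_n,\infty)$ the elementary inequality $(1-q^n)^{q^{-n}}\leq\e^{-1}$ gives $(1-q^n)^{2xq^{-n}}\leq\e^{-2x}$, so each term of the sum is at most $q^n g(x)$; since $g$ is eventually decreasing, the tail sum is at most $\int_{M_n-q^n}^\infty g(x)\,\d x$, which by Lemma~\ref{lemma:asympt_incompl_gamma} is $\sim\tfrac12 M_n^\alpha\e^{-2M_n}$. The ratio of $M_n^\alpha\e^{-2M_n}$ to $y_n^\alpha\e^{-2y_n}$ equals $(M_n/y_n)^\alpha\e^{-2(M_n-y_n)}$ and tends to $0$, since $M_n-y_n\to\infty$ while the prefactor grows only polynomially; the tail of the integral is estimated in the same way. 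Hence both tails are $o\!\bigl(\int_{y_n}^\infty g(x)\,\d x\bigr)$ and may be discarded.

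On the window $[y_n,M_n]$ I would replace $(1-q^n)^{2xq^{-n}}$ by $\e^{-2x}$: since $\log\!\bigl((1-q^n)^{2xq^{-n}}\e^{2x}\bigr)=2x\bigl(q^{-n}\log(1-q^n)+1\bigr)$ and $q^{-n}\log(1-q^n)+1=-\tfrac{q^n}{2}+O(q^{2n})$, this logarithm is $O(xq^n)$, which is $o(1)$ uniformly for $x\leq M_n=o(q^{-n})$ (the same computation underlying Lemma~\ref{lemma:uniformconvergenceexponentialfunction}). Therefore $q^n\sum_{x\in q^n\N,\,y_n\leq x\leq M_n}x^\alpha(1-q^n)^{2xq^{-n}}=(1+o(1))\,q^n\sum_{x\in q^n\N,\,y_n\leq x\leq M_n}g(x)$. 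It then remains to compare this truncated Riemann sum with $\int_{y_n}^{M_n}g(x)\,\d x$; since $g$ is monotone decreasing on this range for $n$ large (recall $y_n\to\infty$), the per-subinterval errors telescope, and the total discrepancy --- including the $O(1)$ boundary terms coming from the discretisation of the endpoints --- is $O\!\bigl(q^n y_n^\alpha\e^{-2y_n}\bigr)=o\!\bigl(\int_{y_n}^\infty g(x)\,\d x\bigr)$ because $q^n\to0$. Finally $\int_{y_n}^{M_n}g(x)\,\d x=\int_{y_n}^\infty g(x)\,\d x-O\!\bigl(M_n^\alpha\e^{-2M_n}\bigr)=(1+o(1))\int_{y_n}^\infty g(x)\,\d x$, and assembling the three ranges gives the assertion.

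The main obstacle is that the target quantity $\tfrac12 y_n^\alpha\e^{-2y_n}$ may be super-exponentially small in $n$ --- $y_n$ is allowed to be almost as large as $q^{-n/2}$ --- so the naive principle that Riemann sums converge to their integral is by itself useless; what rescues the argument is, first, that the multiplicative approximation $(1-q^n)^{2xq^{-n}}\approx\e^{-2x}$ holds with \emph{relative} error $o(1)$ on a window $[y_n,M_n]$ that still comfortably contains $y_n$, the slack between $y_n=o(q^{-n/2})$ and the admissible cutoff $o(q^{-n})$ being exactly what makes this possible, and second, that the Riemann-sum error for a monotone integrand telescopes to a boundary term carrying the extra vanishing factor $q^n$.
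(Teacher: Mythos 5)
Your proposal is correct and uses essentially the same ingredients, and in essentially the same way, as the paper's proof: the truncation at $M_n=q^{-n/2}$, the crude bound $(1-q^n)^{2xq^{-n}}\leq\e^{-2x}$ on the tail, the uniform multiplicative approximation $(1-q^n)^{2xq^{-n}}\e^{2x}=1+o(1)$ on $[y_n,q^{-n/2}]$ (the content of Lemma~\ref{lemma:uniformconvergenceexponentialfunction}), and the telescoping Riemann-sum comparison for the eventually decreasing integrand, with all errors tracked against the true order $y_n^\alpha\e^{-2y_n}$ from Lemma~\ref{lemma:asympt_incompl_gamma}. The only difference is organizational — you truncate first and then handle both the substitution error and the discretisation error on the window, whereas the paper first separates the substitution error from the Riemann-sum error via the triangle inequality and then truncates inside the second piece — but the estimates performed are the same.
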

\begin{proof}
Splitting up the difference, we obtain
\begin{multline}\label{eq:split_uctone}
\l|q^n \sum_{\substack{x\in q^n\N \\ x\geq y_n}}
x^{\alpha}(1-q^n)^{2xq^{-n}}-\INT{y_n}{\infty}x^{\alpha}\e^{-2x}\D x\r| \\
 \leq
\l|q^n \sum_{\substack{x\in q^n\N \\ x\geq y_n}}x^{\alpha}\e^{-2x}-
\INT{y_n}{\infty}x^{\alpha}\e^{-2x}\D x\r|
 + q^n \sum_{\substack{x\in q^n\N \\ x\geq y_n}}
x^{\alpha}\l|(1-q^n)^{2xq^{-n}} -\e^{-2x}\r|.
\end{multline}
By Lemma~\ref{lemma:asympt_incompl_gamma} it suffices to show that both terms on the right-hand side are
of order $o(y_n^{\alpha}\e^{-2y_n})$.

\vspace*{2mm}
\noindent
\emph{First term on the right-hand side of \eqref{eq:split_uctone}.}
Since $y_n\to +\infty$ as $n\to\infty$, the function  $x\mapsto x^{\alpha}\EXP{-2x}$ is monotone decreasing for $x\geq y_n-q^n$ and $n$ sufficiently large. Estimating Riemann integrals by Riemann sums, we get
\begin{align}\label{eq:est_riemannsummandintegral}
\lefteqn{\l|q^n \sum_{\substack{x\in q^n\N \\ x\geq y_n}}x^{\alpha}\e^{-2x}-
\INT{y_n}{\infty}x^{\alpha}\e^{-2x }\D x\r|} \\
&\leq2\INT{y_n}{y_n+q^n} x^{\alpha}\e^{-2x }\D x +q^n \sum_{\substack{x\in q^n\N \\ x\geq y_n}}x^{\alpha}\e^{-2x }
-\INT{y_n}{\infty}x^{\alpha}\e^{-2x }\D x \notag\\
&\leq 2q^n y_n^{\alpha} \e^{-2y_n } + q^{n \alpha} q^n \l(\sum_{k=\lceil y_nq^{-n}\rceil}^\infty k^{\alpha}\e^{-2kq^n }-\sum_{k=\lceil y_nq^{-n}\rceil+1}^\infty k^{\alpha}\e^{-2kq^n }\r) \notag\\
&\leq C q^n y_n^{\alpha} \e^{-2y_n }+ q^{n \alpha} q^n \lceil y_n q^{-n} \rceil^{\alpha}\e^{-2q^n\lceil y_n q^{-n}\rceil } \notag\\
&\leq Cq^n y_n^\alpha\e^{-2y_n },\notag
\end{align}
which is $o\l(y_n^\alpha\e^{-2y_n }\r)$, as desired.
Therefore, we have established that for every positive sequence $y_n\to+\infty$,
\begin{equation}\label{eq:erste_Asymptotik}
q^n \sum_{\substack{x\in q^n \N \\ x\geq y_n}} x^\alpha \e^{-2x } \sim
\INT{y_n}{\infty} x^\alpha \e^{-2x }\D x
\quad \textrm{as }n\to\infty.
\end{equation}

\vspace*{2mm}
\noindent
\emph{Second term on the right-hand side of \eqref{eq:split_uctone}.} Once again we split up the sum:
\begin{multline}\label{eq:splituctoneone}
q^n \sum_{\substack{x\in q^n\N \\ x\geq y_n}}
x^{\alpha}\l|(1-q^n)^{2xq^{-n}} -\e^{-2x }\r| \\
=
q^n\sum_{\substack{x\in q^n\N \\ y_n\leq x\leq q^{-n/2}}}
x^{\alpha}\e^{-2x }\l|(1-q^n)^{2xq^{-n}}\e^{2x }-1\r|
+
q^n\sum_{\substack{x\in q^n\N \\ x > q^{-n/2}}}
x^{\alpha}\l|(1-q^n)^{2xq^{-n}}-\e^{-2x }\r|.
\end{multline}
The first term on the right hand side of \eqref{eq:splituctoneone} can be estimated using \eqref{eq:erste_Asymptotik} and Lemma \ref{lemma:uniformconvergenceexponentialfunction} as follows:
\begin{align*}
\lefteqn{q^n\sum_{\substack{x\in q^n\N \\ y_n\leq x\leq q^{-n/2}}}
x^{\alpha}\e^{-2x }\l|(1-q^n)^{2xq^{-n}}\e^{2x }-1\r|}\\
&\leq
\sup_{x\in [0,q^{-n/2}]}\l|(1-q^n)^{2xq^{-n}}\e^{2x }-1\r|
\cdot \Bigg(q^n
\sum_{\substack{x\in q^n\N \\ y_n\leq x\leq q^{-n/2}}}
x^{\alpha}\e^{-2x } \Bigg)
\\
&\leq
o(1)
\cdot \INT{y_n}{\infty} x^{\alpha}\e^{-2x }\D x
=
o\l(y_n^\alpha\e^{-2 y_n}\r),
\end{align*}
where we used that according to Lemma \ref{lemma:uniformconvergenceexponentialfunction},
\begin{equation*}
\lim_{n\to\infty}\sup_{x\in [0,q^{-n/2}]}\l|(1-q^n)^{2xq^{-n}}\e^{2x }-1\r|
=0.
\end{equation*}
The second term on the right-hand side of \eqref{eq:splituctoneone} can be estimated in the following way: using the inequality $(1-q^n)^{2x q^{-n}} \leq \e^{-2x}$, we obtain
\begin{multline*}
q^n\sum_{\substack{x\in q^n\N \\ x > q^{-n/2}}}
x^{\alpha}\l|(1-q^n)^{2xq^{-n}}-\e^{-2x}\r|
\leq q^n\sum_{\substack{x\in q^n\N \\ x\geq q^{-n/2}}}
x^{\alpha}\l((1-q^n)^{2xq^{-n}}+\e^{-2x}\r) \\
\leq 2 q^n \sum_{\substack{x\in q^n\N \\ x\geq q^{-n/2}}}
x^{\alpha}\e^{-2x }
\leq
C\INT{q^{-n/2}}{\infty} x^\alpha \e^{-2x }\D x,
\end{multline*}
which is easily seen to be $o(y_n^{\alpha} \e^{-2 y_n})$ by Lemma~\ref{lemma:asympt_incompl_gamma} and the assumption $y_n = o(q^{-n/2})$.
Summarizing, we have established \eqref{eq:proofwithoutL}.
\end{proof}

\begin{lemma}\label{lemma:asymptotik_Fnk}
Fix $q\in (0,1)$.
\begin{itemize}
\item[(i)] Let $\delta_0>0$ be arbitrary. Then, there is $C>0$ such that
\begin{equation*}
\tilde{F}_{n,k}=\sum_{j=k}^\infty \frac{(1-q^n)^{2j}c^2_j}{v(1-q^n)} \leq
C \e^{-kq^n}
\end{equation*}
for all $k\geq\delta_0 nq^{-n}$ and all $n\in\N$.
\item[(ii)] Let $(y_n)_{n\in \N}$ be a sequence of positive real numbers such that $y_n\to +\infty$ but $y_n = o(q^{-n/2})$ as $n\to\infty$.
For every $\epsilon>0$ there exists a constant $c>0$  such that
\begin{equation*}
\tilde{F}_{n,\lfloor y_nq^{-n}\rfloor}=\sum_{j=\lfloor y_n q^{-n}\rfloor}^\infty \frac{(1-q^n)^{2j}c^2_j}{v(1-q^n )} \geq
c q^{2 n\eps} y_n^{\gamma-1-\eps}\e^{-2 y_n}
\end{equation*}
for all $n\in\N$.
\end{itemize}
\end{lemma}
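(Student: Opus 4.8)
The plan is to write $\tilde F_{n,k}=S_{n,k}/v(1-q^n)$, where $S_{n,k}:=\sum_{j=k}^\infty (1-q^n)^{2j}c_j^2$, and to estimate the numerator and denominator separately, reducing the tail sum $S_{n,k}$ in both parts to the incomplete Gamma integral of Lemmas~\ref{lemma:asympt_incompl_gamma} and~\ref{lemma:riemsumm_asymptotik}. For the denominator, Theorem~\ref{theo:abel} (applied with $a=q^n$ and $z=1$) supplies constants $0<c_1<C_1$ and $n_0\in\N$ with $c_1 q^{-n\gamma}L(q^{-n})\leq v(1-q^n)\leq C_1 q^{-n\gamma}L(q^{-n})$ for $n\geq n_0$. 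For the numerator the main tool is Potter's bound~\cite[Theorem~1.5.6]{bingham_book}: after fixing $\eps>0$ it provides $n_0$ such that $c_1(jq^n)^{-\eps}\leq L(j)/L(q^{-n})\leq C_1(jq^n)^{\eps}$ whenever $j\geq q^{-n}$ and $n\geq n_0$ (the restriction $j\geq q^{-n}$, i.e.\ $jq^n\geq 1$, is what makes the $\max/\min$ in Potter's estimate collapse to a single power). Together with~\eqref{eq:c_k_reg_var} and the identity $j^{\gamma-1}=q^{-n(\gamma-1)}(jq^n)^{\gamma-1}$ this yields, for all $j\geq q^{-n}$ and $n\geq n_0$,
\[
c_1\, q^{-n(\gamma-1)}L(q^{-n})(jq^n)^{\gamma-1-\eps}\ \leq\ c_j^2\ \leq\ C_1\, q^{-n(\gamma-1)}L(q^{-n})(jq^n)^{\gamma-1+\eps}.
\]

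For part~(i): since $k\geq\delta_0 nq^{-n}\geq q^{-n}$ for $n$ large, the upper estimate above applies to every $j\geq k$; substituting $x=jq^n\in q^n\N$ and using $(1-q^n)^{2j}\leq\e^{-2jq^n}$ gives $S_{n,k}\leq C q^{-n(\gamma-1)}L(q^{-n})\sum_{x\in q^n\N,\,x\geq kq^n}x^{\gamma-1+\eps}\e^{-2x}$. As $x\mapsto x^{\gamma-1+\eps}\e^{-2x}$ is eventually decreasing and $kq^n\geq\delta_0 n\to\infty$, multiplying the sum by $q^n$ bounds it by $\int_{kq^n-q^n}^\infty x^{\gamma-1+\eps}\e^{-2x}\D x$, which by Lemma~\ref{lemma:asympt_incompl_gamma} (up to a harmless shift by $q^n$) is at most $C(kq^n)^{\gamma-1+\eps}\e^{-2kq^n}$. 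Dividing by the lower bound for $v(1-q^n)$ yields $\tilde F_{n,k}\leq C(kq^n)^{\gamma-1+\eps}\e^{-2kq^n}$, and since $kq^n$ stays in $[\delta_0 n_0,\infty)$ the quantity $(kq^n)^{\gamma-1+\eps}\e^{-kq^n}$ is bounded, whence $\tilde F_{n,k}\leq C\e^{-kq^n}$ for $n\geq n_0$. For the finitely many $n<n_0$ the slow variation of $L$ gives $c_j^2\leq C_n(1-q^n)^{-j}$ for all $j$, hence $\tilde F_{n,k}\leq C_n(1-q^n)^k\leq C_n\e^{-kq^n}$; enlarging $C$ absorbs these cases.

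For part~(ii): with the given $\eps>0$ put $k_n:=\lfloor y_nq^{-n}\rfloor$; as $y_n\to\infty$ we have $k_n\geq q^{-n}$ for $n$ large, so the lower estimate for $c_j^2$ holds for all $j\geq k_n$. Substituting $x=jq^n$ and discarding the nonnegative terms with $q^nk_n\leq x<y_n$ (note $q^nk_n\leq y_n$) gives
\[
S_{n,k_n}\ \geq\ c\, q^{-n(\gamma-1)}L(q^{-n})\sum_{x\in q^n\N,\,x\geq y_n}x^{\gamma-1-\eps}(1-q^n)^{2xq^{-n}}.
\]
By Lemma~\ref{lemma:riemsumm_asymptotik} with $\alpha=\gamma-1-\eps$ (its hypotheses $y_n\to\infty$ and $y_n=o(q^{-n/2})$ are exactly the assumptions here), followed by Lemma~\ref{lemma:asympt_incompl_gamma}, multiplying this sum by $q^n$ leaves something $\geq c\, y_n^{\gamma-1-\eps}\e^{-2y_n}$ for $n$ large; hence $S_{n,k_n}\geq c\, q^{-n\gamma}L(q^{-n})y_n^{\gamma-1-\eps}\e^{-2y_n}$. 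Dividing by the upper bound for $v(1-q^n)$ gives $\tilde F_{n,k_n}\geq c\, y_n^{\gamma-1-\eps}\e^{-2y_n}\geq c\, q^{2n\eps}y_n^{\gamma-1-\eps}\e^{-2y_n}$ for $n\geq n_0$ (slightly stronger than claimed, since $q<1$); for the remaining finitely many $n$ the number $\tilde F_{n,k_n}$ is strictly positive (recall $c_j\neq 0$ for all large $j$), so shrinking $c$ suffices.

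The computations above are routine bookkeeping. The one genuinely delicate point — and the reason for the restrictions $j\geq q^{-n}$, $kq^n\geq\delta_0 n\to\infty$, and $y_n\to\infty$ — is to make Potter's estimate uniform over the entire range of summation indices and to render Lemmas~\ref{lemma:asympt_incompl_gamma} and~\ref{lemma:riemsumm_asymptotik} (as well as the Riemann-sum/integral comparison) applicable; once this is in place, the powers of $q^n$ and the slowly varying factors $L(q^{-n})$ cancel between numerator and denominator and the stated bounds drop out.
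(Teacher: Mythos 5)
Your proof is correct and follows essentially the same route as the paper's: rewrite $\tilde F_{n,k}$ using \eqref{eq:c_k_reg_var}, control $v(1-q^n)$ via Theorem~\ref{theo:abel}, pass to a Riemann sum over $q^n\N$, and invoke Lemmas~\ref{lemma:riemsumm_asymptotik} and~\ref{lemma:asympt_incompl_gamma}. The one substantive refinement is your use of Potter's bound in the form $c_1(jq^n)^{-\eps}\leq L(j)/L(q^{-n})\leq C_1(jq^n)^{\eps}$ (exploiting $j\geq q^{-n}$ to collapse the $\max$), whereas the paper bounds $L(xq^{-n})$ and $L(q^{-n})$ separately by $t^{\pm\eps}$, picking up an extra factor $q^{\mp 2n\eps}$. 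Your version therefore establishes the stronger conclusion $\tilde F_{n,\lfloor y_nq^{-n}\rfloor}\geq c\,y_n^{\gamma-1-\eps}\e^{-2y_n}$ in part~(ii), while the paper's $q^{2n\eps}$-weakened statement is exactly what its cruder slowly-varying estimate produces. You also explicitly patch in the finitely many small $n$ (which the paper silently sweeps under ``for sufficiently large $n$'' despite the lemma claiming all $n\in\N$) — a minor but welcome bit of rigor.
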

\begin{proof}
We start with general considerations that will be needed both for (i) and (ii).
Using \eqref{eq:c_k_reg_var}, we obtain that for all $k\in\N_0$, $n\in\N$,
\begin{align}
\label{eq:upbound_Fnktilde}
\begin{split}
\tilde{F}_{n,k}&=\sum_{j=k}^\infty\frac{(1-q^n)^{2j}c^2_j}{v(1-q^n)} \\
&=\frac{1}{\Gamma(\gamma)}\frac{1}{v(1-q^n)}\sum_{j=k}^\infty j^{\gamma-1}L(j) (1-q^n)^{2j} \\
&=\frac{1}{\Gamma(\gamma)}\frac{1}{v(1-q^n)}\sum_{\substack{x\in q^n\N \\ x\geq kq^n}} \l(\frac{x}{q^n}\r)^{\gamma-1}L\l(\frac{x}{q^n}\r) (1-q^n)^{2x q^{-n}} \\
&=\frac{L(q^{-n})}{\Gamma(\gamma)}\frac{q^{-n\gamma}}{v(1-q^n)}\sum_{\substack{x\in q^n\N \\ x\geq kq^n}} q^n x^{\gamma-1}\frac{L(xq^{-n})}{L(q^{-n})} (1-q^n)^{2 x q^{-n}}.
 \end{split}
\end{align}
From Theorem \ref{theo:abel} it follows that for some $C>0$ and all sufficiently large $n\in\N$,
\begin{equation}\label{eq:abel_appli}
\frac{1}{\Gamma(\gamma)}\frac{q^{-n\gamma}L(q^{-n})}{v(1-q^n)} \in [1/C, C].
\end{equation}
Indeed, the sequence on the left-hand side converges to $2^\gamma /\Gamma(\gamma)$, as $n\to\infty$. Fix $\epsilon>0$.
Since $L$ is slowly varying, for all sufficiently large $t$ we have the estimate
\begin{equation}\label{eq:L_slowly_var}
t^{-\eps}\leq L(t)\leq  t^\eps.
\end{equation}

\vspace*{2mm}
\noindent
\emph{Proof of (i).} Since for all $\alpha>0$ and $x>0$,
$\l(1-\frac{x}{\alpha}\r)^\alpha \leq \e^{-x}$,
we have
\begin{multline*}
q^n\sum_{\substack{x\in q^n\N\\ x\geq kq^n}} \frac{L(xq^{-n})}{L(q^{-n})} x^{\gamma-1}(1-q^n)^{2xq^{-n}}
\leq  q^{n(1-2\eps)}\sum_{\substack{x\in q^n\N\\ x\geq kq^n}} x^{\gamma-1+\eps}(1-q^n)^{2xq^{-n}} \\
\leq  q^{n(1-2\eps)}\sum_{\substack{x\in q^n\N\\ x\geq kq^n}} x^{\gamma-1+\eps}\e^{-2x}
\leq C q^{-2n\eps}\INT{kq^n}{\infty} x^{\gamma-1+\eps}\e^{-2x }\D x,
\end{multline*}
	where we used \eqref{eq:erste_Asymptotik} in the last step.
	Using Lemma~\ref{lemma:asympt_incompl_gamma}, we arrive at
	\begin{equation}\label{eq:proof_upperboundi}
	Cq^{-2n\eps}\INT{kq^{n}}{\infty} x^{\gamma-1+\eps} \e^{-2x }\D x
\leq Cq^{-2n\eps} (kq^n)^{\gamma-1+\eps}\e^{-2kq^n } \leq C\e^{-kq^n }, \\
	\end{equation}
where we have chosen $\eps>0$ sufficiently small and used the inequality $kq^n\geq \delta_0 n$.

\vspace*{2mm}
\noindent
\emph{Proof of (ii).}
Let $(y_n)_{n\in\N}$ be a positive sequence such that $y_n\to +\infty$ and $y_n = o(q^{-n/2})$, as $n\to\infty$.
By~\eqref{eq:L_slowly_var} and Lemma \ref{lemma:riemsumm_asymptotik} we have
\begin{multline*}
q^n\sum_{\substack{x\in q^n\N\\ x\geq y_n}} \frac{L(xq^{-n})}{L(q^{-n})} x^{\gamma-1}(1-q^n)^{2xq^{-n}}
\geq  q^{n(1+2\eps)}\sum_{\substack{x\in q^n\N\\ x\geq y_n}} x^{\gamma-1-\eps}(1-q^n)^{2xq^{-n}}\\
\geq c q^{2n\eps}\INT{y_n}{\infty} x^{\gamma-1-\eps}\e^{-2x }\D x
\geq cq^{2n\eps} y_n^{\gamma-1-\eps}\e^{-2y_n },
\end{multline*}
where in the last step we employed Lemma~\ref{lemma:asympt_incompl_gamma}.
Setting $k=\lfloor y_nq^{-n} \rfloor$ in \eqref{eq:upbound_Fnktilde} and using \eqref{eq:abel_appli} together with the above estimate yields (ii).
\end{proof}

%
%
%
%
%
%
%

\subsection{Lemmas on the monotone rearrangement}\label{sec:rearrangement}
Recall that $(b_{n,k}^2)_{k\in\N_0}$ is the monotone non-increasing rearrangement of the sequence $(a_{n,k}^2)_{k\in\N_0}$ that was defined in~\eqref{eq:def_ank} as follows:
$$
a_{n,k}^2 = \frac{c_k^2 (1-q^n)^{2k}}{v(1-q^n)}.
$$
The properties of $b_{n,k}^2$ are difficult to access directly. In this section we collect several lemmas on $b_{n,k}^2$.   We start by providing an estimate for the maximal term $b_{n,0}^2$.

\begin{lemma}\label{lemma:ankmaximum}
There exists $n_0\in\N$ such that
\begin{equation*}
b_{n,0}^2 = \max_{k\in\N_0} a^2_{n,k}\leq q^{n/2 \cdot (\gamma \wedge 1)} \quad \textrm{for all }n\geq n_0.
\end{equation*}
\end{lemma}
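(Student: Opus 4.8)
The plan is to estimate $a_{n,k}^2=c_k^2(1-q^n)^{2k}/v(1-q^n)$ (see~\eqref{eq:def_ank}) uniformly over $k$, treating numerator and denominator separately. Since $c_0=0$ we may assume $k\geq1$; fix once and for all a small $\eps>0$ with $\eps<\min\{\gamma/2,1/4\}$.

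\emph{Numerator.} By~\eqref{eq:c_k_reg_var} and the slow variation of $L$ (which gives $L(k)\leq k^{\eps}$ for all large $k$, the finitely many remaining values of $k$ being absorbed into a constant) there is $C>0$ with $c_k^2\leq C k^{\gamma-1+\eps}$ for all $k\geq1$. Together with the elementary inequality $(1-q^n)^{2k}\leq\e^{-2kq^n}$ this yields $c_k^2(1-q^n)^{2k}\leq C k^{\gamma-1+\eps}\e^{-2kq^n}$. The function $x\mapsto x^{\gamma-1+\eps}\e^{-2xq^n}$, $x>0$, is unimodal: it is decreasing if $\gamma-1+\eps\leq0$, and otherwise attains its maximum at $x=(\gamma-1+\eps)/(2q^n)$, which exceeds $1$ once $n$ is large. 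Evaluating this maximum gives, for all large $n$,
\[
\sup_{k\geq1} c_k^2(1-q^n)^{2k}\leq C\max\{1,\,q^{-n(\gamma-1+\eps)}\}.
\]

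\emph{Denominator.} Theorem~\ref{theo:abel} applied with $z=1$ and $a=q^n\downarrow0$ gives $v(1-q^n)\sim(2q^n)^{-\gamma}L(q^{-n})$, hence $v(1-q^n)\geq c\,q^{-n\gamma}L(q^{-n})$ for large $n$; invoking the slow variation of $L$ once more in the form $L(q^{-n})\geq q^{n\eps}$ we obtain $v(1-q^n)\geq c\,q^{-n(\gamma-\eps)}$. Dividing the two bounds, for every $k$ and all large $n$,
\[
a_{n,k}^2\leq C\,q^{n(\gamma-\eps)}\max\{1,\,q^{-n(\gamma-1+\eps)}\}=C\max\{q^{n(\gamma-\eps)},\,q^{n(1-2\eps)}\}.
\]
By the choice of $\eps$ one has $\gamma-\eps>(\gamma\wedge1)/2$ and $1-2\eps>1/2\geq(\gamma\wedge1)/2$, so, since $q<1$, each term on the right is at most $q^{n(\gamma\wedge1)/2}$ for all $n\geq n_0$; this is the assertion.

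The only genuinely delicate point is the uniformity in $k$: one must perform the maximization of $k^{\gamma-1+\eps}\e^{-2kq^n}$ with a constant not depending on $n$ — legitimate only for $n$ large, when the unconstrained maximizer is at least $1$ — and control the slowly varying factor $L$ through Potter-type bounds rather than through the non-uniform Abelian asymptotics. The rest is routine bookkeeping.
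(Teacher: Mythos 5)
Your proof is correct and follows essentially the same strategy as the paper's: bound $c_k^2 \leq Ck^{\gamma-1+\eps}$ via regular variation, lower-bound $v(1-q^n)\geq c\,q^{-n(\gamma-\eps)}$ via Theorem~\ref{theo:abel}, and control the unimodal factor $k^{\gamma-1+\eps}(1-q^n)^{2k}$ by locating its maximizer near $\text{const}\cdot q^{-n}$, distinguishing the cases $\gamma-1+\eps>0$ and $\gamma-1+\eps\leq0$. The only cosmetic deviations are that you pass to $\e^{-2kq^n}$ before maximizing (the paper maximizes $x^\alpha(1-q^n)^{2x}$ directly) and you dispense with $k=0$ via the convention $c_0=0$ rather than a separate estimate; neither changes the substance.
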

\begin{proof}
Fix any $0 < \eps < \min (\gamma/2, 1/4)$. By the regular variation property~\eqref{eq:c_k_reg_var}, there is $C>0$ such that
\begin{equation}\label{eq:est_ck}
c_k^2\leq  C k^{\gamma-1+\eps}\quad \textrm{for all } k\in \N.
\end{equation}
The subsequent estimates hold for all sufficiently large $n\in\N$. From Theorem~\ref{theo:abel} we deduce that for some sufficiently small $c>0$,
\begin{equation}\label{eq:est_v}
v(1-q^n) \geq c q^{-n(\gamma-\eps)}.
\end{equation}

Suppose first that $\alpha:= \gamma-1+\eps > 0$.  The maximum of the function $x\mapsto x^{\alpha} (1-q^n)^{2x}$ over $x>0$ is attained at $x_0=-\frac{\alpha}{2\log (1-q^n)} \sim \frac 12 \alpha q^{-n}$ and hence is bounded from above by $C q^{-\alpha n}$.
Using the definition of $a_{n,k}$  and then~\eqref{eq:est_ck} and \eqref{eq:est_v}, we obtain that for all  $k\in\N$,
\begin{align*}
a_{n,k}^2 = \frac{c_k^2 (1-q^n)^{2k}}{v(1-q^n)}   \leq \frac{C k^{\gamma-1+\eps}(1-q^n)^{2k}}{c q^{-n(\gamma-\eps)}}
\leq C q^{-n(\gamma-1+\eps)}q^{n(\gamma-\eps)}=Cq^{n(1-2\eps)} \leq q^{n/2},
\end{align*}
where in the last step we used that $\eps <1/4$. If $\gamma-1+\eps \leq  0$, then~\eqref{eq:est_ck} and \eqref{eq:est_v} yield
\begin{align*}
a_{n,k}^2 = \frac{c_k^2 (1-q^n)^{2k}}{v(1-q^n)}   \leq \frac{C k^{\gamma-1+\eps}(1-q^n)^{2k}}{c q^{-n(\gamma-\eps)}}
\leq C q^{n(\gamma-\eps)} \leq q^{\gamma n/2},
\end{align*}
where in the last step we used that $\eps < \gamma/2$. Finally, for all $\gamma>0$ and $k=0$ we have
$$
a_{n,0}^2 = \frac{c_0^2}{v(1-q^n)}  \leq C q^{n(\gamma-\eps)} \leq q^{\gamma n/2},
$$
thus completing the proof.
\end{proof}
To understand the next lemma, it is useful to consider the simple example in which $c_j^2 = j^{\gamma-1}$ for all $j\in\N_0$, where $\gamma > 1$. It is easy to check that the sequence $(a_{n,j}^2)_{j\in\N_0}$ is unimodal and the maximum is attained at $j_{n,max} \sim (\gamma-1) q^{-n}$. It is natural to conjecture that all terms $a_{n,j}^2$ with $j$ close to the modus should be larger than all terms with $j$ larger than, say $n^{\delta} q^{-n}$, where $\delta>0$. The next lemma makes this precise under the general regular variation condition~\eqref{eq:c_k_reg_var}.
\begin{lemma}\label{lemma:est_bnk}
	Let $0\leq\alpha<\delta<1$ and $(a_{n,k})_{n\in\N,k\in\N_0}$ be defined in \eqref{eq:def_ank}.
	There exist a constant $\zeta>0$ and a number $n_0\in \N$ such that
	\begin{equation*}
	\min_{\lfloor \zeta q^{-n}\rfloor \leq j\leq \lfloor \zeta q^{-n} \rfloor + \lfloor n^\alpha q^{-n}\rfloor + k}
	a^2_{n,j}\geq
	\max_{j\geq \lfloor n^\delta q^{-n}\rfloor +k} a^2_{n,j} \quad \textrm{for }n\geq n_0, k\in \N_0.
	\end{equation*}
\end{lemma}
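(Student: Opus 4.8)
The plan is to reduce the statement about order statistics to a pointwise comparison of the numbers $a_{n,j}^2 = c_j^2(1-q^n)^{2j}/v(1-q^n)$ (recall~\eqref{eq:def_ank}). Since the factor $v(1-q^n)$ is the same for every $j$, it suffices to show that
$$
c_j^2(1-q^n)^{2j}\ \geq\ c_{j'}^2(1-q^n)^{2j'}
$$
whenever $j$ lies in the left range $\lfloor\zeta q^{-n}\rfloor\leq j\leq \lfloor\zeta q^{-n}\rfloor+\lfloor n^\alpha q^{-n}\rfloor+k$ and $j'$ lies in the right range $j'\geq \lfloor n^\delta q^{-n}\rfloor+k$; for $n$ large one then automatically has $j'>j$. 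Since $(1-q^n)^{-1}\geq \e^{q^n}$ (because $-\log(1-x)\geq x$), it is enough to verify the inequality
$$
\frac{c_{j'}^2}{c_j^2}\ \leq\ (1-q^n)^{-2(j'-j)},\qquad\text{for which it suffices that}\qquad \frac{c_{j'}^2}{c_j^2}\ \leq\ \e^{2(j'-j)q^n}.
$$

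To estimate the left-hand side I would invoke Potter's bound~\cite[Theorem~1.5.6 on p.~25]{bingham_book} applied to the regularly varying function $x\mapsto x^{\gamma-1}L(x)$ of index $\gamma-1$: fixing some $\eps>0$, there exist $A\geq 1$ and $X\in\N$ such that $c_{j'}^2/c_j^2\leq A(j'/j)^{\gamma-1+\eps}\leq A(j'/j)^{\beta}$ for all $j'\geq j\geq X$, where $\beta:=\max\{\gamma-1+\eps,0\}\geq 0$ and the last inequality uses $j'/j\geq 1$. This is where $\zeta$ must be calibrated: I set $\zeta:=\max\{1,2\beta\}$. For $n\geq n_0$ we have $j\geq j_1:=\lfloor\zeta q^{-n}\rfloor\geq \tfrac\zeta2 q^{-n}\geq X$, so Potter's bound applies, and using $\log(j'/j)=\log(1+(j'-j)/j)\leq (j'-j)/j\leq (j'-j)/j_1\leq \tfrac2\zeta(j'-j)q^n$ we obtain
$$
\log\frac{c_{j'}^2}{c_j^2}\ \leq\ \log A+\beta\log\frac{j'}{j}\ \leq\ \log A+\frac{2\beta}{\zeta}(j'-j)q^n\ \leq\ \log A+(j'-j)q^n.
$$

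It remains to absorb the additive constant $\log A$ into the exponent. Put $M:=\lfloor n^\delta q^{-n}\rfloor+k$ for the left endpoint of the right range and $j_2:=\lfloor\zeta q^{-n}\rfloor+\lfloor n^\alpha q^{-n}\rfloor+k$ for the right endpoint of the left range; then $j'-j\geq M-j_2=\lfloor n^\delta q^{-n}\rfloor-\lfloor\zeta q^{-n}\rfloor-\lfloor n^\alpha q^{-n}\rfloor$, and here the term $k$ cancels. Since $\alpha<\delta$ this difference is at least $\tfrac12 n^\delta q^{-n}$ for all $n\geq n_0$, so $(j'-j)q^n\geq\tfrac12 n^\delta\to\infty$; hence $\log A\leq (j'-j)q^n$ for $n\geq n_0$, which gives $\log(c_{j'}^2/c_j^2)\leq 2(j'-j)q^n$, i.e. the desired bound. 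The inequality $c_j^2(1-q^n)^{2j}\geq c_{j'}^2(1-q^n)^{2j'}$ follows, and taking the minimum over $j$ in the left range and the (attained) supremum over $j'$ in the right range yields the lemma. The one real subtlety is the uniformity in $k$, and it is exactly the cancellation of $k$ in $M-j_2$ that makes it work: it keeps the gap between the two index ranges of order $n^\delta q^{-n}$ no matter how large $k$ is, while the ratio $j'/j$ is controlled against this gap precisely by the choice $\zeta\geq 2\beta$. Everything else is routine bookkeeping with Potter's bound and elementary inequalities; I do not anticipate further obstacles.
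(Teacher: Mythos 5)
Your proof is correct. The two arguments use the same essential ingredients --- Potter's bound for the slowly varying factor and the observation that the gap between the two index ranges is at least of order $n^\delta q^{-n}$ uniformly in $k$ (the $k$'s cancel in the difference) --- but are organized differently, and it is worth noting what each buys. The paper works directly with the ratio $u_{n,k}/o_{n,k}$ of the two sides, and the role of $\zeta$ there is to make the deterministic sequence $\bigl(i^{\gamma-1+\eps}(1-q^n)^{2i}\bigr)_{i\geq \lfloor\zeta q^{-n}\rfloor}$ monotone decreasing; once that is in place, the $\min$ and the $\max$ are attained at the explicit extreme indices, the floors and the factor $k$ cancel, and Lemma~\ref{lemma:uniformconvergenceexponentialfunction} converts $(1-q^n)^{-2(\lfloor n^\delta q^{-n}\rfloor-\lfloor n^\alpha q^{-n}\rfloor-\lfloor\zeta q^{-n}\rfloor)}$ into $\e^{2(n^\delta-n^\alpha-\zeta)(1+o(1))}$, so the ratio diverges. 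You instead reduce to a pointwise comparison for arbitrary $j$ in the left block and $j'$ in the right block, replace $(1-q^n)^{-2(j'-j)}$ by $\e^{2(j'-j)q^n}$ via $-\log(1-x)\geq x$, and use $\log(j'/j)\leq (j'-j)/j$; here $\zeta$ is calibrated so that the exponent $\beta$ from Potter satisfies $2\beta/\zeta\leq 1$, which lets the geometric decay absorb the polynomial growth without any monotonicity discussion, and the residual constant $\log A$ is killed by the divergence of the gap times $q^n$. Your version is somewhat more elementary (no monotonicity lemma, no need to locate the extremizers) and makes the role of $\zeta$ more transparent as a rescaling that guarantees $jq^n \gtrsim 2\beta$ on the left range; the paper's version is a touch more structural and reuses its Lemma~\ref{lemma:uniformconvergenceexponentialfunction}. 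Both are fine; one small stylistic point is that the final ``attained supremum'' remark is unnecessary --- since your bound holds for every $j'$ in the right range, it holds for the supremum whether or not it is attained.
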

\begin{proof}
	By \eqref{eq:c_k_reg_var} we have
	\begin{equation*}
	a^2_{n,j}=\frac{L(j)j^{\gamma-1}(1-q^n)^{2j}}{\Gamma(\gamma)v(1-q^n)} \quad
	\textrm{for all } n \in\N, j\in\N_0.
	\end{equation*}
	Choose sufficiently large $\zeta>0$ such that $
	\l(i^{\gamma}(1-q^n)^{2i}\r)_{i\geq \lfloor \zeta q^{-n}\rfloor}$
	is a monotone decreasing sequence for every $n\in\N$. To see that such $\zeta$ exists, one takes a quotient of two subsequent terms of the sequence and shows, by Taylor expansion,  that it is $>1$ for sufficiently large $\zeta$.

For $k\in\N_0$ consider the sequences
\begin{align*}
u_{n,k}&:=\min_{\lfloor \zeta q^{-n} \rfloor \leq i\leq \lfloor \zeta q^{-n} \rfloor + \lfloor n^\alpha q^{-n}\rfloor + k}\frac{L(i)i^{\gamma-1}(1-q^n)^{2i}}{\Gamma(\gamma)v(1-q^n)},\\
o_{n,k}&:=\max_{j\geq \lfloor n^\delta q^{-n}\rfloor + k}\frac{L(j)j^{\gamma-1}(1-q^n)^{2j}}{\Gamma(\gamma)v(1-q^n)}.
\end{align*}
By the Potter bound~\cite[Theorem~1.5.6 on p.~25]{bingham_book}, for every $0<\eps<1$ and $C>1$ there exists a number $n_1\in\N_0$ such that
\begin{equation}\label{eq:usingpottersbound}
\frac{L(i)}{L(j)}\geq \frac{1}{C}\min \l\{\l(\frac{i}{j}\r)^{\eps},\l(\frac{j}{i}\r)^{\eps}\r\} \quad \textrm{for all } i,j\geq \zeta q^{- n_1}.
\end{equation}
Let $n$ be sufficiently large. Using~\eqref{eq:usingpottersbound} for $j>i$ and that the sequence $
	\l(i^{\gamma-1+\eps}(1-q^n)^{2i}\r)_{i\geq \lfloor \zeta q^{-n}\rfloor}$  is also decreasing, we obtain
\begin{align*}
\frac{u_{n,k}}{o_{n,k}}
&= \min_{\lfloor \zeta q^{-n} \rfloor \leq i \leq \lfloor \zeta q^{-n} \rfloor + \lfloor n^\alpha q^{-n}\rfloor + k}
\min_{j\geq \lfloor n^\delta q^{-n}\rfloor + k}
\frac{L(i)i^{\gamma-1}(1-q^n)^{2i}}{L(j)j^{\gamma-1}(1-q^n)^{2j}}\\
&\geq c
\min_{\lfloor \zeta q^{-n} \rfloor \leq i\leq \lfloor \zeta q^{-n} \rfloor + \lfloor n^\alpha q^{-n}\rfloor + k}
\min_{j\geq \lfloor n^\delta q^{-n}\rfloor + k}
\l(\frac{i}{j}\r)^{\gamma-1+\eps} \frac{(1-q^n)^{2i}}{(1-q^n)^{2j}} \\
&=c \l(\frac{\lfloor \zeta q^{-n}\rfloor + \lfloor n^\alpha q^{-n}\rfloor + k}{\lfloor n^\delta q^{-n}\rfloor + k}\r)^{\gamma-1+\eps}
(1-q^n)^{2 \lfloor \zeta q^{-n}\rfloor  + 2 \lfloor n^\alpha q^{-n}\rfloor   - 2 \lfloor n^\delta q^{-n}\rfloor}.
\end{align*}
Applying Lemma \ref{lemma:uniformconvergenceexponentialfunction} to the second factor and the estimate
$$
n^{\alpha-\delta} \leq \frac{\lfloor \zeta q^{-n}\rfloor + \lfloor n^\alpha q^{-n}\rfloor + k}{\lfloor n^\delta q^{-n}\rfloor + k} \leq 2
$$
to the first one, we obtain
\begin{align*}
\frac{u_{n,k}}{o_{n,k}} \geq c
n^{-C}
\e^{2(n^{\delta}-n^{\alpha}-\zeta)}
\geq c	\e^{n^{\delta}-n^{\alpha}} \longrightarrow \infty
\end{align*}
as $n\to\infty$ since $0\leq\alpha<\delta<1$ by our assumption. Observe that the above estimates are uniform in $k\in\N_0$. This implies the statement of Lemma~\ref{lemma:est_bnk}.
\end{proof}

Next we derive two corollaries of Lemma~\ref{lemma:est_bnk}.
\begin{lemma}\label{lemma:est_bnk_cor1}
Let $0<\delta <1$. Then, there exist $\zeta>0$ and $n_0\in\N$ such that
\begin{equation*}
b^2_{n,k-1+\lfloor n^{\delta}q^{-n}\rfloor}\leq a^2_{n, \lfloor \zeta q^{-n} \rfloor + k} \quad \textrm{for all } n\geq n_0, k\in\N_0.
\end{equation*}
\end{lemma}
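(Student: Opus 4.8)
The plan is to deduce this corollary directly from Lemma~\ref{lemma:est_bnk}, using the elementary characterization of the monotone rearrangement: for a nonnegative sequence $(a_{n,j}^2)_{j\geq 0}$ tending to $0$ with descending rearrangement $(b_{n,m}^2)_{m\geq 0}$, one has $b_{n,m}^2 \leq t$ if and only if at most $m$ of the terms $a_{n,j}^2$ are strictly greater than $t$ (indeed, the rearrangement preserves the multiset of values, so the number of terms exceeding $t$ is the same for $(a_{n,j}^2)$ and $(b_{n,m}^2)$, and the latter being nonincreasing means those are exactly the first few entries).

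First I would apply Lemma~\ref{lemma:est_bnk} with $\alpha=0$, which is admissible since $0=\alpha<\delta<1$. This yields a constant $\zeta>0$ and some $n_0\in\N$ such that
\begin{equation*}
\min_{\lfloor \zeta q^{-n}\rfloor \leq j \leq \lfloor \zeta q^{-n}\rfloor + \lfloor q^{-n}\rfloor + k} a_{n,j}^2 \;\geq\; \max_{j \geq \lfloor n^\delta q^{-n}\rfloor + k} a_{n,j}^2
\end{equation*}
for all $n\geq n_0$ and $k\in\N_0$. Write $j_0:=\lfloor \zeta q^{-n}\rfloor + k$. Since $j_0$ lies in the index range of the left-hand minimum (because $\lfloor q^{-n}\rfloor\geq 0$), we get $a_{n,j_0}^2 \geq a_{n,j}^2$ for every $j\geq \lfloor n^\delta q^{-n}\rfloor + k$; equivalently, every index $j$ with $a_{n,j}^2 > a_{n,j_0}^2$ must satisfy $j \leq \lfloor n^\delta q^{-n}\rfloor + k - 1$. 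Enlarging $n_0$ if necessary (uniformly in $k$, since $k$ cancels), we may also assume $n^\delta > \zeta + 1$, hence $\lfloor n^\delta q^{-n}\rfloor \geq \lfloor \zeta q^{-n}\rfloor + 1$, so that $j_0 = \lfloor \zeta q^{-n}\rfloor + k \leq \lfloor n^\delta q^{-n}\rfloor + k - 1$. Since $a_{n,j_0}^2$ is not strictly greater than itself, the index $j_0$ is excluded from the set $\{\,j : a_{n,j}^2 > a_{n,j_0}^2\,\}$, which is therefore contained in $\{0,1,\dots,\lfloor n^\delta q^{-n}\rfloor + k - 1\}\setminus\{j_0\}$, a set of cardinality $\lfloor n^\delta q^{-n}\rfloor + k - 1$. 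Applying the characterization above with $t=a_{n,j_0}^2$ and $m=\lfloor n^\delta q^{-n}\rfloor + k - 1$ gives
\begin{equation*}
b_{n,\,k-1+\lfloor n^\delta q^{-n}\rfloor}^2 \;\leq\; a_{n,\,\lfloor \zeta q^{-n}\rfloor + k}^2,
\end{equation*}
which is the assertion.

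The only subtle point is the off-by-one bookkeeping: one must notice that the threshold index $j_0$ itself sits inside the block $\{0,\dots,\lfloor n^\delta q^{-n}\rfloor + k - 1\}$ yet does not contribute to the count of strictly larger terms, which is exactly what turns the naive bound $\lfloor n^\delta q^{-n}\rfloor + k$ into the required $\lfloor n^\delta q^{-n}\rfloor + k - 1$. Everything else is a direct transcription, and all the estimates are uniform in $k$ because $k$ enters additively on both sides of each inequality borrowed from Lemma~\ref{lemma:est_bnk}.
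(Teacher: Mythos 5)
Your proof is correct and follows essentially the same route as the paper: apply Lemma~\ref{lemma:est_bnk} with $\alpha=0$, then translate the resulting domination into a cardinality bound on the set of indices $j$ with $a_{n,j}^2 > a_{n,\lfloor \zeta q^{-n}\rfloor+k}^2$, and read off the conclusion via the elementary characterization of the nonincreasing rearrangement. The only difference is cosmetic: the paper invokes Lemma~\ref{lemma:est_bnk} with the softened parameter $\delta/2$ and argues by contradiction, gaining the needed ``one-index slack'' from the gap $\lfloor n^{\delta}q^{-n}\rfloor - \lfloor n^{\delta/2}q^{-n}\rfloor$, whereas you keep the same $\delta$ and argue directly by observing that the threshold index $j_0=\lfloor\zeta q^{-n}\rfloor+k$ lies inside the block $\{0,\dots,\lfloor n^{\delta}q^{-n}\rfloor+k-1\}$ yet cannot itself belong to the set $\{j:a_{n,j}^2>a_{n,j_0}^2\}$. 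Your bookkeeping is marginally sharper, but it is the same idea.
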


\begin{proof}
Lemma~\ref{lemma:est_bnk} with $\alpha=0$ and $\delta/2$ instead of $\delta$ yields a value $n_0\in\N$ such that for all $n\geq n_0$ and $k\in\N_0$,
\begin{equation}\label{eq:qqq1}
a_{n, \lfloor\zeta q^{-n}\rfloor + k} \geq \min_{\lfloor \zeta q^{-n}\rfloor \leq j \leq  \lfloor \zeta q^{-n}\rfloor + \lfloor q^{-n}\rfloor   + k} a_{n,j}^2\geq \max_{j\geq \lfloor n^{\delta/2} q^{-n}\rfloor + k} a_{n,j}^2.
\end{equation}
Suppose, by contraposition, that there exists $n\geq n_0$ and $k\in\N_0$ such that
\begin{equation}\label{eq:falsche_annahme}
b^2_{n,k-1+\lfloor n^{\delta}q^{-n}\rfloor }>a^2_{n, \lfloor \zeta q^{-n}\rfloor + k}.
\end{equation}
Consider the set
\begin{equation*}
A_{n,k}:=\{j\in\N_0 \colon a_{n,j}^2 > a^2_{n,\lfloor \zeta q^{-n}\rfloor + k}\}.
\end{equation*}
Since the sequence $(b^2_{n,j})_{j\in\N_0}$ is the monotone non-increasing rearrangement of $(a_{n,j}^2)_{j\in\N_0}$, it follows from \eqref{eq:falsche_annahme}
that the cardinality of $A_{n,k}$ satisfies
\begin{equation*}
\# A_{n,k}\geq k+\lfloor n^{\delta}q^{-n}\rfloor.
\end{equation*}
On the other hand, \eqref{eq:qqq1} implies that no index $j\geq \lfloor n^{\delta/2} q^{-n}\rfloor + k$ can be contained in the set $A_{n,k}$. This is a contradiction.
\end{proof}
Recall that $F_{n,k}\leq \tilde F_{n,k}$ by definition. The next lemma provides a converse inequality.
\begin{lemma}\label{lemma:est_bnk_cor2}
For every $0< \eps < 1$  there exists $n_0\in\N$ such that
	\begin{equation*}
	F_{n,k}\geq \tilde{F}_{n,k+ \lfloor n^\eps q^{-n}\rfloor} \quad \textrm{for all } n\geq n_0, k\in\N_0.
	\end{equation*}
\end{lemma}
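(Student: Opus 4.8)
The plan is to reduce the inequality between tails to a comparison of partial sums, and then to combine Lemma~\ref{lemma:est_bnk} with elementary counting. Since $\sum_{j=0}^\infty a_{n,j}^2 = \sum_{j=0}^\infty b_{n,j}^2 = 1$, setting $m = m(n,k) := k + \lfloor n^\eps q^{-n}\rfloor$, the claim $F_{n,k}\geq\tilde F_{n,m}$ is equivalent to
\[
b_{n,0}^2 + \dots + b_{n,k-1}^2 \;\leq\; a_{n,0}^2 + \dots + a_{n,m-1}^2 .
\]
The case $k=0$ is trivial because $\tilde F_{n,\lfloor n^\eps q^{-n}\rfloor}\leq\tilde F_{n,0}=1=F_{n,0}$, so I assume $k\geq 1$. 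I would fix a bijection $\sigma\colon\N_0\to\N_0$ realising the monotone rearrangement, i.e.\ $b_{n,i}^2 = a_{n,\sigma(i)}^2$, and put $T := \{\sigma(0),\dots,\sigma(k-1)\}$; this is a set of $k$ indices carrying the $k$ largest weights, so that $b_{n,0}^2 + \dots + b_{n,k-1}^2 = \sum_{j\in T} a_{n,j}^2$.

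Next I would apply Lemma~\ref{lemma:est_bnk} with $\alpha = 0$ and with $\delta$ replaced by a fixed $\delta'\in(0,\eps)$. This yields $\zeta>0$ and $n_1$ such that, for all $n\geq n_1$ and all $k\in\N_0$, every weight $a_{n,j}^2$ with $j\geq\lfloor n^{\delta'}q^{-n}\rfloor + k$ is at most $\mu_n := \min_{i\in I_n} a_{n,i}^2$, where $I_n := \{\,i\in\N_0\colon \lfloor\zeta q^{-n}\rfloor\leq i\leq \lfloor\zeta q^{-n}\rfloor + \lfloor q^{-n}\rfloor + k\,\}$ is a block of $\lfloor q^{-n}\rfloor + k + 1$ consecutive indices. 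Because the summand $k$ appears on both sides, I would then check that for all $n$ beyond some $n_0$, uniformly in $k$, one has simultaneously $I_n\subseteq\{0,1,\dots,m-1\}$ and $\lfloor n^{\delta'}q^{-n}\rfloor + k\leq m$; indeed $\zeta+1<n^{\delta'}<n^\eps$ for large $n$.

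The conclusion then follows from the decomposition
\[
\sum_{j=0}^{m-1} a_{n,j}^2 - \sum_{j\in T} a_{n,j}^2 \;=\; \sum_{j\in\{0,\dots,m-1\}\setminus T} a_{n,j}^2 \;-\; \sum_{j\in T,\,j\geq m} a_{n,j}^2 .
\]
For the first sum on the right I keep only the indices lying in $I_n$ (legitimate since $I_n\subseteq\{0,\dots,m-1\}$), obtaining the lower bound $|I_n\setminus T|\,\mu_n$; for the second sum, each summand is at most $\mu_n$ by the previous paragraph, giving the upper bound $|\{j\in T\colon j\geq m\}|\,\mu_n$. Finally, since $I_n\subseteq\{0,\dots,m-1\}$ one has $|I_n\cap T|\leq|T\cap\{0,\dots,m-1\}| = k - |\{j\in T\colon j\geq m\}|$, hence
\[
|I_n\setminus T| = |I_n| - |I_n\cap T| \geq (\lfloor q^{-n}\rfloor + k + 1) - k + |\{j\in T\colon j\geq m\}| \geq |\{j\in T\colon j\geq m\}| ,
\]
so the displayed difference is nonnegative, which is exactly the desired partial-sum inequality.

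I expect the only genuine subtlety to be the uniformity in $k$ of all the estimates: one must verify that the block $I_n$ can be fitted inside $\{0,\dots,m-1\}$ for every $k$ at once — which works precisely because $k$ occurs additively and cancels — and that $I_n$, of length $\lfloor q^{-n}\rfloor + k + 1 > k$, is long enough to compensate for however many large-index members $T$ may contain; this is exactly the reason Lemma~\ref{lemma:est_bnk} is formulated with the extra $+k$ in the range of the minimum. Beyond Lemma~\ref{lemma:est_bnk} and the normalisation $\sum_j a_{n,j}^2 = 1$, no further input is needed.
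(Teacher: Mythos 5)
Your proof is correct and follows essentially the same route as the paper: both apply Lemma~\ref{lemma:est_bnk} with $\alpha=0$ (the paper takes $\delta=\eps$ directly, which also works, so your auxiliary $\delta'<\eps$ is unnecessary but harmless) and exploit that the block $I_n$ of $\lfloor q^{-n}\rfloor + k + 1 > k$ indices dominates every index $\geq k+\lfloor n^\eps q^{-n}\rfloor$. Your explicit counting of $|I_n\setminus T|$ against $|\{j\in T: j\geq m\}|$ is simply a more careful, tie-proof spelling-out of the paper's informal statement that those terms ``are not among the $k$ largest.''
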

\begin{proof}
Using Lemma \ref{lemma:est_bnk} with $\alpha=0$ and $\delta = \eps$ we find $\zeta>0$ and $n_0\in\N$ such that
\begin{equation}\label{eq:est_bnk_appli_rep}
\min_{\lfloor \zeta q^{-n} \rfloor \leq j\leq \lfloor\zeta q^{-n}\rfloor + \lfloor q^{-n}\rfloor + k}  a^2_{n,j} \geq \max_{j\geq \lfloor n^{\eps}q^{-n}\rfloor + k} a^2_{n,j} \quad \textrm{for all }n\geq n_0, k\in\N_0.
\end{equation}
Recall that $F_{n,k}$ can be obtained from the sum  $\sum_{j=0}^\infty a_{n,j}^2$ by excluding the $k$ largest summands.
But \eqref{eq:est_bnk_appli_rep} means that the terms $a_{n,j}^2$ with $j\geq k+ \lfloor n^\eps q^{-n}\rfloor$ are not among the $k$ largest terms and hence are too small to be taken
out of the sum defining $F_{n,k}$. Therefore, all terms $a_{n,j}^2$ with $j\geq k+ \lfloor n^\eps q^{-n}\rfloor$ are included in $F_{n,k}$. Since the sum of these terms is nothing but $\tilde{F}_{n,k+ \lfloor n^\eps q^{-n}\rfloor}$, the proof is complete.
\end{proof}

\bibliographystyle{plainnat}
\bibliography{random_taylor}
\end{document}